\documentclass[final,1p,times,fleqn]{elsarticle}

\usepackage[utf8]{inputenc}
\usepackage{amsmath,amssymb,amsfonts,mathtools,bm}
\usepackage{amsthm}
\usepackage{booktabs}
\usepackage{multirow}
\usepackage{float}
\usepackage{graphicx}
\usepackage{subcaption}
\usepackage{enumitem}
\usepackage{comment}
\usepackage{setspace}
\usepackage[colorlinks=true,linkcolor=black,citecolor=blue,urlcolor=blue]{hyperref}
\usepackage{microtype}

\hypersetup{
  pdftitle={A High-Order Geometric-Compatible DGSEM with Subcell Limiting for the GPR System},
  pdfauthor={Jielin Yang}
}
\journal{Journal of Computational Physics}
\biboptions{numbers,sort&compress}
\newcommand{\A}{\mathbf A}
\newcommand{\vct}{\mathbf v}
\newcommand{\m}{\mathbf m}
\newcommand{\w}{\mathbf w}
\newcommand{\alphaA}{\bm{\alpha}}
\newcommand{\Jv}{\mathbf J}
\newcommand{\betaJ}{\bm{\beta}_J}
\newcommand{\dd}{\,\mathrm d}
\newcommand{\jump}[1]{\left[\!\left[#1\right]\!\right]}

\newcommand{\tr}{\operatorname{tr}}
\newcommand{\dev}{\operatorname{dev}}
\newcommand{\cof}{\operatorname{cof}}
\newcommand{\tcross}{\mathbin{\times_{\!T}}}
\newcommand{\blanksubfigure}[2][0.31\textwidth]{%
  \begin{subfigure}[t]{#1}
    \centering
    \fbox{\rule{0pt}{0.14\textheight}\rule{0.92\linewidth}{0pt}}
    \caption{#2}
  \end{subfigure}}
\newcommand{\plotsubfigure}[3][0.31\textwidth]{%
  \begin{subfigure}[t]{#1}
    \centering
    \includegraphics[width=\linewidth]{#2}
    \caption{#3}
  \end{subfigure}}

\newtheorem{theorem}{Theorem}[section]
\newtheorem{lemma}[theorem]{Lemma}
\newtheoremstyle{boldremark}
  {6pt}{6pt}{\normalfont}{}{\bfseries}{.}{0.5em}
  {\thmname{#1}\thmnumber{ #2}\thmnote{ \textbf{(#3)}}}
\theoremstyle{boldremark}
\newtheorem{remark}{Remark}[section]

\begin{document}
\begin{frontmatter}

\title{A High-Order Geometric-Compatible DGSEM with Subcell Limiting
for the GPR System}

\author[nd]{Jielin Yang\corref{cor1}}
\cortext[cor1]{Corresponding author.}
\ead{jyang38@nd.edu}
\affiliation[nd]{organization={Department of Applied and Computational
Mathematics and Statistics (ACMS), University of Notre Dame},
city={Notre Dame}, postcode={46556}, state={Indiana}, country={USA}}

\begin{abstract}
We construct an equation-of-state-independent geometric-compatible
finite-volume (GC-FV) method and its high-order geometric-compatible nodal
discontinuous Galerkin spectral-element extension (GC-DG) for a reduced
Godunov--Peshkov--Romenski system on fixed Eulerian meshes.  Density is recovered from
$\rho=\rho_0\det\A$, so mass conservation is a hidden companion law of the
distortion equation.  Starting from a straight-path
Dal Maso--LeFloch--Murat (DLM) fluctuation pair,
minimum-change cofactor projections produce a single-valued density flux
and exact induced density Lax--Friedrichs (LF) dissipation without jump quotients.
The summation-by-parts property lifts this two-state identity to high order
and yields a subcell representation of GC-DG.  We then blend GC-DG
with the compatible GC-FV operator on the same Legendre--Gauss--Lobatto
(LGL) subcells.  Because both
operators induce the same boundary density flux, the global subcell blend
retains the exact semidiscrete mass balance while suppressing shock-generated
oscillations.  A complementary modal/face-jump indicator provides the
elementwise blending coefficient.  The method accommodates fluid, elastic,
viscous, and elastoplastic closures, with or without thermal impulse.
\end{abstract}

\begin{keyword}
GPR equations \sep geometric compatibility \sep discontinuous Galerkin
spectral element method \sep summation-by-parts \sep subcell limiter
\sep shock indicator
\end{keyword}

\end{frontmatter}

\section{Introduction}
\label{sec:introduction}

The Godunov--Peshkov--Romenski (GPR) equations provide a first-order
hyperbolic description of continuum mechanics in which distortion,
thermal impulse, and relaxation connect inviscid and viscous fluids with
elastic, viscoelastic, and plastic solids
\cite{PeshkovRomenski2016,Dumbser2016,Peshkov2018,Jackson2017,
JacksonNikiforakis2019,PeshkovEtAl2019}.  Their applications include
compressible flow, elastic waves, and dynamic rupture
\cite{Dumbser2016,Tavelli2020,Gabriel2021}.  The unified model is
numerically demanding: it combines nonconservative products, shocks,
material-dependent stiff sources, and constraints among its state
variables.

High-order GPR computations have been advanced particularly by
path-conservative ADER finite-volume and DG methods, often coupled to
WENO reconstruction, adaptive meshes, and a posteriori subcell limiting
\cite{Dumbser2016,Jackson2017,Tavelli2020}.  Structure preservation is
an additional challenge, richer than for Euler and not exhausted by the
divergence constraint familiar from MHD.  Thermodynamic compatibility is
one such objective: the discrete primary equations should imply the
appropriate thermodynamic companion balance.  Compatible FV and
high-order DG formulations have been developed through entropy-primary
updates, path-integral fluxes, and residual corrections
\cite{Busto2022,BustoDumbser2022DG,Abgrall2023}.  Other structural
objectives include the determinant--density relation
\cite{Boscheri2024,Boscheri2026},
curl-type involutions
\cite{DumbserChiocchettiPeshkov2020,Abgrall2025,Dumbser2026},
and physical admissibility, including positivity checks near shocks
\cite{Tavelli2020}; a posteriori subcell limiting is one common
stabilization mechanism \cite{DumbserSubcell2014}.  Satisfying one does not
automatically secure the others.

Among these structural objectives, we are particularly interested in
geometric-compatible discretizations of the GPR equations.  We call a scheme
\emph{geometric-compatible} if its discrete
distortion equation induces the conservative determinant law and
thereby maintains $\rho=\rho_0\det\A$ semidiscretely, when the relation
holds initially, without an independently evolved density.  This nonlinear
algebraic constraint can be treated more directly in a Lagrangian
description, where material motion, cell-volume evolution, and deformation
kinematics are tied to the same moving mesh.  Compatible cell-centered
Lagrangian discretizations have long exploited this connection in
hydrodynamics and nonlinear elasticity
\cite{Bauer2006,Maire2009,KluthDespres2010,BoscheriLagrangian2022}.
Most recently, Boscheri et al.~\cite{Boscheri2026} extended this viewpoint
to a structure-preserving GPR scheme on moving unstructured meshes.

On fixed Eulerian meshes, by contrast, exact preservation of the GPR
determinant--density companion law has received very little attention; to
the best of our knowledge, the only construction known to us that
explicitly targets this property is the first-order FV method of Boscheri
et al.~\cite{Boscheri2024}.  This reduced fixed-grid strategy removes the
independent mass equation, recovers density from $\det\A$, and
corrects the distortion update so that it induces a conservative density
equation.  The nonlinear determinant constraint is thereby recast as a
hidden discrete conservation law.  That construction also addresses the
thermodynamic companion balance, but its geometric component is the point
of departure here.  It is first order in space and uses jump-quotient
corrections together with Rusanov-type dissipation interpreted through a
vanishing-viscosity regularization, which obscures a direct high-order
extension.

The central idea in the construction of Boscheri et
al.~\cite{Boscheri2024} is not to evolve the density equation independently,
but to regard it as a companion law induced by the remaining reduced
equations and to preserve this implication at the discrete level.  This
strategy closely parallels the earlier entropy-conservative construction of
Tadmor, in which the entropy equation is likewise treated as a companion law
of the primary conservation laws.  Early work of
Osher~\cite{Osher1984} characterized entropy-admissible numerical
fluxes, and Osher and Tadmor~\cite{OsherTadmor1988} analyzed the
interplay of numerical viscosity, accuracy, and entropy conditions.
Tadmor's construction~\cite{Tadmor1987} makes the nondissipative part
explicit.  In one dimension, for a conserved state $u$, entropy pair
$(\eta,q)$, and physical flux $f$, define the entropy variables
$V=\partial_u\eta$ and potential $\psi=V\cdot f-q$.
A consistent two-point flux $F^{EC}$ is selected to satisfy
$(V_R-V_L)\cdot F^{EC}=\psi_R-\psi_L$.
This finite-jump identity turns the desired discrete entropy balance
into an algebraic equation for the flux; additional symmetry or
constitutive ansatzes can select an explicit solution
\cite{Chandrashekar2013}.  Entropy-stable interface dissipation is
added after the equality is established.
In both constructions, the discrete primary equations reproduce a hidden
companion conservation law just as at the continuous level: entropy
conservation in Tadmor's setting and mass conservation in ours.

Extending Tadmor's two-point construction to high-order entropy-stable
DG methods has been an important development, and its mechanism is
equally central here.  In the nodal discontinuous Galerkin spectral element
method (DGSEM), Gauss--Lobatto collocation gives
a diagonal quadrature norm and a differentiation matrix whose
summation-by-parts (SBP) identity discretely reproduces integration
by parts.  Writing the volume operator as a sum of symmetric
two-point fluxes lets their pair identities cancel over interior
node pairs; the remaining terms are boundary contributions closed by
the interface flux.  Thus the companion-law proof does not require
exact integration of nonlinear volume fluxes
\cite{FisherCarpenter2013,CarpenterFisherNielsenFrankel2014,
GassnerWintersKopriva2016,ChenShu2017,Chan2018}.
Castro et al.~\cite{Castro2013} provide the entropy framework for
nonconservative systems, while Renac~\cite{Renac2019} develops its
DGSEM/SBP implementation.  In the present setting, the same SBP
telescoping mechanism lifts the geometric-compatible FV pair to a
high-order DGSEM whose distortion equation induces a high-order discrete
density equation.

Nonlinear hyperbolic systems may develop discontinuous solutions, so
stabilization is needed to suppress numerical oscillations.  Classical
choices include TVD/TVB limiters
\cite{CockburnShu2001,Krivodonova2004} and oscillation-eliminating (OE)
limiters \cite{PengSunWu2025,YangFuOEESDG}.  When applied directly to the
distortion polynomial, however, most such limiters can destroy geometric
compatibility because the determinant is nonlinear.

Subcell stabilization offers a more natural route.  A posteriori methods
replace a troubled DG polynomial by a robust FV evolution on a subgrid
\cite{DumbserSubcell2014}, whereas the hybrid construction of Hennemann et
al.~\cite{Hennemann2021} rewrites an SBP operator in an equivalent
finite-volume form and blends it with a compatible low-order subcell
scheme using one coefficient per element.  Subsequent developments allow
subcell-local blending, invariant-domain or entropy constraints, and more
localized dissipation
\cite{Pazner2021,RuedaRamirezPaznerGassner2022,
RuedaRamirezBolmKuzminGassner2024,LinChan2024,Vilar2025,Zhao2026}.
Rueda-Ram\'irez and Gassner~\cite{RuedaRamirezGassner2024} further showed
that nonconservative SBP discretizations admit such a flux-difference
representation when the nonconservative product separates into local and
symmetric two-point factors.  This framework fits the present setting
naturally: the geometric-compatible finite-volume (GC-FV) and
geometric-compatible discontinuous Galerkin (GC-DG) operators satisfy the
same hidden density law, which is
therefore inherited by their convex residual blend.

A central practical question is how to determine the blending coefficient.
The Persson--Peraire sensor measures the energy in the highest polynomial
modes \cite{PerssonPeraire2006}, and Hennemann et
al.~\cite{Hennemann2021} modified this modal indicator to drive one
coefficient per element.  Later subcell methods have considered
feature-based or constraint-driven coefficients, derivative-based
smoothness detectors, and two-level strategies that combine an element
modal sensor with oscillation measures at internal subcell interfaces
\cite{Pazner2021,RuedaRamirezPaznerGassner2022,LinChan2024,
Vilar2025,Zhao2026}.  Nevertheless, a simple elementwise indicator that
simultaneously detects unresolved modes inside an element and jumps across
element faces remains less developed.  These two signals provide
complementary information, and we therefore construct the global blending
coefficient from the union of an element-modal channel and a
scale-normalized face-jump channel.

The principal contributions are as follows.  First, we retain the central
fixed-grid idea of Boscheri et al.~\cite{Boscheri2024}---namely, treating the
density equation as a companion law induced by the reduced GPR system---but
develop a distinct construction inspired by Tadmor's entropy-conservative
methodology.  A two-state companion identity and a bounded projection define
the low-order \textbf{geometric-compatible finite-volume (GC-FV)} scheme,
which is then extended through the SBP property of nodal DGSEM to the
high-order \textbf{geometric-compatible discontinuous Galerkin (GC-DG)}
scheme.  Second, we rewrite GC-DG in an equivalent subcell form and use this
representation to construct a corresponding geometric-compatible global
subcell limiter that blends GC-DG with GC-FV while retaining the hidden
density law.  Third, we propose a new indicator for this limiter by combining
complementary element-modal and face-jump information.  The resulting method
is assessed for smooth convergence, general constitutive laws, nonlinear-elastic
and thermal-wave Riemann problems, and genuinely
multidimensional configurations.

The remainder of this paper is organized as follows.
Section~\ref{sec:model} introduces the complete and reduced GPR systems and
specifies the constitutive assumptions used in this work.
Section~\ref{sec:fv-compatible} constructs the low-order GC-FV scheme and
proves its determinant companion law.
Section~\ref{sec:high-order} develops the high-order GC-DG scheme through
the SBP construction and derives its equivalent subcell form.
Section~\ref{sec:gpr-subcell} introduces the geometric-compatible global
subcell limiter and the complementary modal/face-jump indicator.
Section~\ref{sec:time} presents the time discretization and quantifies its
fully discrete mass drift.
Numerical results are reported in Section~\ref{sec:numerics}.
Finally, Section~\ref{sec:conclusions} summarizes the conclusions and
outlines directions for future work.  The acoustic-tensor derivation and
supporting algebraic proofs are collected in the appendices.

\section{The GPR model and constitutive scope}
\label{sec:model}

\subsection{Complete energy-primary GPR system}

The starting point of this paper is the complete GPR system written with
total energy as a primary variable.  In three space dimensions its state is
\begin{equation}
 Q_E=(\rho,\m,\mathcal E,\A,\Jv)^T,
 \label{eq:full-GPR-energy-state}
\end{equation}
where $\rho>0$ is mass density, $\m=\rho\vct$ is momentum density,
$\mathcal E$ is total-energy density, $\A=(A_{ik})$ is the distortion, and
$\Jv=(J_k)$ is the thermal impulse.  The constitutive closure is
inherited from the usual entropy-primary GPR potential
$\mathcal E=\mathcal E(\rho,\m,S,\A,\Jv)$, with $S=\rho s$ the
physical entropy density.  For matrices of the same size, the colon denotes
the Frobenius product
$X:Y=\sum_{i,j}X_{ij}Y_{ij}=\operatorname{tr}(X^TY)$, with
$\|X\|_F^2=X:X$.  Its conjugate variables are defined by
\begin{equation}
 \dd\mathcal E
 =r\,\dd\rho+\vct\cdot\dd\m+T\,\dd S
  +\alphaA:\dd\A+\betaJ\cdot\dd\Jv,
 \qquad
 (r,\vct,T,\alphaA,\betaJ)
 =\partial_{(\rho,\m,S,\A,\Jv)}\mathcal E.
 \label{eq:full-GPR-Gibbs}
\end{equation}
Following standard GPR terminology
\cite{PeshkovRomenski2016,Dumbser2016,Boscheri2024}, the tuple in
\eqref{eq:full-GPR-Gibbs} consists of the \emph{thermodynamic dual
variables} (also called the main-field or Godunov variables); as energy
gradients, they are the thermodynamic forces associated with the state
variables.  More precisely, $r$ is the mass-density dual variable,
$\vct$ is the momentum dual variable and hence the material velocity,
$T>0$ is the entropy dual variable and hence the temperature,
$\alphaA=\partial_{\A}\mathcal E$ is the distortion dual variable (the
elastic or strain thermodynamic force), and
$\betaJ=\partial_{\Jv}\mathcal E$ is the thermal-impulse dual
variable (the thermal thermodynamic force).  Repeated indices below are
summed over $i,k,m\in\{1,2,3\}$, $x_k$ is the $k$-th Cartesian coordinate,
and $\delta_{ik}$ is the Kronecker symbol.  Define the shear and thermal
stress tensors by
\begin{equation}
 \sigma_{ik}=A_{ji}\alphaA_{jk},
 \qquad
 \phi_{ik}=J_i\beta_{J,k}.
 \label{eq:full-GPR-stresses}
\end{equation}
We use the GPR flux-stress convention of
\eqref{eq:full-GPR-stresses}: because the momentum flux contains
$pI_3+\bm\sigma+\bm\phi$, the corresponding Cauchy stress tensor is
$-(pI_3+\bm\sigma+\bm\phi)$.  In particular, $\bm\sigma$ has the opposite
sign from the deviatoric Cauchy-stress contribution.
Without optional artificial second-order regularization, the complete
energy-primary first-order GPR equations are
\cite{Dumbser2016,Boscheri2024}
\begin{subequations}
\label{eq:full-GPR-energy-system}
\begin{align}
 \partial_t\rho+\partial_{x_k}(\rho v_k)&=0,
 \label{eq:full-GPR-mass}\\
 \partial_t(\rho v_i)+\partial_{x_k}
 \bigl(\rho v_iv_k+p\delta_{ik}+\sigma_{ik}+\phi_{ik}\bigr)&=0,
 \label{eq:full-GPR-momentum}\\
 \partial_t\mathcal E
 +\partial_{x_k}\!\left[
 \mathcal E v_k+v_i(p\delta_{ik}+\sigma_{ik}+\phi_{ik})
 +T\beta_{J,k}\right]&=0,
 \label{eq:full-GPR-energy}\\
 \partial_tA_{ik}+\partial_{x_k}(A_{im}v_m)
 +v_m(\partial_{x_m}A_{ik}-\partial_{x_k}A_{im})
 &=-\frac{\alphaA_{ik}}{\theta_1},
 \label{eq:full-GPR-A}\\
 \partial_tJ_k+\partial_{x_k}(J_mv_m+T)
 +v_m(\partial_{x_m}J_k-\partial_{x_k}J_m)
 &=-\frac{\beta_{J,k}}{\theta_2}.
 \label{eq:full-GPR-J}
\end{align}
\end{subequations}
Here $p$ is the constitutive pressure and $\theta_1,\theta_2>0$ are the
strain- and thermal-relaxation coefficients.

\subsection{Constitutive assumptions}

Set
\begin{equation}
 g(\A)=\det\A,\qquad \w=\partial_{\A}g=\cof\A,
 \qquad S_A=-\frac{\alphaA}{\theta_1},\qquad
 \w:S_A=0.
 \label{eq:isochoric-source}
\end{equation}
Here $\cof\A$ denotes the cofactor matrix of $\A$.
The last identity is the only constitutive condition required by the
geometric construction.  It states that distortion relaxation is
isochoric and therefore does not enter the determinant companion law.
The pressure, temperature, stresses, and relaxation times may otherwise
come from any smooth admissible GPR closure.

The standard GPR closure separates the specific total energy into
microscopic, mesoscopic, and macroscopic contributions
\cite{PeshkovRomenski2016,Dumbser2016,PeshkovEtAl2019}:
\begin{equation}
 \frac{\mathcal E}{\rho}
 =e_h(\rho,s)+e_s(\rho,\A)+e_J(\rho,\Jv)
 +\frac12|\vct|^2.
 \label{eq:GPR-energy-decomposition}
\end{equation}
The hydrodynamic or volumetric energy $e_h$ supplies the caloric equation
of state (EOS),
whereas $e_s$ and $e_J$ describe, respectively, mesoscopic shear
deformation and the thermal impulse.  Thus these three contributions are
separate parts of the energy rather than additional variables required to
define the hydrodynamic EOS.  With
\begin{align}
 G&=\A^T\A,\quad
 D=\mathring G=\dev G=G-\tfrac13\tr(G)I_3,
 \quad \Psi=D:D,
 \label{eq:GPR-shear-invariants}\\
 e_s(\rho,\A)&=\tfrac14 b^2(\rho)\Psi,
 &e_J(\rho,\Jv)&=\tfrac12c_h^2|\Jv|^2,
 \label{eq:standard-mesoscopic-energy}\\
 \alphaA&=\rho b^2(\rho)\A D,
 &\betaJ&=\rho c_h^2\Jv.
 \label{eq:J-thermal-closure}
\end{align}
Here $b(\rho)\ge0$ and $c_h\ge0$ are the shear- and heat-wave speeds.  The
constant-shear closure used in the ideal/stiffened-gas examples is
$b(\rho)=c_s$.  In
particular, the corresponding shear-energy density is
\begin{equation}
 \mathcal E_s=\rho e_s
 =\tfrac14\rho c_s^2\mathring G:\mathring G.
 \label{eq:J-shear}
\end{equation}
The pressure and temperature follow from the same potential,
\begin{equation}
 p=\rho^2\partial_\rho(e_h+e_s+e_J)\big|_{s,\A,\Jv},
 \qquad
 T=\partial_s(e_h+e_s+e_J)\big|_{\rho,\A,\Jv}.
 \label{eq:GPR-pressure-temperature}
\end{equation}
Consequently, density-dependent shear or heat-wave speeds contribute to
the pressure; for constant $c_s$ and $c_h$, the pressure is supplied by
$e_h$ alone.

The hydrodynamic energies and EOS used in the numerical examples are as
follows.

\paragraph{Ideal and stiffened gases}
For a stiffened gas,
\begin{equation}
 e_h(\rho,T)=c_vT+\frac{p_\infty}{\rho},
 \qquad
 p+p_\infty=(\gamma-1)\rho c_vT,
 \label{eq:stiffened-gas-EOS}
\end{equation}
or, equivalently,
\begin{equation}
 p=(\gamma-1)\rho e_h-\gamma p_\infty.
 \label{eq:stiffened-gas-caloric}
\end{equation}
Here $c_v>0$ is the specific heat at constant volume,
$\gamma=c_p/c_v>1$ is the heat-capacity ratio, and $p_\infty\ge0$ is the
stiffening pressure.  The ideal-gas EOS is recovered by setting
$p_\infty=0$ \cite{Dumbser2016,Jackson2017,PeshkovEtAl2019}.

\paragraph{Barton--Romenski energy}
For the nonlinear elastic solid tests, the Barton--Romenski specific
internal energy used in nonlinear-elastic Riemann solvers and Eulerian
large-deformation calculations
\cite{BartonEtAl2009,Barton2010,Zhang2017,PeshkovEtAl2019}
can be written in the GPR decomposition as
\begin{align}
 e_h^{\rm BR}(\rho,s)
 &=\frac{K_0}{2\alpha_B^2}(\nu^{\alpha_B}-1)^2
 +c_vT_0\nu^{\gamma_B}\bigl(e^{s/c_v}-1\bigr),
 \label{eq:Barton-volumetric-energy}\\
 e_s^{\rm BR}(\rho,\A)
 &=\frac14b_0^2\nu^{\beta_B}\Psi,
 \qquad b^2(\rho)=b_0^2\nu^{\beta_B}.
 \label{eq:Barton-shear-energy}
\end{align}
Here and below, $\nu:=\rho/\rho_0$ is the compression ratio relative to
the reference density $\rho_0$.  Equivalently, this is the isotropic
hyperelastic energy expressed through
the invariants $I_1=\tr G$, $I_2=\tfrac12[(\tr G)^2-\tr(G^2)]$, and
$I_3=\det G=\nu^2$ on the geometric constraint
$\rho=\rho_0\det\A$, using
$I_1^2/3-I_2=\Psi/2$
\cite{BartonEtAl2009,Barton2010,Zhang2017}.  Here
$K_0=c_{L,0}^2-4b_0^2/3$ is the reference bulk-wave-speed squared,
$c_{L,0}$ and $b_0$ are the reference longitudinal and shear-wave speeds,
$T_0$ is the reference temperature, and
$\alpha_B,\beta_B,\gamma_B$ control the nonlinear density dependence of
the cold, shear, and thermal contributions, respectively
\cite{BartonEtAl2009,Barton2010}.  This closure gives
$T=T_0\nu^{\gamma_B}e^{s/c_v}$ and is the constitutive form used in the
five-wave and cylindrical-solid tests below
\cite{Zhang2017,JacksonNikiforakis2019}.

\begin{remark}[EOS scope]
\label{rem:model-EOS-scope}
The geometric-compatible construction below is not restricted to the two
closures displayed above; it only requires a smooth admissible GPR energy
and the isochoric identity \eqref{eq:isochoric-source}.  Other standard GPR
choices include the Mie--Gr\"uneisen hydrodynamic EOS widely used for
shock-loaded metals \cite{PeshkovEtAl2019,JacksonNikiforakis2019} and more
general Godunov--Romenski nonlinear hyperelastic energies
\cite{BartonEtAl2009,Barton2010}, including damage-dependent variants used
in GPR models of brittle fracture \cite{Tavelli2020,Gabriel2021}.  Such a closure can be inserted without
altering the companion projection, although its dissipation coefficient must be
obtained from the corresponding acoustic tensor or full normal Jacobian.
\end{remark}

The associated distortion source used in the computations is
\begin{equation}
 S_A=-\frac{3}{\tau_1}g^{5/3}\A D,
 \qquad g=\det\A>0,
 \label{eq:standard-A-relaxation}
\end{equation}
which satisfies \eqref{eq:isochoric-source}.  With the energy-density
convention in \eqref{eq:GPR-energy-decomposition}--\eqref{eq:J-thermal-closure},
comparison with $S_A=-\alphaA/\theta_1$ in
\eqref{eq:full-GPR-A} gives
\begin{equation}
 \theta_1=\frac{\rho\tau_1b^2(\rho)}{3g^{5/3}}.
 \label{eq:standard-theta1}
\end{equation}
This relation also fixes the density factor that can differ when the GPR
energy is written per unit mass rather than per unit volume.  The limits
$c_s=0$ and $\tau_1=\infty$ are interpreted as $S_A=0$.  When heat conduction is
active, the standard relaxation coefficient is
\begin{equation}
 \theta_2=\rho z_2\tau_2c_h^2,\qquad
 z_2=\frac{\rho_0T_0}{\rho T}.
 \label{eq:J-thermal-relaxation}
\end{equation}
An admissible state has finite constitutive quantities, $\det\A>0$, and
positive density and temperature.  The geometric-compatible construction below does not rely on a
particular caloric EOS or on convexity of a thermodynamic entropy.

\subsection{Fixed planar 2D--3C convention}

All results use a two-dimensional mesh but retain the three-dimensional GPR
constitutive law, as in standard GPR computations
\cite{Dumbser2016,Jackson2017,Boscheri2024}.  We impose
\begin{equation}
 \partial_zQ_E=0,\qquad v_3=0,\qquad J_3=0,\qquad
 \A=\begin{pmatrix}A_\parallel&0\\0&a_3\end{pmatrix}
 \in GL^+(3),
 \qquad a_3=A_{33}>0.
 \label{eq:J-planar-A}
\end{equation}
Here \emph{2D--3C} means two spatial coordinates but three-component
constitutive vectors and \(3\times3\) distortion tensors subject to the
block restriction in \eqref{eq:J-planar-A}.
The retained full-system state is
\begin{equation}
 Q_E^{\rm 2D\text{-}3C}=(\rho,m_1,m_2,\mathcal E,
 A_{11},A_{12},A_{21},A_{22},A_{33},J_1,J_2)^T,
 \label{eq:full-planar-state}
\end{equation}
and all determinants, cofactors, traces, and deviators remain genuinely
three-dimensional:
\begin{equation}
 g=a_3\det A_\parallel,\qquad
 \w=\begin{pmatrix}a_3\cof_2A_\parallel&0\\
 0&\det A_\parallel\end{pmatrix},
 \qquad \w:\A=3g.
 \label{eq:J-planar-cofactor}
\end{equation}
Here $A_\parallel\in\mathbb R^{2\times2}$ is the in-plane distortion block,
and $\cof_2$ denotes the two-dimensional cofactor; explicitly, for
\(C=\left(\begin{smallmatrix}c_{11}&c_{12}\\c_{21}&c_{22}\end{smallmatrix}\right)\),
\(\cof_2C=\left(\begin{smallmatrix}c_{22}&-c_{21}\\-c_{12}&c_{11}\end{smallmatrix}\right)\).
After imposing \eqref{eq:J-planar-A}, the momentum and thermal blocks are
\(\m=(m_1,m_2)^T\) and \(\Jv=(J_1,J_2,0)^T\), and every distortion
increment is stored through the five entries displayed in
\eqref{eq:full-planar-state}; formulas written as \(3\times3\) matrix contractions use
their block-diagonal embeddings.
The four cross-block entries remain zero, but $A_{33}$ is evolved:
\begin{equation}
 \partial_tA_{33}+v_1\partial_xA_{33}
 +v_2\partial_yA_{33}=S_{A,33}.
 \label{eq:A33}
\end{equation}
Freezing $A_{33}$ would change both the determinant and the standard
three-dimensional shear constitutive law, and is not considered.
The conditions $J_3=0$ and $\partial_z=0$ are invariant under
\eqref{eq:full-GPR-J}; hence only $J_1,J_2$ are evolved.  All formulas for
$\bm\phi$ and $\betaJ$ use their three-component embeddings.

\section{First-order geometric-compatible finite-volume discretization}
\label{sec:fv-compatible}

This section constructs the first-order geometric-compatible finite-volume
scheme, denoted GC-FV.  It evolves the reduced state
$U=(Y,\A,\Jv)^T$, with $Y=(\m,\mathcal E)^T$, and recovers density from
$\rho=\rho_0\det\A$.  We first construct nondissipative conservative fluxes
and one-sided fluctuations, and then add a projected local
Lax--Friedrichs (LF) stabilization.  The completed GC-FV method is valid for a
general admissible EOS whenever $\w:S_A=0$.

For an oriented face, let the unit normal $n$ point from $L$ to $R$, and
write $\Delta a=a_R-a_L$, $\overline a=(a_L+a_R)/2$, and
$v_{n,a}=\vct_a\cdot n$.  Superscripts $-$ and $+$ denote the fluctuations
assigned to the left and right states at this fixed orientation.

\subsection{Nondissipative GC-FV construction}
\label{sec:geometry}

In this subsection we construct the nondissipative numerical fluxes and
distortion fluctuations used by GC-FV.  We start from the reduced system of
Boscheri et al., establish the determinant identities needed at a finite
jump, and then construct a two-state companion identity and a
fluctuation pair that realizes a selected density flux.  LF
stabilization and the complete GC-FV update follow in
Sections~\ref{sec:dissipative-FV} and \ref{sec:complete-FV}.

\subsubsection{Boscheri reduction and hidden companion laws}
\label{sec:Boscheri-reduction}

We first remove the independent mass equation so that its replacement,
$\rho=\rho_0\det\A$, becomes a hidden conservation law to enforce
discretely.  This is the fixed-grid reduction introduced by Boscheri et
al.~\cite{Boscheri2024}.  Starting from the energy-primary
GPR system \eqref{eq:full-GPR-energy-system}, discard the
mass equation and reconstruct density algebraically from the distortion:
\begin{equation}
 U=(Y,\A,\Jv)^T,
 \qquad Y=(\m,\mathcal E)^T,
 \qquad \rho=\rho_0g(\A),
 \qquad g(\A)=\det\A,
 \label{eq:model-state}
\end{equation}
where $\rho_0>0$ is a constant reference density.  Under the planar
convention \eqref{eq:J-planar-A}, the reduced state is
\begin{equation}
 U=(m_1,m_2,\mathcal E,
 A_{11},A_{12},A_{21},A_{22},A_{33},J_1,J_2)^T.
 \label{eq:J-state}
\end{equation}
In three space dimensions $U=(Y,\A,\Jv)^T$ retains all three momentum
components, nine distortion components, and three thermal-impulse components.
The reduced balance laws, together with the algebraic density recovery, are
\begin{subequations}
\label{eq:gpr}
\begin{align}
 \partial_t(\rho v_i)+\partial_{x_k}
 \bigl(\rho v_iv_k+p\delta_{ik}+\sigma_{ik}+\phi_{ik}\bigr)&=0,
 \label{eq:gpr-momentum}\\
 \partial_t\mathcal E+\partial_{x_k}
 \bigl((\mathcal E+p)v_k+v_i(\sigma_{ik}+\phi_{ik})
       +T\beta_{J,k}\bigr)&=0,
 \label{eq:gpr-energy}\\
 \partial_tA_{ik}+\partial_{x_k}(A_{im}v_m)
 +v_m(\partial_{x_m}A_{ik}-\partial_{x_k}A_{im})&=S_{A,ik},
 \label{eq:gpr-A}\\
 \partial_tJ_k+\partial_{x_k}(J_mv_m+T)
 +v_m(\partial_{x_m}J_k-\partial_{x_k}J_m)
 &=-\frac{\beta_{J,k}}{\theta_2}.
 \label{eq:gpr-J}
\end{align}
\begin{equation}
 \rho=\rho_0\det\A.
 \label{eq:gpr-density-recovery}
\end{equation}
\end{subequations}
The last line is an algebraic recovery relation, not an independently
evolved balance law.  Here $\bm\sigma=(\sigma_{ik})$ and
$\bm\phi=(\phi_{ik})$ are the shear and
thermal stresses in \eqref{eq:full-GPR-stresses}, $S_A=(S_{A,ik})$ is the isochoric source in
\eqref{eq:isochoric-source}, and $I_3$ denotes the $3\times3$ identity
matrix.  The momentum-flux tensor and energy-flux vector used below are
\begin{equation}
 \mathbf f_m(U)=\rho\vct\otimes\vct+pI_3+\bm\sigma+\bm\phi,
 \qquad
 (\mathbf f_{\mathcal E}(U))_k=(\mathcal E+p)v_k
 +v_i(\sigma_{ik}+\phi_{ik})+T\beta_{J,k}.
 \label{eq:physical-Y-fluxes}
\end{equation}

Contracting \eqref{eq:gpr-A} with the cofactor
$\w=\partial_{\A}g=\cof\A$, and using
$\w:S_A=0$ from \eqref{eq:isochoric-source}, gives
\begin{equation}
 \partial_tg+\nabla\cdot(g\vct)=0,
 \qquad
 \partial_t\rho+\nabla\cdot(\rho\vct)=0.
 \label{eq:continuous-companion}
\end{equation}

The determinant and density equations are therefore hidden companion laws of the reduced system.  Reduction turns the algebraic determinant
constraint into a discrete chain-rule problem: the cofactor contraction of
the distortion scheme must reproduce a conservative density scheme.  The
same pair identities telescope under DGSEM summation by parts (SBP)
in Section~\ref{sec:high-order}.  All claims here are semidiscrete.
Let \(\Omega\subset\mathbb R^2\) denote the computational domain and
\(\partial\Omega\) its physical boundary.

\subsubsection{Necessary algebraic formulas}
\label{sec:G-algebra-tools}

The finite-volume construction below must evaluate the straight-path
Dal Maso--LeFloch--Murat (DLM)
jump of the distortion equation and replace the differential determinant
chain rule by exact two-state identities.  We therefore collect here the
matrix algebra and normal operators needed for those two steps.

For $X,Y,Z\in\mathbb R^{3\times3}$, define the tensor cross product by
\begin{equation}
 (X\tcross Y)_{ij}
 :=\varepsilon_{ipq}\varepsilon_{jrs}X_{pr}Y_{qs},
 \label{eq:tensor-cross-definition}
\end{equation}
where $i,j,p,q,r,s\in\{1,2,3\}$ and $\varepsilon_{ipq}$ is the alternating
symbol.  This product is bilinear and symmetric and satisfies
\begin{equation}
 X\tcross Y=Y\tcross X,
 \qquad
 (X\tcross Y):Z=(X\tcross Z):Y=(Y\tcross Z):X.
 \label{eq:tensor-cross-symmetry}
\end{equation}
In particular,
\begin{equation}
 \cof X=\frac12X\tcross X,
 \qquad
 \det X=\frac16(X\tcross X):X.
 \label{eq:tensor-cross-minors}
\end{equation}
These polynomial identities are standard in polyconvex finite-strain
formulations \cite{Bonet2015,Betsch2018}.  If $X$ is invertible, they are
equivalent to $\cof X=(\det X)X^{-T}$.

For a unit normal $n$, write
\begin{equation}
 f_A(U;n)=(\A\vct)\otimes n,
 \qquad h(U;n)=g\,\vct\cdot n,
 \label{eq:geometry-notation}
\end{equation}
where $f_A$ is the conservative normal distortion flux and $h$ is the
physical normal flux of the determinant law.  For a matrix increment $X$,
the normal nonconservative operator is
\begin{equation}
 B_n(\vct)X=(\vct\cdot n)X-(X\vct)\otimes n.
 \label{eq:B-normal}
\end{equation}
The flux $f_A$ and operator $B_n$ enter the straight-path generalized jump
in \eqref{eq:G-straight-path-jump}; the tensor-cross identities yield the
finite-jump determinant formulas derived next.

\subsubsection{Polynomial two-state determinant algebra}
\label{sec:G-polarization-algebra}

We next establish the exact finite-jump determinant identities needed to
replace a differential chain rule by two-state algebra.  In particular,
they determine the endpoint candidate determinant fluxes in
\eqref{eq:G-baseline-h-candidates}, isolate the finite-jump defect in
\eqref{eq:G-baseline-defect}, and quantify the endpoint linearization
errors used by the projected LF stabilization.  The resulting two-state
companion pair is subsequently lifted by the SBP operator in the GC-DG
construction.
For two distortion states, put
\begin{equation}
 \Delta\A:=\A_R-\A_L,
 \qquad
 \A_c:=\frac12(\A_L+\A_R),
 \qquad
 g_a:=\det\A_a,
 \qquad
 \w_a:=\cof\A_a,
 \quad a\in\{L,R\}.
 \label{eq:G-polarization-data}
\end{equation}
The bilinearity of \eqref{eq:tensor-cross-definition} gives
\begin{equation}
 \Delta\w=\A_c\tcross\Delta\A,
 \qquad
 \frac12(\w_L+\w_R)=\cof\A_c+\frac14\cof(\Delta\A).
 \label{eq:G-cofactor-polarization}
\end{equation}
Define the quadratic and cubic jump polynomials
\begin{equation}
 a_{\Delta A}:=\Delta\w:\Delta\A
 =(\A_c\tcross\Delta\A):\Delta\A,
 \qquad
 d_{\Delta A}:=\det(\Delta\A)
 =\frac16(\Delta\A\tcross\Delta\A):\Delta\A.
 \label{eq:G-jump-polynomials}
\end{equation}
The exact determinant polarization is
\begin{equation}
 \Delta g
 =\cof\A_c:\Delta\A+\frac14d_{\Delta A}.
 \label{eq:G-determinant-polarization}
\end{equation}
In particular $a_{\Delta A}=O(\|\Delta\A\|_F^2)$ and
$d_{\Delta A}=O(\|\Delta\A\|_F^3)$ on compact state sets.  The schemes below use endpoint
cofactors and do not require a determinant jump quotient.

\subsubsection{Companion projection of the baseline fluctuations}

We first construct a straight-path baseline for the nonconservative
distortion equation.  We then derive the two-state identity required by
the hidden determinant law and project the baseline onto a prescribed
density flux.

\paragraph{Straight-path path-conservative baseline}
The nonconservative distortion equation is represented by two
one-sided face fluctuations.  Following the DLM framework
\cite{DalMaso1995} and its path-conservative discretization
\cite{Pares2006}, we choose a path between the face states to define
their generalized jump.
For a system
$U_t+\nabla\cdot F(U)+B(U):\nabla U=0$, a DLM path
$\Phi(s;U_L,U_R)$ joins $U_L$ to $U_R$.  Define the generalized physical
jump associated with this path by
\begin{equation}
 \mathcal J_\Phi(U_L,U_R;n)
 :=F_n(U_R)-F_n(U_L)
 +\int_0^1B_n(\Phi(s))\,\partial_s\Phi(s)\,\dd s.
 \label{eq:G-DLM-fluctuation-sum}
\end{equation}
Here $F_n=F\cdot n$ and $B_n$ is the normal nonconservative operator.
We call a numerical pair \emph{path-sum consistent} with the selected
path when
\begin{equation}
 \mathcal D^-(U_L,U_R;n)+\mathcal D^+(U_L,U_R;n)
 =\mathcal J_\Phi(U_L,U_R;n).
 \label{eq:path-sum-definition}
\end{equation}
This is the defining face-sum relation of a path-conservative
fluctuation pair \cite{Pares2006}; the path fixes the sum but not its
left/right allocation.  It refers to the selected path, not to a
unique physical shock path.
For distortion, $F_n=f_A(U;n)$ and the integral uses
$B_n(\vct(\Phi))\partial_s\A(\Phi)$ from
\eqref{eq:geometry-notation}--\eqref{eq:B-normal}; denote its block by
$\mathcal J_{A,\Phi}:=(\mathcal J_\Phi)_A$.
In the normal direction, the homogeneous distortion operator has the
quasilinear form
\begin{equation}
 v_n\partial_n\A+(\A\,\partial_n\vct)\otimes n.
 \label{eq:G-normal-A-operator}
\end{equation}
We now specify the DLM path that defines the baseline generalized jump.
In the variables entering \eqref{eq:G-normal-A-operator}, choose the
straight path
\begin{equation}
 \Phi_A^{\rm lin}(s)=\A_L+s\Delta\A,\qquad
 \Phi_v^{\rm lin}(s)=\vct_L+s\Delta\vct,\qquad 0\le s\le1,
 \label{eq:G-linear-A-v-path}
\end{equation}
where $\Delta\A=\A_R-\A_L$ and $\Delta\vct=\vct_R-\vct_L$.  Integrating
\eqref{eq:G-normal-A-operator} along \eqref{eq:G-linear-A-v-path} gives the
distortion block of the generalized jump defined in
\eqref{eq:G-DLM-fluctuation-sum}:
\begin{equation}
 \mathcal J_A^{\rm lin}:=\mathcal J_{A,\Phi^{\rm lin}}
 =\overline v_n\Delta\A+(\A_c\Delta\vct)\otimes n,
 \qquad \A_c=\frac12(\A_L+\A_R).
 \label{eq:G-straight-path-jump}
\end{equation}
One exchange-symmetric endpoint allocation of $\mathcal J_A^{\rm lin}$ is
\begin{align}
 \mathcal H_{A,0}^-&=\frac12\overline v_n\Delta\A
 +\frac12(\A_L\Delta\vct)\otimes n,
 \label{eq:G-baseline-minus}\\
 \mathcal H_{A,0}^+&=\frac12\overline v_n\Delta\A
 +\frac12(\A_R\Delta\vct)\otimes n.
 \label{eq:G-baseline-plus}
\end{align}
Their sum is $\mathcal J_A^{\rm lin}$, so this is a
path-conservative baseline for the selected straight path.  The first
term transports the distortion jump; the second represents the
velocity-gradient and nonconservative coupling.  Other allocations
occur in GPR schemes \cite{Dumbser2016,Boscheri2024}; the choice above
will give the compact cubic defect \eqref{eq:G-baseline-defect}.  Path conservation here describes
only the baseline: the subsequent companion-law projection need not
preserve its fluctuation sum.

\begin{remark}[Endpoint versus average allocation]
\label{rem:G-endpoint-versus-half-split}
The standard path-conservative GPR split assigns one half of the
straight-path generalized jump to each endpoint
\cite{Dumbser2016,Boscheri2024}:
\begin{equation}
 \mathcal H_{A,\mathrm{avg}}^-
 =\mathcal H_{A,\mathrm{avg}}^+
 =\frac12\mathcal J_A^{\rm lin}
 =\frac12\overline v_n\Delta\A
  +\frac12(\A_c\Delta\vct)\otimes n.
 \label{eq:G-standard-average-allocation}
\end{equation}
The endpoint pair \eqref{eq:G-baseline-minus}--\eqref{eq:G-baseline-plus}
differs from \eqref{eq:G-standard-average-allocation} by
$\mp\tfrac14(\Delta\A\,\Delta\vct)\otimes n$, so both choices have the same DLM
sum.  We use the endpoint allocation because it makes both the GC-FV
companion projection and the later nonconservative subcell factorization
more direct.  The standard average allocation can also be made
geometric-compatible and rewritten in subcell form, but the resulting
algebra is more involved and is outside the scope of this work.
\end{remark}

The thermal-impulse equation has the normal homogeneous operator
\begin{equation}
 v_n\partial_n\Jv+n\bigl(\Jv\cdot\partial_n\vct+\partial_nT\bigr).
 \label{eq:G-normal-J-operator}
\end{equation}
Along the same straight path in $(\Jv,\vct,T)$, an exchange-symmetric
path-conservative baseline is
\begin{align}
 \mathcal H_{J,0}^-&=\frac12\overline v_n\Delta\Jv
 +\frac12n\bigl(\Jv_L\cdot\Delta\vct+\Delta T\bigr),
 \label{eq:G-J-baseline-minus}\\
 \mathcal H_{J,0}^+&=\frac12\overline v_n\Delta\Jv
 +\frac12n\bigl(\Jv_R\cdot\Delta\vct+\Delta T\bigr).
 \label{eq:G-J-baseline-plus}
\end{align}
Indeed, their sum is
$\overline v_n\Delta\Jv+n(\overline\Jv\cdot\Delta\vct+\Delta T)$,
the straight-path generalized jump of \eqref{eq:G-normal-J-operator}.
This straight-path pair is used without further modification in the thermal-impulse block of GC-FV.

\paragraph{Tadmor-type two-state companion identity}
To induce the hidden determinant (and hence density) equation from the
distortion equation, we now derive a two-state constraint following the
strategy of Tadmor's entropy-conservative flux construction
\cite{Tadmor1987,Chandrashekar2013}.  Its fluctuation-pair form is also the
nonconservative analogue of the entropy-conservative condition studied by
Castro et al.~\cite{Castro2013}.  For a cell $K$, write a general
semidiscrete FV distortion equation as
\begin{equation}
 \frac{\dd\A_K}{\dd t}
 +\frac1{|K|}\sum_{f\subset\partial K}|f|\,
 \mathcal H_A^-(U_K,U_{K_f};n_{Kf})=S_{A,K}.
 \label{eq:G-general-A-FV}
\end{equation}
Here $K_f$ is the neighbor across $f$, $n_{Kf}$ is the outward unit
normal of $K$, and $\mathcal H_A^-$ is the fluctuation assigned to $K$.
At an interior face oriented from $L$ to $R$, the neighbor receives
$\mathcal H_A^+(U_L,U_R;n)
=\mathcal H_A^-(U_R,U_L;-n)$.
Contracting with $\w_K=\cof\A_K$ gives the exact semidiscrete chain
rule $\dd g_K/\dd t=\w_K:\dd\A_K/\dd t$, while $\w:S_A=0$.
The physical determinant flux is
\begin{equation}
 h(U;n)=g(U)\,\vct(U)\cdot n
 \label{eq:G-physical-h}
\end{equation}
by \eqref{eq:continuous-companion}; it is not a numerical choice.
If the contracted equations share one numerical determinant flux
$\widehat h$, their one-sided scalar fluctuations must satisfy
\begin{equation}
 \w_L:\mathcal H_A^-=\widehat h-h_L,
 \qquad
 \w_R:\mathcal H_A^+=h_R-\widehat h,
 \label{eq:G-one-sided-h-reconstruction}
\end{equation}
where $h_a=h(U_a;n)=g_av_{n,a}$.  Eliminating $\widehat h$ gives the
Tadmor-type geometric two-state identity
\begin{equation}
 \boxed{\w_L:\mathcal H_A^-+\w_R:\mathcal H_A^+
 =h_R-h_L.}
 \label{eq:G-pair}
\end{equation}
Conversely, this identity defines a single-valued flux
\begin{equation}
 \widehat h
 =h_L+\w_L:\mathcal H_A^-
 =h_R-\w_R:\mathcal H_A^+.
 \label{eq:G-hhat-reconstruction}
\end{equation}
Thus \eqref{eq:G-pair} is the discrete chain rule for the hidden law,
not the DLM path-sum condition \eqref{eq:path-sum-definition}.

\paragraph{Projection to a prescribed density flux}
We now specify a density flux and determine the nondissipative
distortion fluctuations by projecting the path-conservative baseline
onto the Tadmor-type companion constraint just derived.  The baseline
does not, in general, induce a single-valued determinant flux: its two
endpoint reconstructions
\begin{equation}
 h_{0,L}:=h_L+\w_L:\mathcal H_{A,0}^-,
 \qquad
 h_{0,R}:=h_R-\w_R:\mathcal H_{A,0}^+
 \label{eq:G-baseline-h-candidates}
\end{equation}
differ by the cubic determinant defect
\begin{equation}
 h_{0,L}-h_{0,R}
 =\frac12\overline v_nd_{\Delta A},\qquad
 d_{\Delta A}=\det(\Delta\A).
 \label{eq:G-baseline-defect}
\end{equation}
This is the finite-increment term missed by a linearized chain rule.
The geometric-compatible condition permits any consistent single-valued target
determinant flux.  For the general-EOS method choose
\begin{equation}
 F_\rho^0:=\frac12(\rho_Lv_{n,L}+\rho_Rv_{n,R}),
 \qquad h_G^t:=\frac{F_\rho^0}{\rho_0}.
 \label{eq:G-central-mass-flux}
\end{equation}
This arithmetic target is symmetric, EOS independent, and becomes the
ordinary central--LF density flux after stabilization.

We now impose the target flux as a hard constraint and otherwise change the
straight-path baseline as little as possible:
\begin{equation}
 \boxed{
 (\mathcal H_A^{-,G},\mathcal H_A^{+,G})
 =\arg\min_{\mathcal H^-,\mathcal H^+}
 \frac12\left(
 \|\mathcal H^--\mathcal H_{A,0}^-\|_F^2+
 \|\mathcal H^+-\mathcal H_{A,0}^+\|_F^2\right)}
 \label{eq:G-volume-optimization}
\end{equation}
subject to
\begin{equation}
 \w_L:\mathcal H^-=h_G^t-h_L,\qquad
 \w_R:\mathcal H^+=h_R-h_G^t.
 \label{eq:G-volume-constraints}
\end{equation}
The constraints are exactly the two one-sided forms of
\eqref{eq:G-hhat-reconstruction}; the objective gives the minimum
Frobenius modification of the selected DLM baseline.  Define
\begin{equation}
 P_a:=\frac{\w_a}{\w_a:\w_a},\qquad a\in\{L,R\}.
 \label{eq:G-endpoint-projector}
\end{equation}
Since $g_a>0$ implies $\w_a\ne0$, $P_a$ is well defined and
$\w_a:P_a=1$.  Define the two constraint residuals
\begin{align}
 R_L^G&:=h_G^t-h_{0,L},&
 R_R^G&:=h_{0,R}-h_G^t.
 \label{eq:G-projection-residuals}
\end{align}
Using \eqref{eq:G-baseline-h-candidates},
\eqref{eq:G-central-mass-flux}, and the determinant polarizations
\eqref{eq:G-cofactor-polarization}--\eqref{eq:G-determinant-polarization},
these residuals reduce to
\begin{equation}
 \begin{aligned}
 R_L^G&=\frac14\Delta g\,\Delta v_n
       +\frac14\overline v_n(a_{\Delta A}-d_{\Delta A}),\\
 R_R^G&=-\frac14\Delta g\,\Delta v_n
       -\frac14\overline v_n(a_{\Delta A}+d_{\Delta A}).
 \end{aligned}
 \label{eq:G-projection-residuals-closed}
\end{equation}
Lemma~\ref{lem:app-single-projection} then gives
\begin{equation}
 \boxed{\begin{aligned}
 \mathcal H_A^{-,G}&=\mathcal H_{A,0}^-+R_L^GP_L,\\
 \mathcal H_A^{+,G}&=\mathcal H_{A,0}^++R_R^GP_R.
 \end{aligned}}
 \label{eq:G-volume-pair}
\end{equation}
Thus both endpoints reconstruct $h_G^t$, and \eqref{eq:G-pair} follows
immediately.  All objects in \eqref{eq:G-volume-pair} have been defined in
\eqref{eq:G-polarization-data},
\eqref{eq:G-baseline-minus}--\eqref{eq:G-baseline-h-candidates}, and
\eqref{eq:G-central-mass-flux}--\eqref{eq:G-endpoint-projector}.

\begin{lemma}[Minimum-change determinant companion pair]
\label{lem:G-core-companion}
For $g_L,g_R>0$, problem \eqref{eq:G-volume-optimization}--
\eqref{eq:G-volume-constraints} has the unique solution
\eqref{eq:G-volume-pair}, which contains no jump quotient.  It satisfies
\begin{equation}
 h_L+\w_L:\mathcal H_A^{-,G}
 =h_R-\w_R:\mathcal H_A^{+,G}=h_G^t,
 \qquad
 \w_L:\mathcal H_A^{-,G}+\w_R:\mathcal H_A^{+,G}=h_R-h_L.
 \label{eq:G-core-lemma-identities}
\end{equation}
It also has the fixed-normal exchange symmetry
\begin{equation}
 \mathcal H_A^{-,G}(U_R,U_L;n)
 =-\mathcal H_A^{+,G}(U_L,U_R;n).
 \label{eq:G-core-lemma-exchange}
\end{equation}
On every compact admissible set with $g>0$, the pair has a smooth,
direction-independent equal-state extension.
\end{lemma}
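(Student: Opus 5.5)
The optimization problem \eqref{eq:G-volume-optimization}--\eqref{eq:G-volume-constraints} decouples into two independent problems: one in $\mathcal H^-$ with the single linear constraint $\w_L:\mathcal H^-=h_G^t-h_L$, and one in $\mathcal H^+$ with $\w_R:\mathcal H^+=h_R-h_G^t$. Each is the Euclidean (Frobenius) projection of the baseline onto an affine hyperplane in $\mathbb R^{3\times3}$ (or its planar embedding). The hyperplane is nondegenerate because $g_a>0$ forces $\w_a\neq0$. I would invoke Lemma~\ref{lem:app-single-projection}, or reprove it in one line: writing $\mathcal H^-=\mathcal H_{A,0}^-+X$, the constraint becomes $\w_L:X=h_G^t-h_L-\w_L:\mathcal H_{A,0}^-=R_L^G$ by \eqref{eq:G-baseline-h-candidates} and \eqref{eq:G-projection-residuals}. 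The minimum-norm $X$ is the component along $\w_L$, namely $X=R_L^GP_L$. Uniqueness follows from strict convexity of the objective on an affine set, or from Pythagoras: any other feasible $X$ differs from $R_L^GP_L$ by something orthogonal to $\w_L$. The same argument with $R_R^G=h_{0,R}-h_G^t$ handles the right endpoint. The formula involves only the endpoint cofactors and the polynomial residuals \eqref{eq:G-projection-residuals-closed}, and its only denominators are $\w_a:\w_a>0$. Hence it contains no jump quotient such as $\Delta g/\Delta\A$.

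The identities \eqref{eq:G-core-lemma-identities} then follow directly. Using $\w_L:P_L=1$ gives $h_L+\w_L:\mathcal H_A^{-,G}=h_{0,L}+R_L^G=h_G^t$, and similarly on the right. Subtracting the two reconstructions gives the pair identity \eqref{eq:G-pair}.

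For the exchange symmetry \eqref{eq:G-core-lemma-exchange}, I would swap $L\leftrightarrow R$ at fixed $n$ and track each ingredient:
\begin{itemize}
\item $\Delta\A$ and $\Delta\vct$ change sign;
\item $\overline v_n$, $\A_c$ and $h_G^t$ are symmetric;
\item $P_L$ and $P_R$ exchange.
\end{itemize}
From \eqref{eq:G-baseline-minus}--\eqref{eq:G-baseline-plus}, $\mathcal H_{A,0}^-(U_R,U_L)=-\tfrac12\overline v_n\Delta\A-\tfrac12(\A_R\Delta\vct)\otimes n=-\mathcal H_{A,0}^+(U_L,U_R)$. For the residuals, $a_{\Delta A}$ is even in $\Delta\A$, $d_{\Delta A}$ is odd, and $\Delta g\,\Delta v_n$ is even. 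By \eqref{eq:G-projection-residuals-closed}, $R_L^G(U_R,U_L)=\tfrac14\Delta g\Delta v_n+\tfrac14\overline v_n(a+d)=-R_R^G(U_L,U_R)$. Since the projectors swap as well, the symmetry follows.

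I expect the main point needing care to be confirming the closed residual forms \eqref{eq:G-projection-residuals-closed}; these were asserted earlier and I would take them as given. The final claim concerns the equal-state extension. All ingredients are polynomials in $(\A_L,\A_R,\vct_L,\vct_R)$, or smooth functions of the states through $\rho$, $p$ and so on, divided by $\w_a:\w_a$, which is bounded below on compact admissible sets. The pair is therefore smooth as a function of $(U_L,U_R)$ on the whole set, including the diagonal. At $U_L=U_R$ every jump term vanishes, so both residuals and both baselines are zero. The extension is thus zero, independent of $n$, and no limit is needed.
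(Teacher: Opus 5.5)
Your proof is correct and follows essentially the paper's route. You use decoupled one-constraint Frobenius projections via Lemma~\ref{lem:app-single-projection}, read the identities directly off the constraints, and obtain exchange symmetry by tracking the parity of the jump data together with $P_L\leftrightarrow P_R$. There are two minor differences.

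First, you get the equal-state extension from smoothness of the quotient-free formula on the whole admissible product set, including the diagonal. The paper instead uses the $O(\delta^2)$ size of $R_a^G$. Your continuity argument is what makes the limit independent of the approach direction $V$ in $U_R=U_L+\tau V$, which is the intended meaning of ``direction-independent'', not independence of the normal $n$.

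Second, the swap $R_L^G(U_R,U_L;n)=-R_R^G(U_L,U_R;n)$ follows from the definitions alone. The baseline exchange gives $h_{0,L}(U_R,U_L)=h_R-\w_R:\mathcal H_{A,0}^+(U_L,U_R)=h_{0,R}(U_L,U_R)$, and $h_G^t$ is symmetric. So you do not need to rely on the closed forms \eqref{eq:G-projection-residuals-closed} that you flagged as the step needing care.
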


The endpoint reconstruction equality and the summed contraction identity
in \eqref{eq:G-core-lemma-identities} follow directly from the two
projection constraints \eqref{eq:G-volume-constraints}.  Uniqueness follows
from Lemma~\ref{lem:app-single-projection}; fixed-normal exchange symmetry
follows from Lemma~\ref{lem:app-GC-exchange-orientation}, and the smooth
equal-state extension from Appendix~\ref{app:GC-DG-proof}.
Lemma~\ref{lem:G-core-companion} is the complete
algebraic content needed from the nondissipative GC-FV core.

For the conservative block $Y=(\m,\mathcal E)^T$, the
nondissipative numerical flux used in GC-FV is the arithmetic mean
\begin{equation}
 f_Y(U;n):=(\mathbf f_m(U)n,
 \mathbf f_{\mathcal E}(U)\cdot n)^T,\qquad
 F_Y^0:=\frac12\{f_Y(U_L;n)+f_Y(U_R;n)\}.
 \label{eq:G-FY0}
\end{equation}
The physical fluxes $\mathbf f_m$ and $\mathbf f_{\mathcal E}$ are
defined in \eqref{eq:physical-Y-fluxes}.  Thus $F_Y^0$ together with
\eqref{eq:G-volume-pair} specifies the nondissipative numerical flux
and distortion fluctuations; the complete GC-FV face operator is
assembled after adding the common LF correction.

\begin{remark}[Path-sum consistency versus algebraic regularity]
\label{rem:G-path-regularity}
In the sense of \eqref{eq:path-sum-definition}, the straight-path baseline
is path-sum consistent with the selected linear path: its two one-sided
fluctuations sum to $\mathcal J_{A,\Phi}$.  The geometric projection changes
this sum, so the projected nondissipative pair is generally no longer
path-sum consistent with that path.  The later LF projection can introduce
an additional defect.  These two defects are quantified in
\eqref{eq:G-volume-path-defect} and \eqref{eq:PLF-path-defect}.
Enforcing a zero correction sum while prescribing the density flux would
retain path-sum consistency according to \eqref{eq:path-sum-definition}, but
it introduces a Gram denominator proportional to
$\|\Delta\w\|_F^2$.  The resulting expression degenerates as the states
coalesce and has no bounded, direction-independent equal-state extension;
we therefore retain the algebraically regular projected pair.
\end{remark}

\subsection{Dissipative fluxes and fluctuations for GC-FV}
\label{sec:dissipative-FV}

The preceding subsection constructed the nondissipative GC-FV face pair.
Shock computations also require numerical viscosity.  We now add local
Lax--Friedrichs/Rusanov state-jump dissipation
\cite{Rusanov1962,CockburnShu2001} to obtain the complete GC-FV pair.  The distortion increment
must be projected so that its induced density law retains exact LF
dissipation; the momentum--energy and thermal-impulse increments are
ordinary LF.

\paragraph{Minimum-change projected LF correction}
Let $a_f$ bound the spectral radius of
the reduced GPR operator normal to a face and put $\ell_f=a_f/2$.  Directly
adding LF dissipation to the distortion fluctuations would give the raw
increments
\begin{equation}
 D_{A,\mathrm{raw}}^-=-\ell_f\Delta\A,\qquad
 D_{A,\mathrm{raw}}^+=+\ell_f\Delta\A.
 \label{eq:G-raw-LF-increments}
\end{equation}
Adding these increments to either core gives contractions
$\w_L:D_{A,\mathrm{raw}}^-=-\ell_f\w_L:\Delta\A$ and
$\w_R:D_{A,\mathrm{raw}}^+=+\ell_f\w_R:\Delta\A$, whereas exact density LF
requires $\mp\ell_f\Delta g$.  The
polarization identities \eqref{eq:G-cofactor-polarization}--
\eqref{eq:G-determinant-polarization} give
\begin{equation}
 \w_L:\Delta\A=\Delta g-\frac12(a_{\Delta A}-d_{\Delta A}),\qquad
 \w_R:\Delta\A=\Delta g+\frac12(a_{\Delta A}+d_{\Delta A}),
 \label{eq:G-endpoint-linearization-errors}
\end{equation}
where $a_{\Delta A},d_{\Delta A}$ are defined in
\eqref{eq:G-jump-polynomials}.  Thus raw
LF misses the exact density jump at finite states because $\det\A$ is
nonlinear.

We now impose the missing density-LF property as a hard constraint while
changing the trusted raw $A$-LF increments as little as possible.  The
semidiscrete determinant chain rule and $\rho=\rho_0g$ require
\begin{equation}
 \w_L:D_A^-=-\ell_f\Delta g,\qquad
 \w_R:D_A^+=+\ell_f\Delta g.
 \label{eq:PLF-optimization-constraints}
\end{equation}
Indeed, multiplication by $\rho_0$ converts these conditions into the
desired left and right density increments $\mp\ell_f\Delta\rho$.  Among
all matrix pairs obeying them, we select the one closest to the raw LF
increments:
\begin{equation}
 \boxed{
 (D_A^-,D_A^+)
 =\arg\min_{(D^-,D^+)}
 \frac12\left(
 \|D^-+\ell_f\Delta\A\|_F^2
 +\|D^+-\ell_f\Delta\A\|_F^2\right)}
 \label{eq:PLF-optimization}
\end{equation}
subject to
\begin{equation}
 \w_L:D^-=-\ell_f\Delta g,\qquad
 \w_R:D^+=+\ell_f\Delta g.
 \label{eq:PLF-optimization-admissible-set}
\end{equation}
The constraints decouple by endpoint, so
Lemma~\ref{lem:app-single-projection} gives the unique solution
\begin{equation}
 \boxed{D_A^-=-\ell_f(\Delta\A+\gamma_LP_L),\qquad
 D_A^+=+\ell_f(\Delta\A+\gamma_RP_R),}
 \label{eq:PLF}
\end{equation}
where the polynomial identities give
\begin{equation}
 \gamma_L=\Delta g-\w_L:\Delta\A
 =\frac12(a_{\Delta A}-d_{\Delta A}),\qquad
 \gamma_R=\Delta g-\w_R:\Delta\A
 =-\frac12(a_{\Delta A}+d_{\Delta A}).
 \label{eq:PLF-data}
\end{equation}
Here $a_{\Delta A}=\Delta\w:\Delta\A$ and
$d_{\Delta A}=\det(\Delta\A)$ are defined in
\eqref{eq:G-jump-polynomials}.
The final stabilized distortion fluctuations are
\begin{equation}
 \widehat{\mathcal H}_{A}^-
 =\mathcal H_A^{-,G}+D_A^-,
 \qquad
 \widehat{\mathcal H}_{A}^+
 =\mathcal H_A^{+,G}+D_A^+.
 \label{eq:G-PLF-stabilized-pair}
\end{equation}
Here $\mathcal H_A^{\pm,G}$ is the nondissipative pair from
\eqref{eq:G-volume-pair}.
The correction is cofactor-parallel, orthogonal to the
determinant-neutral space
\begin{equation}
 \mathcal N_a:=\{X:\w_a:X=0\}.
 \label{eq:PLF-neutral-space}
\end{equation}
because $D(\det)(\A_a)[X]=\w_a:X$.  Thus the four neutral planar modes
retain raw LF damping, while the determinant-active mode supplies exactly
$-\ell_f\Delta\rho$.

\begin{lemma}[Projected LF correction and exact density LF]
\label{lem:G-PLF}
For $g_L,g_R>0$, the minimum-change problem
\eqref{eq:PLF-optimization}--\eqref{eq:PLF-optimization-admissible-set}
has the unique solution \eqref{eq:PLF}--\eqref{eq:PLF-data}, which contains
no jump quotient and is smooth on compact admissible state sets with $g>0$.
For $a=L,R$, define
\begin{equation}
 \Pi_aX:=(\w_a:X)P_a.
 \label{eq:PLF-Pi-definition}
\end{equation}
Then its endpoint increments satisfy
\begin{gather}
 \w_L:D_A^-=-\ell_f\Delta g,\qquad
 \w_R:D_A^+=+\ell_f\Delta g,
 \label{eq:PLF-lemma-contractions}\\
 \Delta\A+\gamma_aP_a=(I-\Pi_a)\Delta\A+\Delta g\,P_a.
 \label{eq:PLF-lemma-mode-split}
\end{gather}
Thus the projection leaves the raw LF increment unchanged in every
determinant-tangent direction and produces the exact scalar LF increment
$\mp\ell_f\Delta\rho$ in the induced density law.  It also obeys the
orientation identity
\begin{equation}
 D_A^-(U_R,U_L;-n)=D_A^+(U_L,U_R;n).
 \label{eq:PLF-lemma-orientation}
\end{equation}
\end{lemma}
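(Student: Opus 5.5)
The plan is to treat the minimum-change problem as two decoupled single-constraint least-squares problems. Both the objective and the constraints split by endpoint. For the left endpoint, set $X=D^-+\ell_f\Delta\A$. The problem then becomes: minimize $\tfrac12\|X\|_F^2$ subject to the affine constraint
\[
 \w_L:X=-\ell_f\Delta g+\ell_f\w_L:\Delta\A=-\ell_f\gamma_L .
\]
Lemma~\ref{lem:app-single-projection} states that the unique minimizer of $\tfrac12\|X-X_0\|_F^2$ subject to $\w_a:X=c$, with $\w_a\ne0$, is $X_0+(c-\w_a:X_0)P_a$. Here $g_a>0$ gives $\w_a\ne0$. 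Applying it yields $D_A^-=-\ell_f\Delta\A-\ell_f\gamma_LP_L$. The right endpoint is treated the same way, with $X=D^+-\ell_f\Delta\A$ and target $\ell_f\Delta g-\ell_f\w_R:\Delta\A=\ell_f\gamma_R$. This gives \eqref{eq:PLF} with $\gamma_a=\Delta g-\w_a:\Delta\A$.

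The closed forms of $\gamma_a$ in \eqref{eq:PLF-data} follow from \eqref{eq:G-endpoint-linearization-errors}. Those in turn come from writing $\w_L=\tfrac12(\w_L+\w_R)-\tfrac12\Delta\w$ and $\w_R=\tfrac12(\w_L+\w_R)+\tfrac12\Delta\w$. One then uses \eqref{eq:G-cofactor-polarization}, the determinant polarization \eqref{eq:G-determinant-polarization}, and the identity $\cof(\Delta\A):\Delta\A=3d_{\Delta A}$, which follows from \eqref{eq:tensor-cross-minors}.

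These formulas show that $\gamma_a$ is a polynomial in $(\A_L,\A_R)$. The projector $P_a=\w_a/\|\w_a\|_F^2$ is smooth wherever $\w_a\ne0$. On a compact set with $g\ge g_{\min}>0$ we have $\|\w_a\|_F$ bounded below, for instance via $\w_a:\A_a=3g_a$ and Cauchy--Schwarz. Hence no jump quotient appears, and the solution is smooth.

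The contraction identities \eqref{eq:PLF-lemma-contractions} hold by construction, since $\w_a:P_a=1$; they can also be checked directly. For example,
\[
 \w_L:D_A^-=-\ell_f(\w_L:\Delta\A+\gamma_L)=-\ell_f\Delta g .
\]
For the mode split \eqref{eq:PLF-lemma-mode-split}, write $\Delta\A=(I-\Pi_a)\Delta\A+(\w_a:\Delta\A)P_a$. Then
\[
 \Delta\A+\gamma_aP_a=(I-\Pi_a)\Delta\A+(\w_a:\Delta\A+\gamma_a)P_a=(I-\Pi_a)\Delta\A+\Delta g\,P_a .
\]
Since $\Pi_a$ is the orthogonal projector onto $\operatorname{span}\{\w_a\}$, the component of the correction in $\mathcal N_a$ is exactly the raw one, $(I-\Pi_a)\Delta\A$. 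Multiplying the contractions by $\rho_0$ gives the density increments $\mp\ell_f\Delta\rho$.

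The orientation identity \eqref{eq:PLF-lemma-orientation} is the step where care is needed, because swapping the states changes the sign of $\Delta$ quantities. Evaluate $D_A^-$ at the arguments $(U_R,U_L;-n)$. The left state is now $R$, so $P_L\to P_R$ and $\Delta\A\to-\Delta\A$, $\Delta g\to-\Delta g$. The quantity $\gamma$ at the new left endpoint is $(-\Delta g)-\w_R:(-\Delta\A)=-\gamma_R$. The coefficient $\ell_f$ depends on the normal only through a symmetric spectral bound, so it is unchanged under $n\to-n$ with the states swapped; I would state this as the standing assumption on $a_f$. Therefore
\[
 D_A^-(U_R,U_L;-n)=-\ell_f\bigl(-\Delta\A-\gamma_RP_R\bigr)=\ell_f(\Delta\A+\gamma_RP_R)=D_A^+(U_L,U_R;n) .
\]
The main point to check is this bookkeeping, together with the symmetry assumption on $\ell_f$. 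The closed forms of $\gamma_a$ are optional for this step, since only the defining relation $\gamma_a=\Delta g-\w_a:\Delta\A$ is used.
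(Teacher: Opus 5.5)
Your proof is correct and follows essentially the same route as the paper: you decouple by endpoint and apply Lemma~\ref{lem:app-single-projection}, read off the contraction and mode-split identities from $\w_a:P_a=1$ and $\gamma_a=\Delta g-\w_a:\Delta\A$, and obtain the orientation identity from $\Delta\A\mapsto-\Delta\A$, $P_L\leftrightarrow P_R$, $\gamma_L\mapsto-\gamma_R$ with $\ell_f$ unchanged. Two small refinements of that same argument are welcome: you derive $\gamma_L\mapsto-\gamma_R$ directly from the defining relation rather than through the parities of $a_{\Delta A}$ and $d_{\Delta A}$, and you give an explicit lower bound on $\|\w_a\|_F$ via $\w_a:\A_a=3g_a$.
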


The contraction and mode identities are immediate from
$\w_a:P_a=1$ and $\gamma_a=\Delta g-\w_a:\Delta\A$.  Under simultaneous state
exchange and normal reversal, the polynomial rules in
Lemma~\ref{lem:app-GC-exchange-orientation} give
$\Delta\A\mapsto-\Delta\A$, $a_{\Delta A}\mapsto a_{\Delta A}$,
$d_{\Delta A}\mapsto-d_{\Delta A}$, and
$P_L\leftrightarrow P_R$, which proves
\eqref{eq:PLF-lemma-orientation}.  This lemma isolates all properties of the
dissipative projection needed below.

For the conservative block $Y=(\m,\mathcal E)^T$, whose physical flux
$f_Y$ is defined in \eqref{eq:physical-Y-fluxes}, the common ordinary LF
increments are
\begin{equation}
 D_Y^-=-\ell_f\Delta Y,\qquad D_Y^+=+\ell_f\Delta Y.
 \label{eq:J-FV-DY}
\end{equation}
Together \eqref{eq:J-FV-DY} and \eqref{eq:PLF} define the mechanical
dissipation.

The thermal-impulse block receives ordinary LF increments,
\begin{equation}
 D_J^-=-\ell_f\Delta\Jv,\qquad D_J^+=+\ell_f\Delta\Jv.
 \label{eq:GC-FV-DJ}
\end{equation}
No projection is required because $\Jv$ does not enter the determinant
constraint.  They are added to the straight-path baseline
\eqref{eq:G-J-baseline-minus}--\eqref{eq:G-J-baseline-plus}.

\begin{remark}[Incomplete LF strength in the distortion equation]
\label{rem:distortion-density-LF-tradeoff}
In the reduced formulation, geometric compatibility is obtained by deriving
the density update from the distortion equation through the nonlinear map
$\rho=\rho_0\det\A$.  At a finite jump, a raw LF increment in the $A$
equation therefore does not, in general, induce the corresponding LF
increment in the density equation.  Consequently, within this companion-law
construction one cannot in general retain simultaneously the complete raw
$A$-equation LF strength and the exact density LF dissipation.  We choose the
latter: the projection \eqref{eq:PLF} preserves the exact density LF term by
modifying the cofactor-normal part of the $A$ increment, so the resulting
$A$ equation has incomplete LF strength.  The same tradeoff is present in
Boscheri et al.~\cite{Boscheri2024}; moreover, their induced density
dissipation is proportional to
$\Delta g+\tfrac12\det(\Delta\A)$ and hence is not exact density LF in general.
Thus the present construction guarantees the complete density dissipation,
although not the complete distortion dissipation.  None of the numerical
experiments reported below exhibited an instability attributable to this
incomplete $A$-equation LF strength.
\end{remark}

\subsection{Complete GC-FV update and companion law}
\label{sec:complete-FV}

We now combine the nondissipative GC-FV pair with the projected LF
increments and state the resulting cellwise hidden-density balance.

\subsubsection{GC-FV update}
\label{sec:GC-FV}

Using the nondissipative flux $F_Y^0$ in \eqref{eq:G-FY0}, the
complete one-sided face fluctuations are
\begin{align}
 \widehat{\mathcal K}_G^-&=
 \left(F_Y^0-f_Y(U_L;n)-\ell_f\Delta Y,
 \mathcal H_A^{-,G}+D_A^-,
 \mathcal H_{J,0}^-+D_J^-\right)^T,
 \label{eq:GC-FV-left}\\
 \widehat{\mathcal K}_G^+&=
 \left(f_Y(U_R;n)-F_Y^0+\ell_f\Delta Y,
 \mathcal H_A^{+,G}+D_A^+,
 \mathcal H_{J,0}^++D_J^+\right)^T.
 \label{eq:GC-FV-right}
\end{align}
The symmetry of $F_Y^0$ and the exchange identities for the
nondissipative core and projected LF correction imply
\begin{equation}
 \widehat{\mathcal K}_G^-(U_R,U_L;-n)
 =\widehat{\mathcal K}_G^+(U_L,U_R;n).
 \label{eq:GC-FV-orientation}
\end{equation}
For a cell $K$, the complete update is
\begin{equation}
 \boxed{\frac{\dd U_K}{\dd t}
 +\frac1{|K|}\sum_{f\subset\partial K}|f|\,
 \widehat{\mathcal K}_G^{-}(U_K,U_{K_f};n_{Kf})=S_U(U_K),}
 \label{eq:common-FV-scheme}
\end{equation}
where $S_U=(0_Y,S_A,-\betaJ/\theta_2)^T$, $K_f$ is the neighboring
cell, and $n_{Kf}$ is the outward unit normal.

\begin{theorem}[GC-FV determinant companion law]
\label{thm:GC-FV}
For admissible states, constant $\rho_0$, and $\w:S_A=0$,
scheme \eqref{eq:common-FV-scheme} induces
\begin{equation}
 \boxed{\frac{\dd\rho_K}{\dd t}
 +\frac1{|K|}\sum_{f\subset\partial K}|f|\widehat F_{\rho,f}=0,
 \qquad
 \widehat F_{\rho,f}=F_{\rho,f}^0-\ell_f\Delta\rho_f,}
 \label{eq:GC-FV-density}
\end{equation}
with $\rho_K=\rho_0\det\A_K$.  Momentum and total energy are conservative
and have the ordinary central--LF flux.
\end{theorem}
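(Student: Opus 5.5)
The plan is to contract the distortion block of \eqref{eq:common-FV-scheme} with the cell cofactor $\w_K=\cof\A_K$. This uses the exact semidiscrete chain rule $\dd g_K/\dd t=\w_K:\dd\A_K/\dd t$, which holds because $\det$ is a polynomial and $\A_K(t)$ is differentiable. Together with $\w_K:S_A(U_K)=0$ from \eqref{eq:isochoric-source}, this gives
\begin{equation*}
 \frac{\dd g_K}{\dd t}+\frac1{|K|}\sum_{f\subset\partial K}|f|\,
 \w_K:\bigl(\mathcal H_A^{-,G}+D_A^-\bigr)(U_K,U_{K_f};n_{Kf})=0.
\end{equation*}
Everything then reduces to evaluating the face contraction. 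For each face $f$ I take the local orientation with $L=K$, $R=K_f$, $n=n_{Kf}$. Lemma~\ref{lem:G-core-companion} gives $\w_L:\mathcal H_A^{-,G}=h_G^t-h_L$, and Lemma~\ref{lem:G-PLF} gives $\w_L:D_A^-=-\ell_f\Delta g$. Hence the contraction equals $\widehat h_f-h_K$ with $\widehat h_f=F_{\rho,f}^0/\rho_0-\ell_f\Delta g_f$.

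The key step is to see that $\widehat h_f$ is single-valued, so that the neighbor sees the same flux with the opposite sign. From the neighbor's side, its fluctuation is $\widehat{\mathcal H}_A^-(U_R,U_L;-n)$. By the orientation identity \eqref{eq:GC-FV-orientation} (equivalently \eqref{eq:G-core-lemma-exchange} and \eqref{eq:PLF-lemma-orientation}), this equals $\widehat{\mathcal H}_A^+(U_L,U_R;n)$. The right-endpoint identities $\w_R:\mathcal H_A^{+,G}=h_R-h_G^t$ and $\w_R:D_A^+=\ell_f\Delta g$ then give a contraction of $h_R-\widehat h_f$. In the neighbor's outward orientation this reads $\widehat h(-n)-h_R(-n)$, with $\widehat h(-n)=-\widehat h_f$, because $F^0_\rho$ is odd in $n$ and $\Delta g$ flips sign under exchange. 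So every face reconstructs one flux, which is antisymmetric under orientation reversal.

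The remaining step is to remove the leftover cell term $-h_K$. Summed over faces it gives $h_K$-weighted $\sum_f|f|\,g_K\vct_K\cdot n_{Kf}=g_K\vct_K\cdot\sum_f|f|n_{Kf}=0$, by the closed-cell identity $\sum_f|f|n_{Kf}=0$ for polygonal cells. Multiplying by $\rho_0$, which is constant, yields \eqref{eq:GC-FV-density} with $\widehat F_{\rho,f}=F_{\rho,f}^0-\ell_f\Delta\rho_f$.

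The momentum--energy statement is handled the same way. The $Y$-fluctuation $F_Y^0-f_Y(U_K)-\ell_f\Delta Y$ sums to $\sum_f|f|(F_Y^0-\ell_f\Delta Y)$ by the same geometric closure, and $F_Y^0-\ell_f\Delta Y$ is antisymmetric under exchange and normal reversal. This is exactly the central--LF conservative flux.

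The main obstacle is bookkeeping: checking that the one-sided left-state formulation \eqref{eq:common-FV-scheme} really produces a conservative flux. This needs the closure identity and the careful sign tracking of $\Delta g$ and $F_\rho^0$ under $(L,R,n)\mapsto(R,L,-n)$. Boundary faces are handled by whatever ghost state is prescribed there, which leaves the cellwise identity unchanged.
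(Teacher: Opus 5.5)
Your proposal is correct and follows essentially the same route as the paper: you contract the distortion block with $\w_K$, use the left-endpoint identities of Lemmas~\ref{lem:G-core-companion} and~\ref{lem:G-PLF} to turn each face term into $\widehat F_{\rho,f}/\rho_0-h_K$, and remove the leftover cell term with $\sum_f|f|n_{Kf}=0$, treating the $Y$ block the same way. Your extra check that the reconstructed face flux is odd under $(U_L,U_R,n)\mapsto(U_R,U_L,-n)$ is not needed for the cellwise identity itself; it makes explicit the single-valuedness that the paper leaves implicit and later uses for the global mass balance.
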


\begin{proof}
For a face $f$ oriented outward from $K$, Lemmas~\ref{lem:G-core-companion}
and~\ref{lem:G-PLF} give
\begin{align}
 \rho_0\w_K:
 \bigl(\mathcal H_{A,f}^{-,G}+D_{A,f}^-\bigr)
 &=\rho_0\bigl(h_{G,f}^t-h_K-\ell_f\Delta g_f\bigr)\notag\\
 &=\widehat F_{\rho,f}-\rho_K\vct_K\cdot n_{Kf}.
 \label{eq:GC-FV-companion-face-proof}
\end{align}
Using $\rho_K=\rho_0\det\A_K$, the exact chain rule
$\dd\rho_K/\dd t=\rho_0\w_K:\dd\A_K/\dd t$, and $\w:S_A=0$, contraction
of the distortion block in \eqref{eq:common-FV-scheme} therefore yields
\begin{equation}
 \frac{\dd\rho_K}{\dd t}
 +\frac1{|K|}\sum_{f\subset\partial K}|f|
 \bigl(\widehat F_{\rho,f}-\rho_K\vct_K\cdot n_{Kf}\bigr)=0.
 \label{eq:GC-FV-companion-intermediate-proof}
\end{equation}
The closed-cell identity
$\sum_{f\subset\partial K}|f|n_{Kf}=0$ cancels the second term inside
the parentheses and proves \eqref{eq:GC-FV-density}.  The same identity,
together with the single-valued central--LF flux in the conservative
$Y$ block, gives the stated momentum and total-energy conservation.
\end{proof}

\begin{remark}[Comparison with Boscheri et al.]
\label{rem:GC-FV-Boscheri-comparison}
Both the present GC-FV method and Boscheri et al.~\cite{Boscheri2024}
recover a conservative determinant law from a reduced GPR system.  The
present construction replaces their jump-quotient flux correction and
additional elementwise production by bounded endpoint projections without
jump quotients.  It thereby provides exact induced density LF dissipation, a
smooth equal-state extension, and the two-state structure used by the
high-order SBP and subcell constructions below.  In Boscheri et al., the
induced density dissipation is proportional to
$\Delta g+\tfrac12\det(\Delta\A)$ rather than $\Delta g$ in general.  Their method,
on the other hand, retains the selected path sum and also enforces a
thermodynamic companion law, whereas the present paper claims only
geometric compatibility.  The two path-sum defects of GC-FV are
\begin{align}
 \mathcal P_A^{V,G}
 &:=
 \mathcal H_A^{-,G}+\mathcal H_A^{+,G}-\mathcal J_A^{\rm lin}
 =R_L^GP_L+R_R^GP_R,
 \label{eq:G-volume-path-defect}\\
 \mathcal P_A^{D,G}
 &:=
 D_A^-+D_A^+
 =-\frac{\ell_f}{2}
 \{a_{\Delta A}(P_L+P_R)+d_{\Delta A}(P_R-P_L)\}.
 \label{eq:PLF-path-defect}
\end{align}
For smooth pairs these are respectively $O(\|\Delta U\|^3)$ and
$O(\ell_f\|\Delta U\|^2)$, as proved in
Appendix~\ref{app:GC-path-estimates}.  Thus, in smooth regions, both defects
vanish at higher order in the state jump.  This local estimate is not a
complete convergence-order analysis.  Across a shock the defects may remain
finite rather than small; nevertheless, none of the numerical experiments
reported below exhibited an instability attributable to path-sum
inconsistency.  The associated trade-offs are detailed in
Remarks~\ref{rem:distortion-density-LF-tradeoff} and
\ref{rem:G-path-regularity}.
\end{remark}

\subsubsection{Wave-speed estimate}
\label{sec:GC-FV-wave-speed}

The LF coefficient $\ell_f=a_f/2$ in \eqref{eq:G-raw-LF-increments}
requires $a_f$ to bound the largest normal signal speed.  A
reference-state estimate is generally insufficient for a non-ideal EOS,
a strongly deformed solid, or an active thermal impulse.  For the mechanical
subsystem we use the classical acoustic-tensor (Christoffel-tensor)
reduction of nonlinear elasticity.  This construction underlies the
characteristic analysis and Godunov solvers in
\cite{TrangensteinColella1991,MillerColella2001,BartonEtAl2009,Barton2010},
the more recent first-order hyperbolic solid formulation
\cite{DiGiusto2024}, and the GPR characteristic analysis of
\cite{Dumbser2016}.  Its derivation for the energy class used here is given
in Appendix~\ref{app:acoustic-tensor}.

Let $\{n,t\}$ be a local orthonormal basis of the computational plane and
let $W_n$ denote the planar acoustic tensor represented in this basis.  The directional
increments $\chi$, $\delta\Sigma_G$, and $\delta\Psi$ appearing below are
defined in \eqref{eq:app-acoustic-increments}.  Once the two columns
$W_n n$ and $W_n t$ have been evaluated, we define
\begin{align}
 \mu_n(W_n)&:=\frac12\left[
 (W_n)_{11}+(W_n)_{22}
 +\sqrt{\bigl((W_n)_{11}-(W_n)_{22}\bigr)^2
       +4(W_n)_{12}^2}\right],
 \label{eq:acoustic-largest-eigenvalue}\\
 a_f[W_n]&:=C_{\rm LF}\max_{a\in\{L,R\}}
 \left(|v_{n,a}|+\sqrt{\mu_{n,a}(W_n)}\right).
 \label{eq:finite-deformation-LF-speed}
\end{align}
Thus no eigendecomposition of the full mechanical Jacobian is required.
For the two EOS classes used in Section~\ref{sec:model}, the formulas used to
assemble $W_n$ are as follows.
\begin{itemize}
\item \emph{Ideal or stiffened gas.}
From \eqref{eq:stiffened-gas-EOS},
\begin{equation}
 c_{0,{\rm SG}}^2=\frac{\gamma(p+p_\infty)}{\rho},
 \qquad
 W_n^{\rm SG}\xi=c_{0,{\rm SG}}^2\chi n
 +c_s^2(\chi\Sigma_G+\delta\Sigma_G)n.
 \label{eq:acoustic-ideal-action}
\end{equation}
Substitution into
\eqref{eq:acoustic-largest-eigenvalue}--\eqref{eq:finite-deformation-LF-speed}
gives $a_f^{\rm SG}$.  At the undeformed reference state,
$\mu_n=c_{0,{\rm SG}}^2+4c_s^2/3$.

\item \emph{Barton--Romenski material.}
For \eqref{eq:Barton-volumetric-energy}--\eqref{eq:Barton-shear-energy},
\begin{align}
 c_{0,{\rm BR}}^2={}&
 \frac{K_0}{\alpha_B}
 \left[(2\alpha_B+1)\nu^{2\alpha_B}
 -(\alpha_B+1)\nu^{\alpha_B}\right]
 \notag\\
 &+\gamma_B(\gamma_B+1)c_vT_0\nu^{\gamma_B}
 \left(e^{s/c_v}-1\right),
 \label{eq:Barton-acoustic-c0}
 \\[-1ex]
 W_n^{\rm BR}\xi={}&\left[c_{0,{\rm BR}}^2\chi
 +\frac{\beta_Bb_0^2\nu^{\beta_B}}4
 \bigl((1+\beta_B)\chi\Psi+\delta\Psi\bigr)\right]n
 \notag\\
 &+b_0^2\nu^{\beta_B}
 \bigl((1+\beta_B)\chi\Sigma_G+\delta\Sigma_G\bigr)n.
 \label{eq:acoustic-Barton-action}
\end{align}
Substitution of $W_n^{\rm BR}$ into
\eqref{eq:acoustic-largest-eigenvalue}--\eqref{eq:finite-deformation-LF-speed}
gives $a_f^{\rm BR}$.  At the undeformed thermodynamic reference state
$(\A,\nu,s)=(I_3,1,0)$, one has
$\mu_n=c_{L,0}^2$.
Unlike the gas formulas, the general finite-deformation result cannot be
reduced to one scalar sound speed because the deformed state is
directionally anisotropic; nevertheless,
\eqref{eq:acoustic-Barton-action} and
\eqref{eq:acoustic-largest-eigenvalue} are a closed-form $2\times2$
evaluation.
\end{itemize}

\paragraph{Additional estimate for the $J$-active formulation}
The preceding construction is complete for both mechanical EOS classes and
does not involve the thermal impulse.  When the latter is
active, $W_n$ no longer contains the entire spectrum because the thermal
and longitudinal modes are coupled.
Dumbser et al.~\cite{Dumbser2016} derived exact coupled eigenvalues at the
reference state $A=I_3$, $J=0$; Jackson
\cite{Jackson2017,JacksonNikiforakis2019} instead formulated the Rusanov
coefficient through the spectral radius of the full normal quasilinear
matrix; and Boscheri et al.~\cite{Boscheri2024} explicitly adopted the
heuristic estimate
$\sqrt{\gamma p/\rho+4c_s^2/3+c_h^2}$.  To avoid a full eigendecomposition
at every face while retaining finite-$J$ protection, we use the calibrated
single-parameter estimate below only for the ideal/stiffened-gas $J$-active
configurations considered in this paper:
\begin{equation}
 \boxed{
 a_f^J=C_{\rm LF}\max_{a\in\{L,R\}}
 \left[|v_{n,a}|+
 \sqrt{\mu_{n,a}(W_n^{\rm SG})+c_h^2\frac{T_a}{c_v}
 +\kappa_Jc_h^2|\Jv_a|_2^2}\right],
 \qquad \kappa_J\ge0 .}
 \label{eq:active-J-LF-speed}
\end{equation}
Here $|\Jv_a|_2=(\Jv_a\cdot\Jv_a)^{1/2}$ is the Euclidean norm of the
thermal-impulse vector at side $a\in\{L,R\}$.  The parameter $\kappa_J$
accounts for the finite-$J$ coupling.  The term $c_h^2T_a/c_v$ is the
reference-state thermal-wave contribution supplied by the linearized
constant-$c_v$ GPR closure; it is not an empirical correction.  The term
$\kappa_Jc_h^2|\Jv_a|_2^2$ is a problem-specific empirical safety
correction, not part of the exact mechanical acoustic-tensor reduction.
This estimate is used only with the constant-$c_v$ ideal/stiffened-gas
closure.  For a general caloric EOS, the thermal contribution must be
bounded from the corresponding full normal quasilinear matrix.  Here
$\kappa_J$ is a problem-dependent thermal safety factor, while
$C_{\rm LF}\ge1$ remains the common LF factor; setting $c_h=0$ recovers
\eqref{eq:finite-deformation-LF-speed}.  We use $\kappa_J=10$ in all
$J$-active tests, solely as a reproducible working value rather than a
universal spectral bound.

\section{High-order nodal DGSEM extension}
\label{sec:high-order}

The GC-FV construction supplies a regular two-state fluctuation pair and an
exact determinant companion identity.  We first insert this pair into the
standard tensor-product summation-by-parts (SBP) flux-differencing operator.
This gives the high-order geometric-compatible DGSEM scheme, denoted
\textbf{GC-DG}, in the form in which its geometric compatibility is most
transparent.  We then derive an algebraically equivalent subcell
representation.  This representation
does not define another spatial discretization; it exposes one high-order
flux or directed fluctuation at every LGL subcell interface and is therefore
the form needed by the limiter in Section~\ref{sec:gpr-subcell}.

\subsection{Nodal space and SBP operators}
\label{sec:DG-SBP}

Partition $\Omega$ into affine rectangles $K$ and map the reference square
$\widehat K=[-1,1]^2$ to $K$ by
\begin{equation}
 \Phi_K(\xi_1,\xi_2)
 =\left(x_K+\frac{h_{K,x}}2\xi_1,
 y_K+\frac{h_{K,y}}2\xi_2\right).
 \label{eq:J-map}
\end{equation}
With $\mathbb P_N$ denoting the one-dimensional polynomials of degree at
most $N$, set
\begin{equation}
 \mathbb Q_N(\widehat K)=\mathbb P_N([-1,1])\otimes\mathbb P_N([-1,1]),
 \qquad
 V_h^N=\{v\in L^2(\Omega):v|_K\circ\Phi_K\in\mathbb Q_N(\widehat K)\}.
 \label{eq:J-DG-space}
\end{equation}
Let $\{\xi_i,\omega_i\}_{i=0}^N$ be the Legendre--Gauss--Lobatto (LGL)
rule and $l_i$ its nodal Lagrange basis.  The same nodes $\xi_i$ are used
in both reference directions.  On each element,
\begin{equation}
 U_h|_K\circ\Phi_K
 =\sum_{i,j=0}^NU_{ij}^Kl_i(\xi_1)l_j(\xi_2),
 \quad D^{\rm SBP}_{im}=l_m'(\xi_i),
 \quad M=\operatorname{diag}(\omega_0,\ldots,\omega_N),
 \quad Q=MD^{\rm SBP}.
 \label{eq:J-nodal}
\end{equation}
The diagonal-norm SBP identities are
\begin{equation}
 \boxed{Q+Q^T=B_{\rm SBP},\qquad Q\mathbf1=0,
 \qquad B_{\rm SBP}=\operatorname{diag}(-1,0,\ldots,0,1).}
 \label{eq:J-SBP}
\end{equation}
We use $\lambda_{K,x}=2/h_{K,x}$,
$\lambda_{K,y}=2/h_{K,y}$ and
$J_K=h_{K,x}h_{K,y}/4$.  The face Jacobian is $J_f=h_{K,y}/2$ on a
vertical face and $J_f=h_{K,x}/2$ on a horizontal face.

\subsection{The two-dimensional high-order GC-DG scheme}
\label{sec:GC-DG}

We now assemble the high-order DGSEM scheme directly from the two-state
fluctuations constructed for GC-FV.  The underlying step is the classical
SBP flux-differencing construction, originating in high-order SBP finite
differences and split-form nodal DGSEM
\cite{FisherCarpenter2013,CarpenterFisherNielsenFrankel2014,
GassnerWintersKopriva2016,Chan2018}.  To recall it briefly, for the scalar
law $u_t+f(u)_x=0$, let $f^\star$ be a symmetric consistent two-point flux
and write its nondissipative FV fluctuation as
$\mathcal D_0^-(u_L,u_R)=f^\star(u_L,u_R)-f(u_L)$.  On the LGL nodes of one
element, its SBP lift is
\begin{equation}
 \mathcal L_i^H[\mathcal D_0^-]
 :=2\sum_{m=0}^ND^{\rm SBP}_{im}\mathcal D_0^-(u_i,u_m)
 =2\sum_{m=0}^ND^{\rm SBP}_{im}f^\star(u_i,u_m),
 \label{eq:G-scalar-SBP-lift}
\end{equation}
where the second equality follows from $D^{\rm SBP}\mathbf1=0$.  Symmetry of
$f^\star$ and $Q+Q^T=B_{\rm SBP}$ give
\begin{equation}
 \sum_{i=0}^N\omega_i\mathcal L_i^H[\mathcal D_0^-]
 =f(u_N)-f(u_0).
 \label{eq:G-scalar-SBP-balance}
\end{equation}
Thus all nodal-pair interactions telescope to the element endpoints.  A
boundary \emph{simultaneous approximation term} (SAT) then replaces the
physical endpoint flux by the complete dissipative FV numerical flux.  The
SAT is the standard SBP boundary penalty that weakly imposes the numerical
interface coupling while preserving the discrete integration-by-parts
balance \cite{FisherCarpenter2013,CarpenterFisherNielsenFrankel2014}.  In
this precise sense, the high-order
operator is obtained by applying the SBP lift to the nondissipative FV
pair, while retaining the full FV operator at element interfaces.

We apply exactly this assembly rule blockwise to GC-FV.  No new two-state
function is introduced: for two states separated by a face with oriented
normal $n$, the volume building block is the nondissipative part of the
GC-FV pair,
\begin{equation}
 \begin{aligned}
 \mathcal K^{-,G}(U_L,U_R;n)
 &=\left(F_Y^0-f_Y(U_L;n),\,
          \mathcal H_A^{-,G},\,\mathcal H_{J,0}^-\right)^T,\\
 \mathcal K^{+,G}(U_L,U_R;n)
 &=\left(f_Y(U_R;n)-F_Y^0,\,
          \mathcal H_A^{+,G},\,\mathcal H_{J,0}^+\right)^T.
 \end{aligned}
 \label{eq:G-full-pair}
\end{equation}
Here $F_Y^0$ is defined in \eqref{eq:G-FY0},
$\mathcal H_A^{\pm,G}$ is the projected geometric pair
\eqref{eq:G-volume-pair}, and $\mathcal H_{J,0}^\pm$ is the unmodified
straight-path pair
\eqref{eq:G-J-baseline-minus}--\eqref{eq:G-J-baseline-plus}.  Thus the
minimum-change projection to the prescribed density flux acts only on
$A$; the $J$ equation retains
the standard path-conservative discretization.  At an element face we use
the same complete dissipative GC-FV operator
$\widehat{\mathcal K}_G^-$ in \eqref{eq:GC-FV-left}, evaluated with the
local outward normal.

Applying \eqref{eq:G-scalar-SBP-lift} on every $x$- and $y$-directed LGL
line, and adding the GC-FV boundary operators as SATs, gives the
two-dimensional strong-form GC-DG equation at node $(i,j)$:
\begin{align}
 \frac{\dd U_{ij}^K}{\dd t}
 &+2\lambda_{K,x}\sum_{m=0}^ND^{\rm SBP}_{im}
 \mathcal K^{-,G}(U_{ij}^K,U_{mj}^K;e_1)
 +2\lambda_{K,y}\sum_{m=0}^ND^{\rm SBP}_{jm}
 \mathcal K^{-,G}(U_{ij}^K,U_{im}^K;e_2)
 \nonumber\\
 &+\frac{\lambda_{K,x}}{\omega_i}\left[
 \delta_{i0}\widehat{\mathcal K}_G^-(U_{0j}^K,U_{0j}^{K,\rm ext};-e_1)
 +\delta_{iN}\widehat{\mathcal K}_G^-(U_{Nj}^K,U_{Nj}^{K,\rm ext};e_1)
 \right]
 \nonumber\\
 &+\frac{\lambda_{K,y}}{\omega_j}\left[
 \delta_{j0}\widehat{\mathcal K}_G^-(U_{i0}^K,U_{i0}^{K,\rm ext};-e_2)
 +\delta_{jN}\widehat{\mathcal K}_G^-(U_{iN}^K,U_{iN}^{K,\rm ext};e_2)
 \right]
 =S_U(U_{ij}^K).
 \label{eq:GC-DGSEM}
\end{align}
Here $K$ labels the current element; $i,j\in\{0,\ldots,N\}$ label its
tensor-product LGL node, and $m$ is the linewise summation index.  More
precisely, $U_{ij}^K$ is the nodal value of the reduced state $U$ in
\eqref{eq:model-state} at $\Phi_K(\xi_i,\xi_j)$, with the element map,
derivative entries $D^{\rm SBP}_{im}$, and quadrature weights $\omega_i$ defined in
\eqref{eq:J-map}--\eqref{eq:J-nodal}.  The geometric factors are
$\lambda_{K,x}=2/h_{K,x}$ and $\lambda_{K,y}=2/h_{K,y}$,
$e_1=(1,0)^T$ and $e_2=(0,1)^T$ are the Cartesian unit vectors, and
$\delta_{ab}$ is the Kronecker symbol.  The volume fluctuation
$\mathcal K^{-,G}$ is the nondissipative GC-FV pair
\eqref{eq:G-full-pair}, whereas $\widehat{\mathcal K}_G^-$ is the complete
dissipative GC-FV face operator \eqref{eq:GC-FV-left}.

The trace $U_{ij}^{K,\rm ext}$ is taken from the neighboring element at the
same LGL face node on a conforming mesh, or from the prescribed boundary
state at a physical boundary.  The signs $-e_1,e_1,-e_2,e_2$ are the local
outward normals on the left, right, bottom, and top faces, respectively;
at a corner both incident simultaneous-approximation terms are included.
Finally, $S_U=(0_Y,S_A,-\betaJ/\theta_2)^T$ is the same relaxation source
as in the GC-FV update \eqref{eq:common-FV-scheme}.  Thus the first two sums
are the $x$- and $y$-directed SBP lifts of the nondissipative GC-FV pair,
while the four Kronecker-supported terms insert the complete GC-FV operator
only at element faces.  The LF correction is not placed in the volume
because the entries of the SBP differentiation matrix have both signs.

Because the high-order DGSEM scheme is obtained by applying the SBP lift
to the GC-FV pair while retaining its complete face operator, contracting
its distortion equation produces a high-order discrete density equation.
Here and below,
$f_\rho(U;n):=\rho\vct\cdot n=\rho_0h(U;n)$ denotes the physical density
flux in the normal direction $n$.
The two identities required below are
\begin{align}
 \rho_0\w_L:\mathcal H_A^{-,G}(U_L,U_R;n)
 &=F_\rho^0(U_L,U_R;n)-f_\rho(U_L;n),
 \label{eq:GC-DG-volume-density-contraction}\\
 \rho_0\w_L:\widehat{\mathcal H}_{A}^-(U_L,U_R;n)
 &=\widehat F_\rho(U_L,U_R;n)-f_\rho(U_L;n),
 \qquad \widehat F_\rho=F_\rho^0-\ell_f\Delta\rho,
 \label{eq:GC-DG-density-face-flux}
\end{align}
where \eqref{eq:GC-DG-volume-density-contraction} is the left-endpoint
identity in Lemma~\ref{lem:G-core-companion}, while
\eqref{eq:GC-DG-density-face-flux} additionally uses
Lemma~\ref{lem:G-PLF}.  Define the density SAT at node $(i,j)$ by
\begin{align}
 \mathcal B_{\rho,ij}^K
 &:=\frac{\lambda_{K,x}}{\omega_i}\left\{
 \delta_{i0}\left[\widehat F_\rho(U_{0j}^K,U_{0j}^{K,\rm ext};-e_1)
                  -f_\rho(U_{0j}^K;-e_1)\right]\right.\nonumber\\
 &\hspace{44mm}\left.
 +\delta_{iN}\left[\widehat F_\rho(U_{Nj}^K,U_{Nj}^{K,\rm ext};e_1)
                  -f_\rho(U_{Nj}^K;e_1)\right]\right\}\nonumber\\
 &\quad+\frac{\lambda_{K,y}}{\omega_j}\left\{
 \delta_{j0}\left[\widehat F_\rho(U_{i0}^K,U_{i0}^{K,\rm ext};-e_2)
                  -f_\rho(U_{i0}^K;-e_2)\right]\right.\nonumber\\
 &\hspace{44mm}\left.
 +\delta_{jN}\left[\widehat F_\rho(U_{iN}^K,U_{iN}^{K,\rm ext};e_2)
                  -f_\rho(U_{iN}^K;e_2)\right]\right\}.
 \label{eq:GC-DG-density-SAT}
\end{align}

\begin{theorem}[GC-DG determinant companion law]
\label{thm:GC-DG-compatibility}
Assume an affine conforming Cartesian mesh, admissible nodal states,
constant $\rho_0$, and the isochoric source condition
$\w:S_A=0$ in \eqref{eq:isochoric-source}.  Then the distortion block of
the high-order DGSEM scheme \eqref{eq:GC-DGSEM} induces, at every LGL node,
\begin{equation}
 \boxed{
 \frac{\dd\rho_{ij}^K}{\dd t}
 +2\lambda_{K,x}\sum_{m=0}^ND^{\rm SBP}_{im}
 F_\rho^0(U_{ij}^K,U_{mj}^K;e_1)
 +2\lambda_{K,y}\sum_{m=0}^ND^{\rm SBP}_{jm}
 F_\rho^0(U_{ij}^K,U_{im}^K;e_2)
 +\mathcal B_{\rho,ij}^K=0,}
 \label{eq:GC-DG-density}
\end{equation}
where $\rho_{ij}^K=\rho_0\det\A_{ij}^K$ and
$\mathcal B_{\rho,ij}^K$ is defined by
\eqref{eq:GC-DG-density-SAT}.  Thus the induced density equation uses the
same symmetric volume flux $F_\rho^0$ and complete face flux
$\widehat F_\rho$ prescribed by the GC-FV construction.
\end{theorem}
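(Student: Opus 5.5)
The proof is a nodewise contraction argument, parallel to the proof of Theorem~\ref{thm:GC-FV}. At node $(i,j)$ of element $K$ we take the Frobenius contraction of the distortion block of \eqref{eq:GC-DGSEM} with $\w_{ij}^K=\cof\A_{ij}^K$ and multiply by $\rho_0$. Since $\rho_{ij}^K=\rho_0\det\A_{ij}^K$ and $\rho_0$ is constant, the exact chain rule gives $\dd\rho_{ij}^K/\dd t=\rho_0\w_{ij}^K:\dd\A_{ij}^K/\dd t$. The source contributes $\rho_0\w_{ij}^K:S_A(U_{ij}^K)=0$ by \eqref{eq:isochoric-source}. Every remaining term in the distortion block is linear in a one-sided fluctuation whose \emph{left} state is $U_{ij}^K$. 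This holds for the $x$-line volume terms $\mathcal H_A^{-,G}(U_{ij}^K,U_{mj}^K;e_1)$, the $y$-line volume terms, and the four SAT terms $\widehat{\mathcal H}_A^-(U_{ij}^K,U^{K,\rm ext};\pm e_k)$. The contraction therefore acts with the left-endpoint cofactor throughout, which is exactly the form of identities \eqref{eq:GC-DG-volume-density-contraction}--\eqref{eq:GC-DG-density-face-flux}.

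The volume terms are handled first. Applying \eqref{eq:GC-DG-volume-density-contraction} along the $x$-line gives
\[
 2\lambda_{K,x}\sum_{m}D^{\rm SBP}_{im}\rho_0\w_{ij}^K:\mathcal H_A^{-,G}(U_{ij}^K,U_{mj}^K;e_1)
 =2\lambda_{K,x}\sum_m D^{\rm SBP}_{im}\bigl[F_\rho^0(U_{ij}^K,U_{mj}^K;e_1)-f_\rho(U_{ij}^K;e_1)\bigr].
\]
The subtracted term does not depend on $m$, so it vanishes by $D^{\rm SBP}\mathbf 1=0$ from \eqref{eq:J-SBP}. The same argument handles the $y$-line with $e_2$. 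The $m=i$ diagonal entry needs a word of justification, since there the fluctuation is evaluated at equal states. Lemma~\ref{lem:G-core-companion} supplies the smooth equal-state extension, and the contraction identity holds there by continuity: both sides are continuous and agree off the diagonal. At equal states the contraction value is $F_\rho^0(U,U)-f_\rho(U)=0$, so the diagonal term is consistent.

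The SAT terms come next. Each is $\lambda_{K,x}/\omega_i$ (or $\lambda_{K,y}/\omega_j$) times $\widehat{\mathcal H}_A^-$ evaluated with the local outward normal. Lemma~\ref{lem:G-PLF} combined with Lemma~\ref{lem:G-core-companion}, in the form \eqref{eq:GC-DG-density-face-flux}, converts each contracted term into $\widehat F_\rho(U^K,U^{K,\rm ext};n)-f_\rho(U^K;n)$ with that same normal $n$. Summing the four Kronecker-supported contributions reproduces $\mathcal B^K_{\rho,ij}$ term by term, as defined in \eqref{eq:GC-DG-density-SAT}; at corners both incident SATs appear on each side. Collecting the volume and SAT contributions yields \eqref{eq:GC-DG-density}.

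The argument contains no genuine analytic obstacle. The one point requiring care is bookkeeping of orientation: every fluctuation in \eqref{eq:GC-DGSEM} must be of "$-$" type with the node's own state as left argument, so that only $\w_{ij}^K$ appears. At physical boundaries, the exterior state must be admissible ($g>0$) so that the projectors $P_a$ in the face operator are well defined. Note also that the proof is purely nodal and uses only $D^{\rm SBP}\mathbf 1=0$. The full SBP property $Q+Q^T=B_{\rm SBP}$ is not needed here; it would be needed only for the subsequent global mass balance obtained by summing against the weights $\omega_i\omega_j J_K$.
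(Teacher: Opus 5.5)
Your proposal is correct and follows the paper's proof essentially step by step. Both use the nodal chain rule together with $\w:S_A=0$, then the left-endpoint identity \eqref{eq:GC-DG-volume-density-contraction} combined with $D^{\rm SBP}\mathbf 1=0$ on each $x$- and $y$-line, and finally \eqref{eq:GC-DG-density-face-flux} to convert the four SAT fluctuations into $\mathcal B_{\rho,ij}^K$.

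Your continuity argument for the diagonal entry $m=i$ is harmless but unnecessary. Since \eqref{eq:G-volume-pair} contains no jump quotient, the constraint $\w_L:\mathcal H_A^{-,G}=h_G^t-h_L$ holds algebraically at equal states as well.
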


\begin{proof}
At every nodal point, the determinant chain rule and the isochoric source
condition give
\begin{equation}
 \frac{\dd\rho_{ij}^K}{\dd t}
 =\rho_0\w_{ij}^K:\frac{\dd\A_{ij}^K}{\dd t},
 \qquad
 \rho_0\w_{ij}^K:S_A(U_{ij}^K)=0.
 \label{eq:GC-DG-nodal-chain-rule}
\end{equation}
For the $x$-directed volume term,
\begin{align}
 &2\lambda_{K,x}\sum_{m=0}^ND^{\rm SBP}_{im}
 \rho_0\w_{ij}^K:\mathcal H_A^{-,G}
 (U_{ij}^K,U_{mj}^K;e_1)\nonumber\\
 &\qquad=
 2\lambda_{K,x}\sum_{m=0}^ND^{\rm SBP}_{im}
 \left[F_\rho^0(U_{ij}^K,U_{mj}^K;e_1)
       -f_\rho(U_{ij}^K;e_1)\right]\nonumber\\
 &\qquad=
 2\lambda_{K,x}\sum_{m=0}^ND^{\rm SBP}_{im}
 F_\rho^0(U_{ij}^K,U_{mj}^K;e_1),
 \label{eq:GC-DG-x-volume-contraction}
\end{align}
where the first equality uses
\eqref{eq:GC-DG-volume-density-contraction} and the second uses
$D^{\rm SBP}\mathbf1=0$.  The same calculation on each $y$-line gives the second
volume term in \eqref{eq:GC-DG-density}.  Finally,
\eqref{eq:GC-DG-density-face-flux} converts the four face fluctuations in
\eqref{eq:GC-DGSEM} exactly into the density SAT
\eqref{eq:GC-DG-density-SAT}.  Substitution into
\eqref{eq:GC-DG-nodal-chain-rule} proves
\eqref{eq:GC-DG-density} at every nodal point.
\end{proof}

The conservative momentum and total-energy blocks retain their standard
DGSEM balances.  Moreover, $D^{\rm SBP}\mathbf1=0$ and the equal-state limits of
the volume and face pairs imply constant preservation in the absence of
sources.  Appendix~\ref{app:GC-DG-proof} shows that the companion projection
is smooth and nonsingular at equal states and that its correction vanishes
quadratically with the state jump.  It therefore introduces no singular or
leading-order perturbation that would be incompatible with the high-order
SBP construction.  A proof and formal convergence-order analysis of the
complete nonlinear GC-DG spatial operator are beyond the scope of this
paper.  The isentropic-vortex study in
Section~\ref{sec:isentropic-vortex} provides numerical verification of its
high-order convergence.

\subsection{Equivalent subcell form}
\label{sec:GC-DG-subcell-equivalent}

We now derive an equivalent \emph{subcell form} of the high-order DGSEM
scheme \eqref{eq:GC-DGSEM}.  The purpose of this rewrite is to expose one
high-order flux or directed fluctuation at every LGL subcell interface, so
that the subcell limiter can be constructed directly.

Our subcell form, also called a staggered form, follows the SBP
flux-differencing construction of Hennemann et
al.~\cite{Hennemann2021}.  We first use this construction to rewrite the
conservative blocks in an algebraically equivalent subcell form.  We then
follow Rueda-Ram\'irez and Gassner (RRG)~\cite{RuedaRamirezGassner2024} to obtain
the corresponding form for the nonconservative blocks.  The latter step
directly yields the subcell form of the $J$ equation but produces a raw distortion form that
does not, in general, retain the determinant companion law.  We therefore
apply the minimum-change projection introduced in
\eqref{eq:G-volume-optimization}--\eqref{eq:G-volume-constraints} to modify
the distortion subcell fluctuations.  The
two equivalence results at the end of this subsection prove that the final
subcell form is algebraically identical to the original GC-DG scheme
\eqref{eq:GC-DGSEM}.

\subsubsection{Subcell form of the conservative terms}
\label{sec:GC-DG-conservative-subcell}

The conservative variables are $Y=(\m,\mathcal E)^T$.  We also apply the
same algebra to the auxiliary density equation, although density is not an
independently evolved variable.  For compactness, let
$\zeta\in\{Y,\rho\}$, and denote its symmetric consistent volume flux by
\begin{equation}
 F_\zeta^{\star,d}(U_L,U_R)=
 \begin{cases}
 F_Y^0(U_L,U_R;e_d),&\zeta=Y,\\
 F_\rho^0(U_L,U_R;e_d),&\zeta=\rho,
 \end{cases}
 \qquad d\in\{x,y\}.
 \label{eq:GC-DG-conservative-volume-flux}
\end{equation}
Here $d\in\{x,y\}$ selects the normal $e_x=e_1$ or $e_y=e_2$,
$F_Y^0$ and $F_\rho^0$ are the GC-FV targets defined in
\eqref{eq:G-FY0} and \eqref{eq:G-central-mass-flux}, respectively, and
the superscript $\star$ emphasizes that the flux is a symmetric two-point
volume quantity.  The label $H$ below denotes the high-order operator.
The corresponding element-boundary flux is denoted by
$\widehat F_\zeta^d$.  Its two blocks are
\begin{equation}
 \widehat F_Y^d=F_Y^0-\ell_f\Delta Y,
 \qquad
 \widehat F_\rho^d=F_\rho^0-\ell_f\Delta\rho,
 \label{eq:GC-DG-conservative-boundary-flux}
\end{equation}
with all quantities evaluated for the two face states and normal $e_d$.

Fix an element $K$ and suppress its superscript on the nodal states.  On
an $x$-line with transverse index $j$, the
integrated flux-differencing residual inherited directly from
\eqref{eq:GC-DGSEM} is
\begin{align}
 \mathcal R_{\zeta,ij}^{H,x}
 ={}&2\sum_{m=0}^NQ_{im}
 F_\zeta^{\star,x}(U_{ij},U_{mj})
 +\delta_{iN}(\widehat F_{\zeta,N+1/2,j}^{x}-f_{\zeta,ij}^{x})
 -\delta_{i0}(\widehat F_{\zeta,-1/2,j}^{x}-f_{\zeta,ij}^{x}).
 \label{eq:GC-DG-conservative-residual-x}
\end{align}
Similarly, on a $y$-line with transverse index $i$,
\begin{align}
 \mathcal R_{\zeta,ij}^{H,y}
 ={}&2\sum_{m=0}^NQ_{jm}
 F_\zeta^{\star,y}(U_{ij},U_{im})
 +\delta_{jN}(\widehat F_{\zeta,i,N+1/2}^{y}-f_{\zeta,ij}^{y})
 -\delta_{j0}(\widehat F_{\zeta,i,-1/2}^{y}-f_{\zeta,ij}^{y}).
 \label{eq:GC-DG-conservative-residual-y}
\end{align}
Here $\mathcal R_{\zeta,ij}^{H,d}$ is the quadrature-integrated directional
residual, $f_{\zeta,ij}^d:=f_\zeta(U_{ij};e_d)$ is the corresponding physical
flux, and the indices $-1/2$ and $N+1/2$ denote the left and right
element boundaries on the current coordinate line.  The matrix
$Q=MD^{\rm SBP}$ and the SBP identity are defined in \eqref{eq:J-SBP}; here
$S_{\rm SBP}:=Q-Q^T$ denotes its skew part.  Using that identity, consistency of
$F_\zeta^{\star,d}$ combines the
diagonal $B_{\rm SBP}$ contribution with the SAT in
\eqref{eq:GC-DG-conservative-residual-x}--
\eqref{eq:GC-DG-conservative-residual-y}.  For example,
\begin{equation}
 \mathcal R_{\zeta,ij}^{H,x}
 =\sum_{m=0}^N(S_{\rm SBP})_{im}F_\zeta^{\star,x}(U_{ij},U_{mj})
 -\delta_{i0}\widehat F_{\zeta,-1/2,j}^{x}
 +\delta_{iN}\widehat F_{\zeta,N+1/2,j}^{x}.
 \label{eq:GC-DG-conservative-skew-x}
\end{equation}

For $i,j=0,\ldots,N-1$, define the internal subcell fluxes
\begin{align}
 \overline F_{\zeta,i+1/2,j}^{H,x}
 &:=\sum_{\ell=0}^{i}\sum_{m=0}^N
 (S_{\rm SBP})_{\ell m}F_\zeta^{\star,x}(U_{\ell j},U_{mj}),
 \label{eq:GC-DG-conservative-prefix-x}\\
 \overline F_{\zeta,i,j+1/2}^{H,y}
 &:=\sum_{\ell=0}^{j}\sum_{m=0}^N
 (S_{\rm SBP})_{\ell m}F_\zeta^{\star,y}(U_{i\ell},U_{im}).
 \label{eq:GC-DG-conservative-prefix-y}
\end{align}
The overbarred quantities are the high-order subcell fluxes at the
interfaces between consecutive LGL nodes; at the four element boundaries
they are set equal to the corresponding $\widehat F_\zeta^d$.  Subtracting
two successive prefix sums gives, including the boundary nodes,
\begin{align}
 \mathcal R_{\zeta,ij}^{H,x}
 &=\overline F_{\zeta,i+1/2,j}^{H,x}
  -\overline F_{\zeta,i-1/2,j}^{H,x},
 \label{eq:GC-DG-conservative-telescope-x}\\
 \mathcal R_{\zeta,ij}^{H,y}
 &=\overline F_{\zeta,i,j+1/2}^{H,y}
  -\overline F_{\zeta,i,j-1/2}^{H,y}.
 \label{eq:GC-DG-conservative-telescope-y}
\end{align}
Consequently the conservative part of the original GC-DG equation is
already, without approximation,
\begin{equation}
 \frac{\dd Y_{ij}}{\dd t}
 =-\frac{\lambda_{K,x}}{\omega_i}
 (\overline F_{Y,i+1/2,j}^{H,x}-\overline F_{Y,i-1/2,j}^{H,x})
 -\frac{\lambda_{K,y}}{\omega_j}
 (\overline F_{Y,i,j+1/2}^{H,y}-\overline F_{Y,i,j-1/2}^{H,y}).
 \label{eq:GC-DG-Y-prefix}
\end{equation}
Equations \eqref{eq:GC-DG-conservative-prefix-x}--
\eqref{eq:GC-DG-conservative-telescope-y} prove the equivalence for every
conservative block; no separate equivalence argument is needed later.

\subsubsection{Subcell form of the nonconservative terms}
\label{sec:GC-DG-nonconservative-subcell}

For a nonconservative block $Z$, the construction of
Rueda-Ram\'irez and Gassner~\cite{RuedaRamirezGassner2024} applies whenever
the two-point volume term admits the local-times-symmetric factorization
\begin{equation}
 \Phi_{Z,km}^{\star,d}
 =\Phi_{Z,k}^{\rm loc,d}\circ\Phi_{Z,km}^{\rm sym,d},
 \qquad
 \Phi_{Z,km}^{\rm sym,d}=\Phi_{Z,mk}^{\rm sym,d}.
 \label{eq:GC-DG-RRG-factorization}
\end{equation}
Together with the symmetric conservative part $F_Z^{\star,d}$, the choices
below satisfy the diagonal consistency
$F_Z^{\star,d}(U,U)+\Phi_Z^{\rm loc,d}(U)\circ
\Phi_Z^{\rm sym,d}(U,U)=F_Z(U;e_d)$, so the rewritten operator has the
physical equal-state limit of the original nonconservative block.  Here
$Z\in\{\A,\Jv\}$, $d\in\{x,y\}$ is the coordinate direction, and
$k,m\in\{0,\ldots,N\}$ are node indices on the current line.  The factor
$\Phi_{Z,k}^{\rm loc,d}$ depends only on the receiving state $U_k$,
$\Phi_{Z,km}^{\rm sym,d}$ is symmetric in the two states, and $\circ$
denotes the scalar, matrix--vector, or tensor action prescribed by that
block.  We write
$F_{Z,km}^{\star,d}:=F_Z^{\star,d}(U_k,U_m)$.  For an internal subcell interface adjacent to
the receiving node $k$, put
$\iota=\min(k,q)$, where $q\in\{k-1,k+1\}$, and define
\begin{equation}
 \mathfrak G_{Z,(k,q)}^{H,d}
 =\sum_{\ell=0}^{\iota}\sum_{m=0}^N
 (S_{\rm SBP})_{\ell m}F_{Z,\ell m}^{\star,d}
 +\Phi_{Z,k}^{\rm loc,d}\circ
  \sum_{\ell=0}^{\iota}\sum_{m=0}^N
 (S_{\rm SBP})_{\ell m}\Phi_{Z,\ell m}^{\rm sym,d}.
 \label{eq:GC-DG-RRG-gamma}
\end{equation}
Here
$F_{Z,\ell m}^{\star,d}:=F_Z^{\star,d}(U_\ell,U_m)$ and
$\Phi_{Z,\ell m}^{\rm sym,d}:=
\Phi_Z^{\rm sym,d}(U_\ell,U_m)$.  Thus
$\mathfrak G_{Z,(k,q)}^{H,d}$ is the directed high-order interface
quantity seen by node $k$ at the subcell interface between $k$ and its
neighbor $q$.  At an element boundary it is chosen so that the directed
fluctuation defined below equals the corresponding $Z$ block of the GC-FV
surface operator $\widehat{\mathcal K}_G^-$ in
\eqref{eq:GC-FV-left}.  In direct analogy with
\eqref{eq:GC-DG-conservative-telescope-x}--
\eqref{eq:GC-DG-conservative-telescope-y}, define the
quadrature-integrated directional residual by
\begin{align}
 \mathcal R_{Z,k}^{H,d}
 &:=\sum_{m=0}^N(S_{\rm SBP})_{km}\left(
 F_{Z,km}^{\star,d}
 +\Phi_{Z,k}^{\rm loc,d}\circ\Phi_{Z,km}^{\rm sym,d}\right)
 -\delta_{k0}\mathfrak G_{Z,(0,-1)}^{H,d}
 +\delta_{kN}\mathfrak G_{Z,(N,N+1)}^{H,d}
 \nonumber\\
 &=\mathfrak G_{Z,(k,k+1)}^{H,d}
  -\mathfrak G_{Z,(k,k-1)}^{H,d}.
 \label{eq:GC-DG-RRG-telescope}
\end{align}
Unlike a conservative flux, the two quantities assigned to the same
subcell interface need not agree, because their local factors are
evaluated at different receiving nodes.  Introducing the one-sided
fluctuations
\begin{equation}
 \Gamma_{Z,k-1/2}^{+,H,d}
 :=F_Z(U_k;e_d)-\mathfrak G_{Z,(k,k-1)}^{H,d},
 \qquad
 \Gamma_{Z,k+1/2}^{-,H,d}
 :=\mathfrak G_{Z,(k,k+1)}^{H,d}-F_Z(U_k;e_d).
 \label{eq:GC-DG-RRG-directed}
\end{equation}
Here $F_Z(U_k;e_d)$ is the physical conservative flux of block $Z$ at
the receiving node.  The superscripts $+$ and $-$ denote the directed
fluctuations assigned to node $k$ from its left and right subcell
interfaces, respectively.  With these definitions,
\eqref{eq:GC-DG-RRG-telescope} becomes
\begin{equation}
 \mathcal R_{Z,k}^{H,d}
 =\Gamma_{Z,k-1/2}^{+,H,d}
  +\Gamma_{Z,k+1/2}^{-,H,d}.
 \label{eq:GC-DG-RRG-fluctuation-form}
\end{equation}
Thus $\mathcal R_{Z,k}^{H,d}$ plays exactly the same role as
$\mathcal R_{\zeta,ij}^{H,d}$ in the conservative rewrite: it is the
directional spatial residual appearing with the corresponding
$x$- or $y$-direction factor in \eqref{eq:GC-DGSEM}.

\begin{remark}[Endpoint allocation and the nonconservative subcell form]
\label{rem:G-endpoint-subcell}
The endpoint allocation selected in
Remark~\ref{rem:G-endpoint-versus-half-split} enters here directly: the
receiving-node factor is evaluated at the corresponding endpoint state,
while the second factor is the same symmetric two-point velocity.  Hence
the straight-path $A$ and $J$ pairs immediately have the form
\eqref{eq:GC-DG-RRG-factorization}.  A standard half-jump allocation may
also admit a geometric-compatible subcell representation, but its
local--symmetric decomposition is less direct and is outside the scope of
this work.
\end{remark}

This standard construction directly yields the thermal-impulse block.
Indeed, with $n=e_d$,
\begin{equation}
 F_J^\star=\overline v_n\,\overline\Jv+\overline T n,
 \qquad
 \Phi_J^{\rm loc}(U;n)\mathbf z
 =\mathcal C_J(\Jv,n)\mathbf z
 :=n(\Jv\cdot\mathbf z)-(n\cdot\mathbf z)\Jv,
 \qquad
 \Phi_J^{\rm sym}=\overline\vct.
 \label{eq:GC-DG-J-RRG-data}
\end{equation}
Here the bars denote arithmetic two-state averages, $v_n=\vct\cdot n$,
and $\mathbf z$ is the vector argument of the local linear operator
$\mathcal C_J$.  These choices identify, respectively, the symmetric
conservative flux, receiving-state local operator, and symmetric factor
required in \eqref{eq:GC-DG-RRG-factorization}.  This is exactly the split
form already represented by
$\mathcal H_{J,0}^\pm$ in \eqref{eq:G-full-pair}.  Applying
\eqref{eq:GC-DG-RRG-gamma} linewise in both directions therefore gives
\begin{equation}
 \frac{\dd\Jv_{ij}}{\dd t}
 =-\frac{\lambda_{K,x}}{\omega_i}
 (\Gamma_{J,i-1/2,j}^{+,H,x}+\Gamma_{J,i+1/2,j}^{-,H,x})
 -\frac{\lambda_{K,y}}{\omega_j}
 (\Gamma_{J,i,j-1/2}^{+,H,y}+\Gamma_{J,i,j+1/2}^{-,H,y})
 -\frac{(\betaJ)_{ij}}{\theta_{2,ij}}.
 \label{eq:GC-DG-J-subcell}
\end{equation}
This is an exact rewrite of the $J$ block of \eqref{eq:GC-DGSEM}; no
projection to a prescribed density flux is required.

The distortion block has the analogous factorization
\begin{equation}
 F_A^\star=\overline v_n\,\overline\A,
 \qquad
 \Phi_A^{\rm loc}(U;n)\mathbf z
 =\mathcal C_A(\A,n)\mathbf z
 :=(\A\mathbf z)\otimes n-(n\cdot\mathbf z)\A,
 \qquad
 \Phi_A^{\rm sym}=\overline\vct.
 \label{eq:GC-DG-A-factorization}
\end{equation}
As in \eqref{eq:GC-DG-J-RRG-data}, the bars are arithmetic two-state
averages and $\mathbf z$ is the vector argument of the receiving-state linear
operator $\mathcal C_A$.  Thus the three displayed quantities are the
symmetric conservative flux, local operator, and symmetric factor in
\eqref{eq:GC-DG-RRG-factorization}.
Hence \eqref{eq:GC-DG-RRG-gamma} gives the raw, unprojected directed
fluctuations $\Gamma_{A,k\pm1/2}^{\mp,H,d}$.  Their two-dimensional
subcell form is
\begin{equation}
 \left(\frac{\dd\A_{ij}}{\dd t}\right)_{\rm raw}
 =-\frac{\lambda_{K,x}}{\omega_i}
 (\Gamma_{A,i-1/2,j}^{+,H,x}+\Gamma_{A,i+1/2,j}^{-,H,x})
 -\frac{\lambda_{K,y}}{\omega_j}
 (\Gamma_{A,i,j-1/2}^{+,H,y}+\Gamma_{A,i,j+1/2}^{-,H,y})+S_{A,ij}.
 \label{eq:GC-DG-A-raw-staggered}
\end{equation}
This is the standard subcell rewrite of the straight-path distortion
baseline.  Unlike the $J$ equation, however, the raw form
\eqref{eq:GC-DG-A-raw-staggered} does not inherit the prescribed determinant
companion law and must therefore be modified.

\subsubsection{Geometric-compatible subcell form for the distortion}
\label{sec:GC-DG-A-subcell}

Following the preceding construction, we now modify the raw distortion
subcell fluctuations so that their cofactor contractions reproduce the
same subcell density equation as the conservative rewrite.  This is the
subcell counterpart of the minimum-change construction
\eqref{eq:G-volume-optimization}--\eqref{eq:G-volume-constraints} used for
GC-FV.

Let $\overline F_\rho^{H,d}$ be the subcell density flux already
constructed in \eqref{eq:GC-DG-conservative-prefix-x}--
\eqref{eq:GC-DG-conservative-prefix-y}.  At each receiving node, we seek the
smallest Frobenius modification of the two raw $A$ fluctuations subject to
the required density contractions:
\begin{align}
 \widetilde\Gamma_{A,k+1/2}^{-,H,d}
 &=\arg\min_X\frac12\|X-\Gamma_{A,k+1/2}^{-,H,d}\|_F^2,
 &\rho_0\w_k:X
 &=\overline F_{\rho,k+1/2}^{H,d}-f_{\rho,k}^{d},
 \label{eq:GC-DG-subcell-opt-right}\\
 \widetilde\Gamma_{A,k-1/2}^{+,H,d}
 &=\arg\min_X\frac12\|X-\Gamma_{A,k-1/2}^{+,H,d}\|_F^2,
 &\rho_0\w_k:X
 &=f_{\rho,k}^{d}-\overline F_{\rho,k-1/2}^{H,d}.
 \label{eq:GC-DG-subcell-opt-left}
\end{align}
In these two independent problems, $X$ is a matrix of the same size as
$\A$, $\w_k=\cof\A_k$ is the cofactor at the receiving node, and
$f_{\rho,k}^d:=\rho_k v_{d,k}$ is its physical density flux in direction
$d$.  The target $\overline F_{\rho,k\pm1/2}^{H,d}$ is the single-valued
subcell density flux already defined above.  These are the same
one-constraint minimum-change problems as in the GC-FV construction; they
are solved explicitly by
Lemma~\ref{lem:app-single-projection}.  We again use the cofactor
direction $P_k$ employed in that projection, defined by
\eqref{eq:G-endpoint-projector} with the
endpoint state replaced by the receiving state $U_k$, and set
\begin{equation}
 h_k^d=g_kv_{d,k},\qquad
 \overline h_{k\pm1/2}^{H,d}
 =\frac{\overline F_{\rho,k\pm1/2}^{H,d}}{\rho_0}.
 \label{eq:GC-DG-staggered-data}
\end{equation}
Here $g_k=\det\A_k$, $v_{d,k}=\vct_k\cdot e_d$, $h_k^d$ is the physical
determinant flux, $\overline h_{k\pm1/2}^{H,d}$ is its prescribed subcell
counterpart, and $\w_k:P_k=1$.  The explicit solution is
\begin{equation}
 \boxed{\begin{aligned}
 \widetilde\Gamma_{A,k+1/2}^{-,H,d}
 &=\Gamma_{A,k+1/2}^{-,H,d}
 +\left(\overline h_{k+1/2}^{H,d}-h_k^d
 -\w_k:\Gamma_{A,k+1/2}^{-,H,d}\right)P_k,\\
 \widetilde\Gamma_{A,k-1/2}^{+,H,d}
 &=\Gamma_{A,k-1/2}^{+,H,d}
 +\left(h_k^d-\overline h_{k-1/2}^{H,d}
 -\w_k:\Gamma_{A,k-1/2}^{+,H,d}\right)P_k.
 \end{aligned}}
 \label{eq:GC-DG-staggered-projection}
\end{equation}
In particular,
\begin{equation}
 \rho_0\w_k:\widetilde\Gamma_{A,k+1/2}^{-,H,d}
 =\overline F_{\rho,k+1/2}^{H,d}-f_{\rho,k}^{d},
 \qquad
 \rho_0\w_k:\widetilde\Gamma_{A,k-1/2}^{+,H,d}
 =f_{\rho,k}^{d}-\overline F_{\rho,k-1/2}^{H,d}.
 \label{eq:GC-DG-staggered-companion}
\end{equation}

Combining the three blocks gives the complete two-dimensional subcell
representation
\begin{equation}
 \boxed{\begin{aligned}
 \frac{\dd Y_{ij}}{\dd t}
 &=-\frac{\lambda_{K,x}}{\omega_i}
 (\overline F_{Y,i+1/2,j}^{H,x}-\overline F_{Y,i-1/2,j}^{H,x})
 -\frac{\lambda_{K,y}}{\omega_j}
 (\overline F_{Y,i,j+1/2}^{H,y}-\overline F_{Y,i,j-1/2}^{H,y}),\\
 \frac{\dd\A_{ij}}{\dd t}
 &=-\frac{\lambda_{K,x}}{\omega_i}
 (\widetilde\Gamma_{A,i-1/2,j}^{+,H,x}
  +\widetilde\Gamma_{A,i+1/2,j}^{-,H,x})
 -\frac{\lambda_{K,y}}{\omega_j}
 (\widetilde\Gamma_{A,i,j-1/2}^{+,H,y}
  +\widetilde\Gamma_{A,i,j+1/2}^{-,H,y})+S_{A,ij},\\
 \frac{\dd\Jv_{ij}}{\dd t}
 &=-\frac{\lambda_{K,x}}{\omega_i}
 (\Gamma_{J,i-1/2,j}^{+,H,x}+\Gamma_{J,i+1/2,j}^{-,H,x})
 -\frac{\lambda_{K,y}}{\omega_j}
 (\Gamma_{J,i,j-1/2}^{+,H,y}+\Gamma_{J,i,j+1/2}^{-,H,y})
 -\frac{(\betaJ)_{ij}}{\theta_{2,ij}}.
 \end{aligned}}
 \label{eq:GC-DG-subcell-form}
\end{equation}
All nodal indices, LGL weights, geometric factors, and source terms in
\eqref{eq:GC-DG-subcell-form} have the same meanings as in
\eqref{eq:GC-DGSEM}.  The conservative subcell fluxes
$\overline F_Y^{H,d}$ are defined by
\eqref{eq:GC-DG-conservative-prefix-x}--
\eqref{eq:GC-DG-conservative-prefix-y}; the thermal-impulse fluctuations
$\Gamma_J^{\pm,H,d}$ follow from
\eqref{eq:GC-DG-RRG-gamma}--\eqref{eq:GC-DG-RRG-directed}; and the distortion
fluctuations $\widetilde\Gamma_A^{\pm,H,d}$ are the projected quantities in
\eqref{eq:GC-DG-staggered-projection}.

\begin{lemma}[Equivalence of the geometric-compatible distortion subcell form]
\label{lem:GC-DG-A-subcell-equivalence}
The distortion equation in
\eqref{eq:GC-DG-subcell-form}, constructed with
$\widetilde\Gamma_A^{\pm,H,d}$, is identical at every nodal point to the distortion
block of the high-order DGSEM scheme \eqref{eq:GC-DGSEM}.
\end{lemma}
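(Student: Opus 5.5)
The plan is to compare the two distortion right-hand sides node by node and direction by direction. I will show that they differ only by a scalar multiple of the cofactor direction $P_k$, and then fix that scalar by contracting with $\w_k$. Fix an element, a direction $d$, a line, and a receiving node $k$. Write $\mathcal R_{A,k}^{\rm DG,d}$ for the $A$ block of the directional residual in \eqref{eq:GC-DGSEM}, i.e.\ $2\sum_m Q_{km}\mathcal H_A^{-,G}(U_k,U_m;e_d)$ plus the two Kronecker-supported SAT terms. Write $\widetilde{\mathcal R}_{A,k}^{H,d}:=\widetilde\Gamma_{A,k-1/2}^{+,H,d}+\widetilde\Gamma_{A,k+1/2}^{-,H,d}$ for the corresponding subcell residual. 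Since both schemes use the same factors $\lambda_{K,d}/\omega_k$ and the same source $S_{A,k}$, it suffices to prove $\widetilde{\mathcal R}_{A,k}^{H,d}=\mathcal R_{A,k}^{\rm DG,d}$.

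\emph{Step 1 (raw equivalence).} Let $\mathcal R_{A,k}^{{\rm DG},0,d}$ denote the residual obtained from \eqref{eq:GC-DGSEM} with $\mathcal H_A^{-,G}$ replaced by the baseline $\mathcal H_{A,0}^-$ in the volume term, keeping the same face operator. I first check that the factorization \eqref{eq:GC-DG-A-factorization} reproduces this baseline, namely $\mathcal H_{A,0}^-(U_k,U_m;e_d)=F_{A,km}^{\star,d}+\mathcal C_A(\A_k,e_d)\overline\vct_{km}-F_A(U_k;e_d)$. Any term depending only on $U_k$ is annihilated by $D^{\rm SBP}\mathbf 1=0$, so only the $m$-dependent parts have to match; this is where the endpoint allocation of Remark~\ref{rem:G-endpoint-subcell} enters. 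The RRG telescoping \eqref{eq:GC-DG-RRG-telescope}--\eqref{eq:GC-DG-RRG-fluctuation-form}, with the boundary $\mathfrak G$ chosen to reproduce the GC-FV surface block, then gives $\Gamma_{A,k-1/2}^{+,H,d}+\Gamma_{A,k+1/2}^{-,H,d}=\mathcal R_{A,k}^{{\rm DG},0,d}$. I expect this step, the careful index bookkeeping of the raw RRG rewrite together with its element-boundary closure, to be the main obstacle. I would handle it exactly as in the conservative telescoping \eqref{eq:GC-DG-conservative-skew-x}, using $Q+Q^T=B_{\rm SBP}$ and diagonal consistency.

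\emph{Step 2 (parallel differences).} By \eqref{eq:G-volume-pair}, $\mathcal H_A^{-,G}(U_k,U_m)=\mathcal H_{A,0}^-(U_k,U_m)+R_L^G(U_k,U_m)P_k$, because the left endpoint is the receiving node. Hence $\mathcal R_{A,k}^{\rm DG,d}-\mathcal R_{A,k}^{{\rm DG},0,d}=\bigl(2\sum_mQ_{km}R_L^G(U_k,U_m)\bigr)P_k$. By \eqref{eq:GC-DG-staggered-projection}, both subcell corrections are also multiples of $P_k$. Combining this with Step~1, $\widetilde{\mathcal R}_{A,k}^{H,d}-\mathcal R_{A,k}^{\rm DG,d}=c_kP_k$ for some scalar $c_k$. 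Since $\w_k:P_k=1$, we get $c_k=\w_k:\widetilde{\mathcal R}_{A,k}^{H,d}-\w_k:\mathcal R_{A,k}^{\rm DG,d}$. It therefore suffices to show that the two cofactor contractions agree.

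\emph{Step 3 (contractions).} Summing the two identities in \eqref{eq:GC-DG-staggered-companion}, the physical flux $f_{\rho,k}^d$ cancels, giving $\rho_0\w_k:\widetilde{\mathcal R}_{A,k}^{H,d}=\overline F_{\rho,k+1/2}^{H,d}-\overline F_{\rho,k-1/2}^{H,d}$. By \eqref{eq:GC-DG-conservative-telescope-x}--\eqref{eq:GC-DG-conservative-telescope-y} with $\zeta=\rho$, this equals the density residual $\mathcal R_{\rho,k}^{H,d}$. On the DG side, the computation in the proof of Theorem~\ref{thm:GC-DG-compatibility} is used: \eqref{eq:GC-DG-volume-density-contraction} together with $D^{\rm SBP}\mathbf1=0$ handles the volume term, and \eqref{eq:GC-DG-density-face-flux} handles the SATs. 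Multiplying by $\omega_k$, it shows $\rho_0\w_k:\mathcal R_{A,k}^{\rm DG,d}$ equals the same $\mathcal R_{\rho,k}^{H,d}$ of \eqref{eq:GC-DG-conservative-residual-x}. Hence $c_k=0$ and the two distortion equations coincide at every node.

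At boundary nodes, I will also check that the projection leaves the surface fluctuation unchanged. There the target is $\overline F_\rho=\widehat F_\rho$, and by \eqref{eq:GC-DG-density-face-flux} the GC-FV surface block already satisfies the constraint, so the coefficient in \eqref{eq:GC-DG-staggered-projection} vanishes. This shows the boundary closure is consistent with that used in Step~1.
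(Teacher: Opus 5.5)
Your proposal is correct and follows essentially the same route as the paper's proof. In both, the DGSEM residual and the projected subcell residual are shown to equal the raw straight-path residual plus a scalar multiple of $P_k$, and the scalar is fixed because both cofactor contractions reproduce the same subcell density residual. Your Step~1 and your boundary-node check make explicit the raw RRG equivalence and the element-boundary closure, which the paper uses implicitly.
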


\begin{proof}
The key observation is that, at a fixed receiving nodal point, the
cofactor-normal minimum-change projection commutes with the linear weighted
summation defining the SBP lift.  Indeed, the projection direction depends
only on the receiving state, whereas both the Frobenius contraction and the
SBP lifting sum are linear.
At one node and in one direction, let
$\mathcal R_{A,k}^{\rm DG,H,d}$ denote the quadrature-integrated directional
residual of the distortion block in the original high-order DGSEM scheme,
and set
\begin{equation}
 \mathcal R_{A,k}^{\rm raw,H,d}
 :=\Gamma_{A,k-1/2}^{+,H,d}
  +\Gamma_{A,k+1/2}^{-,H,d},
 \qquad
 \mathcal R_{g,k}^{H,d}
 :=\overline h_{k+1/2}^{H,d}
  -\overline h_{k-1/2}^{H,d}.
 \label{eq:GC-DG-A-raw-density-residuals}
\end{equation}
Adding the two projected fluctuations in
\eqref{eq:GC-DG-staggered-projection} gives
\begin{equation}
 \widetilde\Gamma_{A,k-1/2}^{+,H,d}
 +\widetilde\Gamma_{A,k+1/2}^{-,H,d}
 =\mathcal R_{A,k}^{\rm raw,H,d}
 +\left(\mathcal R_{g,k}^{H,d}
 -\w_k:\mathcal R_{A,k}^{\rm raw,H,d}\right)P_k.
 \label{eq:GC-DG-projection-commutes}
\end{equation}
We now derive the original DGSEM residual in the same form.  By
\eqref{eq:G-volume-pair}, its ordered-pair distortion fluctuation is the
raw straight-path fluctuation plus $c_{km}P_k$, where
$c_{km}:=R_L^G(U_k,U_m;e_d)$ is defined by
\eqref{eq:G-projection-residuals}.  Equal-state consistency gives
$c_{kk}=0$.  Hence, using $2Q=S_{\rm SBP}+B_{\rm SBP}$, the diagonal
$B_{\rm SBP}$ contribution to the lifted correction vanishes.  Applying the
linear SBP lift and using that $P_k$ depends only on the receiving node then
give
\begin{equation}
 \begin{aligned}
 \mathcal R_{A,k}^{\rm DG,H,d}
 &=\mathcal R_{A,k}^{\rm raw,H,d}
   +\sum_{m=0}^N(S_{\rm SBP})_{km}(c_{km}P_k)\\
 &=\mathcal R_{A,k}^{\rm raw,H,d}+C_k^dP_k,
 \qquad
 C_k^d:=\sum_{m=0}^N(S_{\rm SBP})_{km}c_{km}.
 \end{aligned}
 \label{eq:GC-DG-projection-prefix-commute}
\end{equation}
Equation \eqref{eq:GC-DG-projection-prefix-commute} is the explicit use of
the commutation stated at the beginning of the proof.  The receiving-node
companion identity \eqref{eq:GC-DG-volume-density-contraction}, together
with the common GC-FV face operator, requires the original DGSEM residual
to induce the prescribed density residual; hence
\begin{equation}
 \w_k:\mathcal R_{A,k}^{\rm DG,H,d}
 =\mathcal R_{g,k}^{H,d}.
 \label{eq:GC-DG-original-residual-contraction}
\end{equation}
Contracting \eqref{eq:GC-DG-projection-prefix-commute} with $\w_k$ and using
$\w_k:P_k=1$ therefore yields
\begin{equation}
 C_k^d
 =\mathcal R_{g,k}^{H,d}
  -\w_k:\mathcal R_{A,k}^{\rm raw,H,d}.
 \label{eq:GC-DG-original-projection-coefficient}
\end{equation}
Substitution into \eqref{eq:GC-DG-projection-prefix-commute} gives
\begin{align}
 \mathcal R_{A,k}^{\rm DG,H,d}
 &=\mathcal R_{A,k}^{\rm raw,H,d}
 +\left(\mathcal R_{g,k}^{H,d}
 -\w_k:\mathcal R_{A,k}^{\rm raw,H,d}\right)P_k \nonumber\\
 &=\widetilde\Gamma_{A,k-1/2}^{+,H,d}
  +\widetilde\Gamma_{A,k+1/2}^{-,H,d},
 \label{eq:GC-DG-original-equals-subcell-A}
\end{align}
where the second equality is precisely
\eqref{eq:GC-DG-projection-commutes}.  Thus the original DGSEM and projected
subcell constructions have identical directional distortion residuals.
Multiplication by the same geometric and quadrature factors, summation of
the two coordinate directions, and addition of the same source term prove
equivalence at every nodal point.
\end{proof}

\begin{theorem}[Equivalence and companion law of the complete subcell form]
\label{thm:GC-DG-subcell-equivalence}
The complete subcell scheme
\eqref{eq:GC-DG-subcell-form} is identical at every nodal point to the high-order
DGSEM scheme \eqref{eq:GC-DGSEM}.  Its distortion equation induces
the two-dimensional nodal density equation
\begin{equation}
 \boxed{
 \frac{\dd\rho_{ij}}{\dd t}
 =-\frac{\lambda_{K,x}}{\omega_i}
 (\overline F_{\rho,i+1/2,j}^{H,x}
 -\overline F_{\rho,i-1/2,j}^{H,x})
 -\frac{\lambda_{K,y}}{\omega_j}
 (\overline F_{\rho,i,j+1/2}^{H,y}
 -\overline F_{\rho,i,j-1/2}^{H,y}).}
 \label{eq:GC-DG-nodal-mass}
\end{equation}
Equation \eqref{eq:GC-DG-nodal-mass} is the subcell-flux representation of
the high-order density equation \eqref{eq:GC-DG-density}.  Its global
semidiscrete consequence is stated later in
Theorem~\ref{thm:global-semidiscrete-mass}.
\end{theorem}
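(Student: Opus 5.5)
The proof splits into the three blocks of \eqref{eq:GC-DG-subcell-form}, followed by a separate argument for the density law.

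\emph{Conservative block.} Equivalence for $Y$ is already contained in \eqref{eq:GC-DG-conservative-prefix-x}--\eqref{eq:GC-DG-Y-prefix}. The strong-form residual in \eqref{eq:GC-DGSEM} equals $\frac1{\omega_i}\mathcal R^{H,x}_{Y,ij}$ (and likewise in $y$), because the lifted term $2\lambda_{K,x}\sum_m D^{\rm SBP}_{im}(\cdot)$ is $\frac{\lambda_{K,x}}{\omega_i}\cdot 2\sum_m Q_{im}(\cdot)$. The telescoping of the prefix sums then reproduces the nodal $Y$ equation. I would only record this scaling identity.

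\emph{Nonconservative blocks.} For the $J$ block I would check three facts:
\begin{itemize}
\item the factorization \eqref{eq:GC-DG-J-RRG-data} reproduces $\mathcal H^-_{J,0}(U_k,U_m;e_d)$ exactly as $F_J^\star+\mathcal C_J(\Jv_k,n)\overline\vct-F_J(U_k)$;
\item the $S_{\rm SBP}$/$B_{\rm SBP}$ split works as in the conservative case;
\item the element-boundary choice of $\mathfrak G$ matches the SAT, using $2Q=S_{\rm SBP}+B_{\rm SBP}$ and diagonal consistency.
\end{itemize}
Together these give \eqref{eq:GC-DG-RRG-fluctuation-form} as the exact RRG rewrite. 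The distortion block is Lemma~\ref{lem:GC-DG-A-subcell-equivalence}, which I invoke directly. Summing the two directions with the factors $\lambda_{K,d}/\omega$ and adding the identical sources $S_U$ gives nodal identity of the complete scheme.

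\emph{Density law.} I would contract the $A$ equation of \eqref{eq:GC-DG-subcell-form} with $\rho_0\w_{ij}$. The chain rule $\dd\rho_{ij}/\dd t=\rho_0\w_{ij}:\dd\A_{ij}/\dd t$ holds, and $\w:S_A=0$ removes the source. By \eqref{eq:GC-DG-staggered-companion}, each pair of directed fluctuations contracts to
\[
(f_{\rho,k}-\overline F_{\rho,k-1/2})+(\overline F_{\rho,k+1/2}-f_{\rho,k})=\overline F_{\rho,k+1/2}-\overline F_{\rho,k-1/2},
\]
so the physical fluxes cancel and \eqref{eq:GC-DG-nodal-mass} follows. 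To identify it with \eqref{eq:GC-DG-density}, apply the conservative rewrite with $\zeta=\rho$: $\overline F^{H,d}_\rho$ telescopes to $\mathcal R^{H,d}_{\rho}$, which by \eqref{eq:GC-DG-conservative-residual-x} equals $\omega_i$ times the lifted $F^0_\rho$ volume term plus the density SAT \eqref{eq:GC-DG-density-SAT}.

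\emph{Main obstacle.} The delicate point is the boundary nodes. There one must confirm that the choice "$\overline F_\rho$ equals $\widehat F_\rho$ at element faces'' is consistent with the boundary $\mathfrak G$ of the $A$ block. In other words, the projected boundary fluctuation must coincide with the GC-FV face fluctuation $\widehat{\mathcal H}^-_A$, whose contraction is \eqref{eq:GC-DG-density-face-flux}.

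I would argue as follows. The boundary $\widehat{\mathcal H}^-_A$ already satisfies the constraint in \eqref{eq:GC-DG-subcell-opt-right}/\eqref{eq:GC-DG-subcell-opt-left}, so the projection coefficient in \eqref{eq:GC-DG-staggered-projection} vanishes and the projection leaves it unchanged. This is also the hidden assumption behind \eqref{eq:GC-DG-original-residual-contraction}, so I would make it explicit.
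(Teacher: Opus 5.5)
Your proposal is correct and follows the paper's proof. Like the paper, you establish equivalence block by block by citing the conservative telescoping, the RRG identity, and Lemma~\ref{lem:GC-DG-A-subcell-equivalence}. You then contract the distortion equation with $\rho_0\w_{ij}$ and telescope using \eqref{eq:GC-DG-staggered-companion}.

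Your extra checks are valid and only make explicit what the paper states tersely. These are: that the $J$ factorization reproduces $\mathcal H_{J,0}^-$ exactly; that the projection coefficient vanishes at element faces by \eqref{eq:GC-DG-density-face-flux}; and that \eqref{eq:GC-DG-nodal-mass} can be identified with \eqref{eq:GC-DG-density} directly through the $\zeta=\rho$ rewrite, rather than by appeal to the operator equivalence.
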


\begin{proof}
The conservative equivalence has already been established by
\eqref{eq:GC-DG-conservative-telescope-x}--
\eqref{eq:GC-DG-conservative-telescope-y}; the $J$ equivalence is the RRG
identity \eqref{eq:GC-DG-RRG-telescope}; and the $A$ equivalence is
Lemma~\ref{lem:GC-DG-A-subcell-equivalence}.  It remains to derive the
companion equation.  Contract the $A$ equation in
\eqref{eq:GC-DG-subcell-form} with $\rho_0\w_{ij}$ and use
$\dd\rho/\dd t=\rho_0\w:\dd\A/\dd t$,
\eqref{eq:GC-DG-staggered-companion}, and $\w:S_A=0$.  In each direction,
\begin{align}
 \rho_0\w_k:\left(
 \widetilde\Gamma_{A,k-1/2}^{+,H,d}
 +\widetilde\Gamma_{A,k+1/2}^{-,H,d}\right)
 &=\left(f_{\rho,k}^d-\overline F_{\rho,k-1/2}^{H,d}\right)
  +\left(\overline F_{\rho,k+1/2}^{H,d}-f_{\rho,k}^d\right)
 \nonumber\\
 &=\overline F_{\rho,k+1/2}^{H,d}
  -\overline F_{\rho,k-1/2}^{H,d}.
 \label{eq:GC-DG-subcell-density-telescope}
\end{align}
Substitution into the distortion equation gives precisely
\eqref{eq:GC-DG-nodal-mass}, which is equivalent at every nodal point to
\eqref{eq:GC-DG-density} by the already established operator equivalence.
\end{proof}

\subsection{The GC-FV scheme in low-order subcell form}
\label{sec:GC-FV-low-subcell}

Unlike the high-order GC-DG operator, GC-FV requires no algebraic rewrite:
it already consists of differences of single-valued conservative fluxes
and sums of directed $A$ and $J$ fluctuations.  To place the two schemes
in a common notation, regard every LGL node $(i,j)$ as an algebraic
subcell with quadrature mass
\begin{equation}
 m_{ij}^K=J_K\omega_i\omega_j,
 \qquad J_K=\frac{h_{K,x}h_{K,y}}4.
 \label{eq:SC-subcell-mass}
\end{equation}
No additional grid or polynomial reconstruction is introduced.

Suppress the transverse index and consider two adjacent nodal states
$U_k$ and $U_{k+1}$ in direction $d\in\{x,y\}$.  With
$\ell_{k+1/2}^d=a_{k+1/2}^d/2$, define the low-order conservative and
induced-density fluxes by
\begin{align}
 \overline F_{Y,k+1/2}^{L,d}
 &:=F_Y^0(U_k,U_{k+1};e_d)
    -\ell_{k+1/2}^d(Y_{k+1}-Y_k),
 \label{eq:GC-FV-low-subcell-Y-flux}\\
 \overline F_{\rho,k+1/2}^{L,d}
 &:=F_\rho^0(U_k,U_{k+1};e_d)
    -\ell_{k+1/2}^d(\rho_{k+1}-\rho_k).
 \label{eq:GC-FV-low-subcell-rho-flux}
\end{align}
Here and below, the superscript $L$ denotes a low-order subcell quantity,
in direct contrast to the high-order quantities marked by $H$ in
Section~\ref{sec:GC-DG-subcell-equivalent}.  The corresponding directed
low-order fluctuations are
\begin{align}
 \widetilde\Gamma_{A,k+1/2}^{-,L,d}
 &:=\mathcal H_A^{-,G}(U_k,U_{k+1};e_d)
    +D_A^-(U_k,U_{k+1};e_d),\notag\\
 \widetilde\Gamma_{A,k+1/2}^{+,L,d}
 &:=\mathcal H_A^{+,G}(U_k,U_{k+1};e_d)
    +D_A^+(U_k,U_{k+1};e_d),
 \label{eq:GC-FV-low-subcell-A-fluctuations}\\
 \Gamma_{J,k+1/2}^{-,L,d}
 &:=\mathcal H_{J,0}^-(U_k,U_{k+1};e_d)
    +D_J^-(U_k,U_{k+1};e_d),\notag\\
 \Gamma_{J,k+1/2}^{+,L,d}
 &:=\mathcal H_{J,0}^+(U_k,U_{k+1};e_d)
    +D_J^+(U_k,U_{k+1};e_d).
 \label{eq:GC-FV-low-subcell-J-fluctuations}
\end{align}
The tilde again distinguishes the projected distortion fluctuations from
the unmodified thermal-impulse fluctuations.  At an element boundary, the
low- and high-order subcell quantities are both set equal to the
corresponding block of the same complete GC-FV face operator
$\widehat{\mathcal K}_G^\pm$ in
\eqref{eq:GC-FV-left}--\eqref{eq:GC-FV-right}.

With transverse indices restored, the low-order GC-FV scheme on the LGL
subcells is therefore
\begin{equation}
 \boxed{\begin{aligned}
 \frac{\dd Y_{ij}}{\dd t}
 &=-\frac{\lambda_{K,x}}{\omega_i}
 (\overline F_{Y,i+1/2,j}^{L,x}-\overline F_{Y,i-1/2,j}^{L,x})
 -\frac{\lambda_{K,y}}{\omega_j}
 (\overline F_{Y,i,j+1/2}^{L,y}-\overline F_{Y,i,j-1/2}^{L,y}),\\
 \frac{\dd\A_{ij}}{\dd t}
 &=-\frac{\lambda_{K,x}}{\omega_i}
 (\widetilde\Gamma_{A,i-1/2,j}^{+,L,x}
  +\widetilde\Gamma_{A,i+1/2,j}^{-,L,x})
 -\frac{\lambda_{K,y}}{\omega_j}
 (\widetilde\Gamma_{A,i,j-1/2}^{+,L,y}
  +\widetilde\Gamma_{A,i,j+1/2}^{-,L,y})+S_{A,ij},\\
 \frac{\dd\Jv_{ij}}{\dd t}
 &=-\frac{\lambda_{K,x}}{\omega_i}
 (\Gamma_{J,i-1/2,j}^{+,L,x}+\Gamma_{J,i+1/2,j}^{-,L,x})
 -\frac{\lambda_{K,y}}{\omega_j}
 (\Gamma_{J,i,j-1/2}^{+,L,y}+\Gamma_{J,i,j+1/2}^{-,L,y})
 -\frac{(\betaJ)_{ij}}{\theta_{2,ij}}.
 \end{aligned}}
 \label{eq:GC-FV-low-subcell-form}
\end{equation}
By the GC-FV companion law, its distortion block induces
\begin{equation}
 \frac{\dd\rho_{ij}}{\dd t}
 =-\frac{\lambda_{K,x}}{\omega_i}
 (\overline F_{\rho,i+1/2,j}^{L,x}
  -\overline F_{\rho,i-1/2,j}^{L,x})
 -\frac{\lambda_{K,y}}{\omega_j}
 (\overline F_{\rho,i,j+1/2}^{L,y}
  -\overline F_{\rho,i,j-1/2}^{L,y}).
 \label{eq:GC-FV-low-subcell-density}
\end{equation}
We denote the spatial flux--fluctuation operators in
\eqref{eq:GC-DG-subcell-form} and \eqref{eq:GC-FV-low-subcell-form}, excluding
the point-local source, by $\mathcal L_K^H$ and $\mathcal L_K^L$,
respectively.  Thus the superscripts $H$ and $L$ consistently distinguish
the high- and low-order subcell operators and all of their interface
quantities.

\subsection{Global semidiscrete mass balance}
\label{sec:global-semidiscrete-mass}

The companion laws established above are local: GC-FV induces a density
balance in every finite-volume cell, while the high- and low-order subcell
operators induce density balances at every LGL nodal point.  Because all
three formulations use a single-valued density flux at shared interfaces,
their internal contributions cancel under global summation.  The following
theorem records the resulting exact semidiscrete mass balance, which is the
common global structure used by the blended method in
Section~\ref{sec:gpr-subcell}.  This statement is deliberately
semidiscrete: a finite Runge--Kutta update of $\A$ does not satisfy the
nonlinear determinant chain rule exactly.  The corresponding fully discrete
mass drift is defined and analyzed in
Remark~\ref{rem:fully-discrete-mass-drift}.

\begin{theorem}[Global semidiscrete mass balance]
\label{thm:global-semidiscrete-mass}
Assume the hypotheses of Theorems~\ref{thm:GC-FV} and
\ref{thm:GC-DG-compatibility}.  The GC-FV and GC-DG spatial operators satisfy,
respectively,
\begin{align}
 \frac{\dd\mathcal M_h^{\rm FV}}{\dd t}
 +\mathcal F_{\partial\Omega}^{\rm FV}&=0,
 &
 \mathcal M_h^{\rm FV}
 &:=\sum_{K\in\mathcal T_h}|K|\rho_K,
 &
 \mathcal F_{\partial\Omega}^{\rm FV}
 &:=\sum_{f\subset\partial\Omega}|f|\widehat F_{\rho,f},
 \label{eq:global-FV-mass}\\
 \frac{\dd\mathcal M_h^{\rm DG}}{\dd t}
 +\mathcal F_{\partial\Omega}^{\rm DG}&=0,
 &
 \mathcal M_h^{\rm DG}
 &:=\sum_KJ_K\sum_{i,j=0}^N\omega_i\omega_j\rho_{ij}^K,
 &
 \mathcal F_{\partial\Omega}^{\rm DG}
 &:=\sum_{f\subset\partial\Omega}\sum_{q=0}^N
 J_f\omega_q\widehat F_{\rho,fq}.
 \label{eq:global-DG-mass}
\end{align}
The low-order GC-FV operator on the LGL subcells satisfies
\eqref{eq:global-DG-mass} with the same quadrature mass and boundary flux.
Consequently, under periodic boundary conditions,
\begin{equation}
 \frac{\dd\mathcal M_h^{\rm FV}}{\dd t}
 =\frac{\dd\mathcal M_h^{\rm DG}}{\dd t}=0.
 \label{eq:global-periodic-mass}
\end{equation}
\end{theorem}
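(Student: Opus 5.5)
The plan is to multiply each local companion law by its natural mass weight, sum over the mesh, and show that every interior contribution cancels because the density flux at each shared interface is single valued and enters with opposite normals from the two sides.

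\emph{GC-FV.} Multiply \eqref{eq:GC-FV-density} of Theorem~\ref{thm:GC-FV} by $|K|$ and sum over $K\in\mathcal T_h$. An interior face $f$ shared by $K$ and $K_f$ appears twice, with normals $n_{Kf}$ and $n_{K_fK}=-n_{Kf}$. The key point is that $\widehat F_\rho$ is antisymmetric under simultaneous state exchange and normal reversal. For the central part, $F_\rho^0(U_R,U_L;-n)=-F_\rho^0(U_L,U_R;n)$, since $\rho v_n$ is linear in $n$ and the average is symmetric. For the LF part, $\ell_f$ is symmetric in the two states and even in $n$ (it is built from $|v_n|$ and $\mu_n$ in \eqref{eq:finite-deformation-LF-speed}), while $\Delta\rho$ changes sign. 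Equivalently, $\widehat F_{\rho,f}=\rho_0\bigl(h_L+\w_L:\widehat{\mathcal H}_A^-\bigr)$ is single valued by Lemmas~\ref{lem:G-core-companion} and \ref{lem:G-PLF} together with \eqref{eq:GC-FV-orientation}. Hence interior terms cancel pairwise, and only the physical boundary faces remain, which gives \eqref{eq:global-FV-mass}.

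\emph{GC-DG.} Use the subcell representation \eqref{eq:GC-DG-nodal-mass} from Theorem~\ref{thm:GC-DG-subcell-equivalence}, which is equivalent to \eqref{eq:GC-DG-density}. Multiply it by $m_{ij}^K=J_K\omega_i\omega_j$. Since $J_K\lambda_{K,x}=J_f=h_{K,y}/2$ on vertical faces, the $x$-term becomes $-J_f\omega_j(\overline F^{H,x}_{\rho,i+1/2,j}-\overline F^{H,x}_{\rho,i-1/2,j})$, and similarly in $y$. Summing over $i$ on each line telescopes the internal subcell fluxes, leaving $J_f\omega_j$ times the difference of the two element-boundary values. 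By definition these boundary values are the GC-FV face fluxes $\widehat F_\rho$ evaluated with the outward normals $\pm e_d$, signed as outward fluxes.

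A subtle point is the sign convention at the left face. There the subcell flux $\overline F_{\rho,-1/2}$ is taken in the $+e_1$ direction, so it equals $-\widehat F_\rho(U_{0j},U^{\rm ext};-e_1)$ by the same antisymmetry used above. This matches the SAT in \eqref{eq:GC-DG-density-SAT}, once one uses the telescoping \eqref{eq:GC-DG-conservative-skew-x} to absorb the $f_\rho$ terms. Summing over elements, the conforming mesh makes the face nodes $q$ and weights $J_f\omega_q$ coincide on both sides of each interior face. The neighboring element evaluates the same $\widehat F_\rho$ with exchanged states and reversed normal, so interior faces cancel and \eqref{eq:global-DG-mass} follows. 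Corner nodes cause no difficulty because each coordinate direction is telescoped separately.

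\emph{Low-order GC-FV on LGL subcells.} The same argument applies verbatim to \eqref{eq:GC-FV-low-subcell-density}. The internal fluxes \eqref{eq:GC-FV-low-subcell-rho-flux} are single valued between nodes $k$ and $k+1$, and the element-boundary values are, by construction, the same GC-FV face fluxes. The result is therefore \eqref{eq:global-DG-mass} with the identical quadrature mass and boundary flux.

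\emph{Periodic case and main obstacle.} Under periodic boundary conditions there are no physical boundary faces, since every face is interior with matching traces. Both boundary sums are then empty and \eqref{eq:global-periodic-mass} follows. I expect the only real obstacle to be the bookkeeping of orientation signs at element boundaries: one must check that the element-boundary subcell value equals the single-valued outward face flux from both sides. This rests on the antisymmetry of $\widehat F_\rho$ under state exchange and normal reversal, which in turn needs $\ell_f$ to be symmetric. That symmetry holds for the choices \eqref{eq:finite-deformation-LF-speed} and \eqref{eq:active-J-LF-speed}, since the maximum is taken over $\{L,R\}$ and $\mu_n$ depends quadratically on $n$.
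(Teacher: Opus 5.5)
Your argument is correct. For GC-FV and for the low-order operator on the LGL subcells it matches the paper's argument: sum the local laws with their mass weights and cancel the single-valued interior fluxes.

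For GC-DG you take a slightly different route. You telescope the prefix-sum subcell form \eqref{eq:GC-DG-nodal-mass}. The paper instead works directly with the flux-differencing density equation \eqref{eq:GC-DG-density} and multiplies it by $J_K\omega_i\omega_j$. It then uses the symmetry of $F_\rho^0$ and $Q+Q^T=B_{\rm SBP}$ to collapse $2\sum_{i,m}Q_{im}F_\rho^0(U_{ij},U_{mj};e_1)$ to $f_\rho(U_{Nj};e_1)-f_\rho(U_{0j};e_1)$. The density SAT \eqref{eq:GC-DG-density-SAT} then replaces these endpoint physical fluxes by $\widehat F_\rho$.

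The two versions rest on different results:
\begin{itemize}
\item The paper's version needs only Theorem~\ref{thm:GC-DG-compatibility} and the SBP property.
\item Yours needs Theorem~\ref{thm:GC-DG-subcell-equivalence}, where the same SBP fact is hidden. The telescoped identity at the last node $i=N$ holds only because the full prefix sum $\sum_{\ell,m=0}^N(S_{\rm SBP})_{\ell m}F_\rho^0(U_{\ell j},U_{mj};e_1)$ vanishes, by skew-symmetry of $S_{\rm SBP}$ and symmetry of $F_\rho^0$.
\end{itemize}

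In exchange, your proof handles the high-order and low-order operators, and hence the later global blend, with one uniform telescoping argument. It also makes explicit two points the paper leaves implicit. The first is the sign convention $\overline F_{\rho,-1/2,j}^{H,x}=-\widehat F_\rho(U_{0j},U_{0j}^{\rm ext};-e_1)$ at the left element face. The second is the state symmetry and normal evenness of $\ell_f$, which are needed for $\widehat F_\rho$ to be single valued across element faces.
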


\begin{proof}
Summing the cellwise companion equation \eqref{eq:GC-FV-density} over the
GC-FV mesh cancels every internal single-valued density flux and gives
\eqref{eq:global-FV-mass}.  For GC-DG, multiply
\eqref{eq:GC-DG-density} by $J_K\omega_i\omega_j$ and sum over the nodal
points of $K$.  In the $x$ direction, symmetry of $F_\rho^0$ and the SBP
identity give
\begin{align}
 2\sum_{i,m=0}^NQ_{im}F_\rho^0(U_{ij},U_{mj};e_1)
 &=\sum_{i,m=0}^N(Q_{im}+Q_{mi})
   F_\rho^0(U_{ij},U_{mj};e_1)\nonumber\\
 &=f_\rho(U_{Nj};e_1)-f_\rho(U_{0j};e_1).
 \label{eq:global-DG-SBP-telescope}
\end{align}
The corresponding terms in the density SAT
\eqref{eq:GC-DG-density-SAT} replace these physical endpoint fluxes by
$\widehat F_\rho$.  Repeating the same calculation in the $y$ direction
therefore yields
\begin{equation}
 \frac{\dd}{\dd t}\left(
 J_K\sum_{i,j=0}^N\omega_i\omega_j\rho_{ij}^K\right)
 +\sum_{f\subset\partial K}J_f\sum_{q=0}^N
 \omega_q\widehat F_{\rho,fq}^{K,\rm out}=0.
 \label{eq:global-DG-element-mass}
\end{equation}
The low-order subcell result follows directly by telescoping
\eqref{eq:GC-FV-low-subcell-density}.  Finally, summation over the elements
cancels every internal face contribution and proves
\eqref{eq:global-DG-mass}; periodicity gives
\eqref{eq:global-periodic-mass}.
\end{proof}

\section{Geometric-compatible global subcell limiter}
\label{sec:gpr-subcell}

The unlimited GC-DG scheme is high-order accurate but may develop Gibbs oscillations
near shocks.  Standard postprocessing procedures, including TVD-type
limiters~\cite{CockburnShu2001} and oscillation-eliminating (OE)
filters~\cite{PengSunWu2025,YangFuOEESDG}, are designed to preserve the
cell average of each evolved variable.  Applied directly to the distortion,
they therefore enforce
\begin{equation}
 \overline{\A_h^{\rm lim}}{}_K=\overline{\A_h}{}_K,
 \qquad
 \overline{\det\A_h^{\rm lim}}{}_K
 \ne \overline{\det\A_h}{}_K
 \quad\hbox{in general}.
 \label{eq:SC-generic-limiter-obstruction}
\end{equation}
Here the overbar denotes the LGL cell average.  Notice that the second
quantity in \eqref{eq:SC-generic-limiter-obstruction} is the average of the
determinant, not the determinant of the average.  Since
$\rho_h=\rho_0\det\A_h$, preserving $\overline\A_h$ is consequently
insufficient to preserve the derived cell mass.  Although the underlying
semidiscrete GC-DG operator remains companion compatible, inserting such a
generic limiting map between Runge--Kutta stages generally breaks the fully
discrete companion structure.  The resulting limiter-induced mass drift is
not controlled by the temporal truncation estimate in
Remark~\ref{rem:fully-discrete-mass-drift}; consequently, the
$O(\Delta t^p)$ accumulated mass-drift estimate derived there no longer
applies.

We instead blend two spatial residuals that separately satisfy the same
discrete companion law: the high-order GC-DG operator and the robust GC-FV
operator on the LGL nodal subcells.  This follows the elementwise subcell
strategy of Hennemann et al.~\cite{Hennemann2021}, adapted here to the
nonconservative GPR system and its determinant companion law.

\subsection{Elementwise high--low blending}

Fix an element $K$ and its tensor-product LGL nodal points
$\{(\xi_i,\xi_j)\}_{i,j=0}^N$.  The blend is performed within $K$ by
combining, at every one of these nodal points, the high- and low-order
subcell residuals introduced at the end of
Section~\ref{sec:GC-FV-low-subcell}.  These operators have the same subcell
layout.  To make the blend precise, let $r\in\{H,L\}$ and write their three
spatial blocks, excluding the point-local relaxation sources, as
\begin{equation}
 \boxed{\begin{aligned}
 (\mathcal L_{K,Y}^{r}U)_{ij}
 &=-\frac{\lambda_{K,x}}{\omega_i}
  (\overline F_{Y,i+1/2,j}^{r,x}
  -\overline F_{Y,i-1/2,j}^{r,x})
  -\frac{\lambda_{K,y}}{\omega_j}
  (\overline F_{Y,i,j+1/2}^{r,y}
  -\overline F_{Y,i,j-1/2}^{r,y}),\\
 (\mathcal L_{K,A}^{r}U)_{ij}
 &=-\frac{\lambda_{K,x}}{\omega_i}
  (\widetilde\Gamma_{A,i-1/2,j}^{+,r,x}
  +\widetilde\Gamma_{A,i+1/2,j}^{-,r,x})
  -\frac{\lambda_{K,y}}{\omega_j}
  (\widetilde\Gamma_{A,i,j-1/2}^{+,r,y}
  +\widetilde\Gamma_{A,i,j+1/2}^{-,r,y}),\\
 (\mathcal L_{K,J}^{r}U)_{ij}
 &=-\frac{\lambda_{K,x}}{\omega_i}
  (\Gamma_{J,i-1/2,j}^{+,r,x}
  +\Gamma_{J,i+1/2,j}^{-,r,x})
  -\frac{\lambda_{K,y}}{\omega_j}
  (\Gamma_{J,i,j-1/2}^{+,r,y}
  +\Gamma_{J,i,j+1/2}^{-,r,y}).
 \end{aligned}}
 \label{eq:SC-high-low-operators}
\end{equation}
For $r=H$, the fluxes and directed fluctuations in
\eqref{eq:SC-high-low-operators} are those of
\eqref{eq:GC-DG-subcell-form}; for $r=L$, they are those of
\eqref{eq:GC-FV-low-subcell-form}.  Thus no new spatial operator is
introduced here.  Moreover, the two operators use the same complete GC-FV
face operator at $\partial K$; only their internal LGL-subcell quantities
differ.

Let $\alpha_K\in[0,1]$ be one coefficient common to every nodal point,
equation block, and internal subcell interface of $K$ during one residual
evaluation.  We define
\begin{equation}
 \boxed{\frac{\dd U_{ij}^K}{\dd t}
 =(1-\alpha_K)(\mathcal L_K^HU)_{ij}
  +\alpha_K(\mathcal L_K^LU)_{ij}+S_U(U_{ij}^K).}
 \label{eq:SC-blended-operator}
\end{equation}
The coefficient $\alpha_K$ is an oscillation indicator and is recomputed
at every Runge--Kutta stage.  In a smooth, well-resolved element,
$\alpha_K\to0$ and \eqref{eq:SC-blended-operator} recovers the accurate,
low-dissipation GC-DG operator.  Near an under-resolved discontinuity,
$\alpha_K\to1$ and the method approaches the more dissipative GC-FV
operator on the LGL subcells.  Intermediate values provide a continuous
transition between these limits.

The crucial point is that this construction blends spatial residuals rather
than postprocessing the distortion polynomial.  Since the high- and
low-order distortion operators each satisfy their own discrete density
companion law, cofactor contraction commutes with their scalar convex
combination.  We now show that the blend therefore preserves the companion
law.  More specifically, the induced density residual is blended between
its high- and low-order forms by the same elementwise coefficient
$\alpha_K$ used for the evolved variables.  Thus density undergoes the same
limiting action even though it is recovered from $\det\A$ rather than
evolved independently.

\begin{theorem}[Geometric-compatible companion law under global subcell blending]
\label{thm:SC-companion-blending}
Assume the hypotheses of Theorem~\ref{thm:GC-DG-compatibility}.  Let
$\alpha_K\in[0,1]$ be constant over the nodal subcells of $K$ during one
residual evaluation, and let the high- and low-order operators use the
same element-boundary GC-FV face operator.  Define
\begin{align}
 (\mathcal D_{\rho,K}^{r}U)_{ij}
 &:=-\frac{\lambda_{K,x}}{\omega_i}
 (\overline F_{\rho,i+1/2,j}^{r,x}
 -\overline F_{\rho,i-1/2,j}^{r,x})
 -\frac{\lambda_{K,y}}{\omega_j}
 (\overline F_{\rho,i,j+1/2}^{r,y}
 -\overline F_{\rho,i,j-1/2}^{r,y}),
 \qquad r\in\{H,L\},
 \label{eq:SC-density-residual-r}\\
 \overline F_{\rho}^{B,d}
 &:=(1-\alpha_K)\overline F_{\rho}^{H,d}
   +\alpha_K\overline F_{\rho}^{L,d}.
 \label{eq:SC-blended-density-flux}
\end{align}
Then the distortion block of the blended scheme
\eqref{eq:SC-blended-operator} induces, at every LGL node,
\begin{equation}
 \boxed{\begin{aligned}
 \frac{\dd\rho_{ij}^K}{\dd t}
 &=(1-\alpha_K)(\mathcal D_{\rho,K}^{H}U)_{ij}
   +\alpha_K(\mathcal D_{\rho,K}^{L}U)_{ij}\\
 &=-\frac{\lambda_{K,x}}{\omega_i}
 (\overline F_{\rho,i+1/2,j}^{B,x}
 -\overline F_{\rho,i-1/2,j}^{B,x})
 -\frac{\lambda_{K,y}}{\omega_j}
 (\overline F_{\rho,i,j+1/2}^{B,y}
 -\overline F_{\rho,i,j-1/2}^{B,y}).
 \end{aligned}}
 \label{eq:SC-blended-nodal-density}
\end{equation}
Thus the global blend satisfies the semidiscrete mass balance
\eqref{eq:global-DG-mass}; in particular,
$\dd\mathcal M_h^{\rm DG}/\dd t=0$ for periodic boundary conditions.
It also preserves the conservative momentum and total-energy balances,
preserves constant states in the absence of sources, and recovers the
unlimited GC-DG operator wherever $\alpha_K=0$.
\end{theorem}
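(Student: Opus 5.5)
The proof will run entirely at the level of a single LGL node and exploit linearity. Fix $K$ and a node $(i,j)$. Write the distortion block of \eqref{eq:SC-blended-operator} as
\[
\frac{\dd\A_{ij}}{\dd t}=(1-\alpha_K)(\mathcal L^H_{K,A}U)_{ij}+\alpha_K(\mathcal L^L_{K,A}U)_{ij}+S_{A,ij}.
\]
Then contract with $\rho_0\w_{ij}$, using the exact chain rule $\dd\rho_{ij}/\dd t=\rho_0\w_{ij}:\dd\A_{ij}/\dd t$ and $\w:S_A=0$. Since $\alpha_K$ is a scalar common to the node, the contraction passes through the convex combination.

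For $r=H$, the projected fluctuations satisfy \eqref{eq:GC-DG-staggered-companion}. Hence, as in \eqref{eq:GC-DG-subcell-density-telescope}, $\rho_0\w_{ij}:(\mathcal L^H_{K,A}U)_{ij}=(\mathcal D^H_{\rho,K}U)_{ij}$. The receiving-node physical flux $f_{\rho,k}^d$ cancels between the left and right fluctuations in each direction.

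For $r=L$, I will verify the analogous one-sided identities for the low-order fluctuations \eqref{eq:GC-FV-low-subcell-A-fluctuations}. By Lemmas~\ref{lem:G-core-companion} and \ref{lem:G-PLF}, the left-state contraction is $\rho_0\w_k:\widetilde\Gamma^{-,L,d}_{A,k+1/2}=\overline F^{L,d}_{\rho,k+1/2}-f^d_{\rho,k}$. The right-state contraction is $\rho_0\w_{k+1}:\widetilde\Gamma^{+,L,d}_{A,k+1/2}=f^d_{\rho,k+1}-\overline F^{L,d}_{\rho,k+1/2}$, using $h_R-h^t_G$ together with the $+\ell\Delta g$ constraint. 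Summing the two directed contributions at a node gives $(\mathcal D^L_{\rho,K}U)_{ij}$, which is \eqref{eq:GC-FV-low-subcell-density}.

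At element-boundary interfaces, both $H$ and $L$ use the same GC-FV face operator. Identity \eqref{eq:GC-DG-density-face-flux} then gives the same flux $\widehat F_\rho$ in both cases. Consequently the boundary subcell density fluxes coincide, $\overline F^{H,d}_\rho=\overline F^{L,d}_\rho=\widehat F_\rho$. The second line of \eqref{eq:SC-blended-nodal-density} then follows by linearity, with $\overline F^{B,d}_\rho$ as in \eqref{eq:SC-blended-density-flux}.

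I expect the main point needing care to be exactly this boundary step. Only there does the flux at an element face have to be single-valued across $K$ and its neighbour $K'$, which carry \emph{different} coefficients $\alpha_K$ and $\alpha_{K'}$. Since $\overline F^{H}_\rho=\overline F^{L}_\rho=\widehat F_\rho$ on $\partial K$, the blended flux at the face is $\widehat F_\rho$ regardless of $\alpha$, so it remains single-valued.

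The global balance then follows as in Theorem~\ref{thm:global-semidiscrete-mass}. Multiply by $m^K_{ij}=J_K\omega_i\omega_j$; the factors $\lambda_{K,x}/\omega_i$ cancel against the weights up to the face Jacobian. Summation over $i$ then telescopes the interior subcell fluxes $\overline F^B_\rho$, leaving $\sum_q J_f\omega_q\widehat F_{\rho,fq}$ on $\partial K$. Summing over elements cancels interior faces by single-valuedness and yields \eqref{eq:global-DG-mass}, hence conservation of mass in the periodic case.

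The same telescoping argument, applied to $\overline F^{B,d}_Y$, gives momentum and total-energy balance. Constant-state preservation holds because both $\mathcal L^H_K$ and $\mathcal L^L_K$ vanish at constant states: for $H$ this follows from $D^{\rm SBP}\mathbf 1=0$ and the equal-state limits, and for $L$ because all jumps are zero and the consistent fluxes and fluctuations reduce to zero. Finally, setting $\alpha_K=0$ recovers $\mathcal L^H_K$, which by Theorem~\ref{thm:GC-DG-subcell-equivalence} is GC-DG.
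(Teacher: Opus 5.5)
Your proposal is correct and follows the paper's proof. You contract the blended distortion block with $\rho_0\w_{ij}$, invoke the nodal companion identities for $r=H$ and $r=L$ (re-deriving the low-order one from Lemmas~\ref{lem:G-core-companion} and~\ref{lem:G-PLF} rather than citing \eqref{eq:GC-FV-low-subcell-density}), and use the shared boundary flux $\widehat F_\rho$ to reuse the telescoping of Theorem~\ref{thm:global-semidiscrete-mass}; your checks of face single-valuedness under differing $\alpha_K$, of the momentum--energy balance, and of constant-state preservation spell out steps the paper leaves implicit.
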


\begin{proof}
Equations \eqref{eq:GC-DG-nodal-mass} and
\eqref{eq:GC-FV-low-subcell-density} give
\begin{equation}
 \rho_0\w_{ij}:(\mathcal L_{K,A}^{r}U)_{ij}
 =(\mathcal D_{\rho,K}^{r}U)_{ij},
 \qquad r\in\{H,L\}.
 \label{eq:SC-endpoint-companion-identities}
\end{equation}
Contracting the distortion block of \eqref{eq:SC-blended-operator} with
$\rho_0\w_{ij}$, using the determinant chain rule and
$\w_{ij}:S_{A,ij}=0$, proves
\eqref{eq:SC-blended-nodal-density}.  Because the high- and low-order
boundary fluxes are both $\widehat F_\rho$, the blended boundary flux is
also $\widehat F_\rho$.  The global conclusion then follows directly from
the telescoping argument in
Theorem~\ref{thm:global-semidiscrete-mass}; no additional mass-balance
proof is required.
\end{proof}

\subsection{Complementary modal and face-jump indicators}

The blending coefficient must distinguish an unresolved physical wave from
a smooth, well-resolved polynomial.  We combine an element-interior modal
indicator with a face-jump indicator.  Their construction is given below
in the order in which it is evaluated.

\subsubsection{Elementwise modal indicator}

We first adapt the Persson--Peraire modal sensor~\cite{PerssonPeraire2006}
and its smooth blending map used by Hennemann et
al.~\cite[Sec.~4]{Hennemann2021}.  On the reference element, expand every
sensor group $Z$ in the tensor-product Legendre basis,
\begin{equation}
 Z_h(\xi_1,\xi_2)=\sum_{r,s=0}^N\widehat Z_{rs}
 P_r(\xi_1)P_s(\xi_2),
 \qquad
 e_{rs}^Z:=\gamma_r\gamma_s\|\widehat Z_{rs}\|_Z^2,
 \qquad \gamma_r:=\frac{2}{2r+1}.
 \label{eq:SC-modal-expansion}
\end{equation}
Here $\|\cdot\|_Z$ is the absolute value for the scalar group $Z=\rho p$,
the Frobenius norm for the embedded $3\times3$ distortion tensor $Z=\A$, and the
Euclidean norm for the thermal-impulse group $Z=\Jv$.  Thus all components
of a vector or tensor share one modal energy and are not assigned unrelated
thresholds.  Hennemann et al. use $\rho p$ alone for the Euler equations:
it reacts to both density and pressure jumps without separately sensing
velocity.  We retain this thermodynamic group and add $\A$ and $\Jv$ to
detect elastic and thermal waves.  A separate modal velocity group is not
used because it can strongly activate on resolved contacts, shear layers,
and vortical motion; interelement velocity information is nevertheless
present through the momentum group in the face-jump indicator below.

For each modal group, introduce its element magnitude and a scale-aware
modal-energy floor,
\begin{equation}
 \mathcal A_K^Z:=\max_{0\le i,j\le N}\|Z_{ij}^K\|_Z,
 \qquad
 \varepsilon_{{\rm mod},K}^Z
 :=\left[\max\left\{100\epsilon_{\rm mach}s_Z,
              \varepsilon_{\rm rel}\mathcal A_K^Z\right\}\right]^2,
 \qquad \varepsilon_{\rm rel}=10^{-6}.
 \label{eq:SC-modal-roundoff-floor}
\end{equation}
Here $s_Z>0$ is a fixed guard scale for the group $Z$; we use $s_Z=1$ for
all groups in the computations below.  The first term
protects an identically zero group, whereas the second prevents
roundoff-sized high modes of a large-magnitude, nearly constant field from
producing an $O(1)$ modal ratio.  Both are fixed numerical safeguards, not
shock-capturing parameters.

For $N\ge3$, we compare the energy in the highest and penultimate
directional modal bands with the corresponding total energy:
\begin{align}
 E_{Z,K}^{x}
 &=\max\left\{
 \frac{\sum_{s=0}^Ne_{Ns}^Z}
      {\max\{\sum_{r,s=0}^Ne_{rs}^Z,
              \varepsilon_{{\rm mod},K}^Z\}},
 \frac{\sum_{s=0}^Ne_{N-1,s}^Z}
      {\max\{\sum_{r=0}^{N-1}\sum_{s=0}^Ne_{rs}^Z,
              \varepsilon_{{\rm mod},K}^Z\}}
 \right\},\\
 E_{Z,K}^{y}
 &=\max\left\{
 \frac{\sum_{r=0}^Ne_{rN}^Z}
      {\max\{\sum_{r,s=0}^Ne_{rs}^Z,
              \varepsilon_{{\rm mod},K}^Z\}},
 \frac{\sum_{r=0}^Ne_{r,N-1}^Z}
      {\max\{\sum_{s=0}^{N-1}\sum_{r=0}^Ne_{rs}^Z,
              \varepsilon_{{\rm mod},K}^Z\}}
 \right\}.
 \label{eq:SC-modal-directional-energy}
\end{align}
For $N=2$, only the first ratio in each direction is retained; the
degree-dependent low-order cases are summarized in
Remark~\ref{rem:SC-degree-dependent-indicator}.  We then set
\begin{equation}
 E_K:=\max_{Z\in\{\rho p,\A,\Jv\}}
       \max\{E_{Z,K}^x,E_{Z,K}^y\},
 \qquad
 \Theta_N^{\rm mod}:=0.5\,10^{-1.8(N+1)^{1/4}}.
 \label{eq:SC-modal-sensor}
\end{equation}
If the heat-wave subsystem is inactive, the $\Jv$ group is omitted.  The
increasing logistic map of Hennemann et al. is
\begin{equation}
 \Lambda(r)=\frac{1}{1+\exp[-(\log9999)(r-1)]},
 \label{eq:SC-logistic-alpha}
\end{equation}
which satisfies $\Lambda(1)=1/2$ and changes rapidly from zero to one around
the prescribed threshold.  To remove numerically insignificant endpoint
values, define
\begin{equation}
 \mathcal C_{\rm clip}(a)=
 \begin{cases}
  0,&a<\epsilon_{\rm clip},\\
  a,&\epsilon_{\rm clip}\le a\le1-\epsilon_{\rm clip},\\
  1,&a>1-\epsilon_{\rm clip},
 \end{cases}
 \qquad
 \Lambda_{\rm clip}:=\mathcal C_{\rm clip}\circ\Lambda,
 \qquad \epsilon_{\rm clip}=10^{-3}.
 \label{eq:SC-indicator-clipping}
\end{equation}
The implemented modal activation is therefore
$\alpha_{M,K}=\Lambda_{\rm clip}(E_K/\Theta_N^{\rm mod})$.

\subsubsection{Face-jump indicator}

The modal indicator measures the loss of resolution inside one element,
but it can under-detect oscillations driven by a strong mismatch between
neighboring elements.  We therefore add a face-jump channel inspired by
the OE indicators of~\cite{PengSunWu2025,YangFuOEESDG}.  This channel uses
the groups
\begin{equation}
 \mathcal Z_F=\{\m,\mathcal E,\det\A,
                  \dev(\A^T\A),\Jv\},
 \label{eq:SC-face-groups}
\end{equation}
with the last group omitted when heat waves are inactive.  As defined in
\eqref{eq:GPR-shear-invariants}, $G=\A^T\A$ is the distortion metric and
\begin{equation}
 \dev(\A^T\A)
 =\A^T\A-\tfrac13\tr(\A^T\A)I_3
 =D
 \label{eq:SC-deviatoric-distortion-sensor}
\end{equation}
is its trace-free, or distortional, part.  This is the same tensor that
enters the GPR shear energy and shear stress through
\eqref{eq:GPR-shear-invariants}--\eqref{eq:J-thermal-closure}.  It is
therefore a direct indicator of elastic and shear waves.  At the same time,
$\det\A$ senses the complementary volumetric deformation and the induced
density.  Using these two groups separates volumetric and distortional
changes and avoids relying on individual entries of $\A$, whose jumps need
not directly measure the constitutively relevant shear deformation.

Euclidean and Frobenius group norms ensure that the components of $\m$,
$\dev(\A^T\A)$, and $\Jv$ are detected collectively.  For every
$Z\in\mathcal Z_F$, the modal energies $e_{rs}^Z$ used below are defined by
the same expansion \eqref{eq:SC-modal-expansion}, using the absolute value
for the scalar groups $\mathcal E$ and $\det\A$, the Euclidean norm for
$\m$ and $\Jv$, and the Frobenius norm for $\dev(\A^T\A)$.
The corresponding $\mathcal A_K^Z$ and
$\varepsilon_{{\rm mod},K}^Z$ are defined by
\eqref{eq:SC-modal-roundoff-floor} with the same group norm.

For $d\in\{x,y\}$, let $\mathcal P_{d,K}$ consist of $K$ and its available
$d$-direction neighbors.  Introduce the unit-interval reference coordinate
$\widehat\xi_d=(\xi_d+1)/2$ and normalized LGL weights
$\widehat\omega_q=\omega_q/2$, so that
$\sum_{q=0}^N\widehat\omega_q=1$.  The LGL-weighted patch mean is
\begin{equation}
 \langle Z_h\rangle_{\mathcal P_{d,K}}
 :=\frac{1}{|\mathcal P_{d,K}|}
 \sum_{K'\in\mathcal P_{d,K}}\sum_{i,j=0}^N
 \widehat\omega_i\widehat\omega_j Z_{ij}^{K'}.
 \label{eq:SC-face-patch-mean}
\end{equation}
We normalize every jump by the nodal patch amplitude
\begin{equation}
 \mathcal A_{d,K}^Z
 :=\max_{K'\in\mathcal P_{d,K}}\max_{0\le i,j\le N}
       \|Z_{ij}^{K'}\|_Z,
 \qquad
 \varepsilon_{d,K}^Z
 :=\max\left\{100\epsilon_{\rm mach}s_Z,
               \varepsilon_{\rm rel}\mathcal A_{d,K}^Z\right\},
 \label{eq:SC-face-magnitude-floor}
\end{equation}
and define
\begin{equation}
 \Delta_{d,K}^Z
 :=\max\left\{
 \max_{K'\in\mathcal P_{d,K}}\max_{0\le i,j\le N}
 \left\|Z_{ij}^{K'}-\langle Z_h\rangle_{\mathcal P_{d,K}}\right\|_Z,
 \varepsilon_{d,K}^Z\right\}.
 \label{eq:SC-face-patch-scale}
\end{equation}
The magnitude-relative term in \eqref{eq:SC-face-magnitude-floor} is
essential when a nearly constant group has a large background value: its
interface jump and patch variation can otherwise both be roundoff-sized, so
their quotient is spuriously $O(1)$.  For a resolved physical variation,
the patch amplitude in \eqref{eq:SC-face-patch-scale} remains the active
normalization.
For derivative order $m\in\{0,N\}$, define the normalized jump across the
interior $d$-normal faces of $K$ by
\begin{equation}
 \sigma_{m,d,K}^Z
 :=\frac{2m+1}{2(2N-1)m!}
 \frac{
 \left(\displaystyle\sum_{f\in\partial_d^{\rm int}K}\sum_{q=0}^N
 \widehat\omega_q
 \left\|\jump{\partial_{\widehat\xi_d}^mZ_h}_{f,q}\right\|_Z^2
 \right)^{1/2}}
 {\Delta_{d,K}^Z}.
 \label{eq:SC-normalized-face-jump}
\end{equation}
Here $\partial_d^{\rm int}K$ contains the interior faces of $K$ normal to
direction $d$; physical boundary faces are not included in the sensor.  In
the implementation, $\sigma_{m,d,K}^Z$ is set to zero whenever its unscaled
jump norm does not exceed the scale-aware floor $\varepsilon_{d,K}^Z$.
The $m=0$ quantity detects a jump of the state itself.  The $m=N$
quantity is a high-order OE channel that detects a mismatch in the
highest resolved derivative.  The two resulting activations are
\begin{align}
 I_{0,K}&:=\sum_{d\in\{x,y\}}\max_{Z\in\mathcal Z_F}
 \sigma_{0,d,K}^Z,
 &\alpha_{0,K}&:=\Lambda_{\rm clip}(I_{0,K}/C_{\rm OE}),
 \label{eq:SC-zero-face-indicator}\\
 \alpha_{N,d,K}&:=\max_{Z\in\mathcal Z_F}
 \left\{\vartheta_{N,K}^Z
 \Lambda_{\rm clip}(\sigma_{N,d,K}^Z/C_{\rm OE})\right\},
 &\alpha_{N,K}&:=1-\prod_{d\in\{x,y\}}(1-\alpha_{N,d,K}).
 \label{eq:SC-high-face-indicator}
\end{align}
Here $C_{\rm OE}>0$ is the face-sensor threshold and
\begin{equation}
 \vartheta_{N,K}^Z
 :=\begin{cases}
 \displaystyle
 \frac{\sum_{\max(r,s)=N}e_{rs}^Z}
      {\sum_{(r,s)\ne(0,0)}e_{rs}^Z},
 &\displaystyle \sum_{(r,s)\ne(0,0)}e_{rs}^Z>
                 \varepsilon_{{\rm mod},K}^Z,\\[8pt]
 0,&\text{otherwise},
 \end{cases}
 \qquad \vartheta_{N,K}^Z\in[0,1],
 \label{eq:SC-high-shell-weight}
\end{equation}
is the fraction of nonconstant modal energy in the highest tensor shell.
This factor prevents a high-derivative face jump from activating strong
dissipation when the corresponding highest-order content in $K$ is
negligible.  Finally, the two face channels are combined by
\begin{equation}
 \alpha_{F,K}:=1-(1-\alpha_{0,K})(1-\alpha_{N,K}).
 \label{eq:SC-face-union}
\end{equation}

\subsubsection{Final indicator}

The modal and face indicators contain complementary information.  Our
numerical experiments show that the modal indicator suppresses oscillations
effectively but can introduce excessive dissipation and erase fine
structures.  The face-jump indicator resolves some of these structures more
sharply, but can leave visible numerical oscillations near strong isolated
shocks.  For $N\ge3$, we therefore combine the two channels to balance
oscillation control and fine-structure resolution:
\begin{equation}
 \boxed{\alpha_K:=1-(1-\alpha_{F,K})(1-\alpha_{M,K})}.
 \label{eq:SC-alpha-overview}
\end{equation}
The reduced channel sets used for $N=1$ and $N=2$ are stated explicitly in
Remark~\ref{rem:SC-degree-dependent-indicator}.
The union lies in $[0,1]$ and is not smaller than either constituent
indicator.  Each logistic channel has already been mapped by
$\Lambda_{\rm clip}$; no additional clipping is applied after the unions.
Unlike a modal filter, the resulting coefficient only selects
the convex spatial-residual blend in \eqref{eq:SC-blended-operator}; it
does not postprocess the accepted polynomial and hence does not introduce
the incompatibility in \eqref{eq:SC-generic-limiter-obstruction}.

\begin{remark}[Polynomial-degree-dependent indicator channels]
\label{rem:SC-degree-dependent-indicator}
The indicator is specialized at the two lowest polynomial degrees.  To
state the three cases precisely, define the highest-band modal ratio
\begin{equation}
 E_K^{\rm top}:=\max_{Z\in\{\rho p,\A,\Jv\}}
 \max\left\{
 \frac{\sum_{s=0}^Ne_{Ns}^Z}
      {\max\{\sum_{r,s=0}^Ne_{rs}^Z,
              \varepsilon_{{\rm mod},K}^Z\}},
 \frac{\sum_{r=0}^Ne_{rN}^Z}
      {\max\{\sum_{r,s=0}^Ne_{rs}^Z,
              \varepsilon_{{\rm mod},K}^Z\}}
 \right\},
 \qquad
 \alpha_{M,K}^{\rm top}
 :=\Lambda_{\rm clip}(E_K^{\rm top}/\Theta_N^{\rm mod}),
 \label{eq:SC-top-modal-channel}
\end{equation}
where the $\Jv$ group is omitted when the thermal-impulse equation is
inactive.  With
$\alpha_{M,K}=\Lambda_{\rm clip}(E_K/\Theta_N^{\rm mod})$ denoting the full
modal channel defined after \eqref{eq:SC-indicator-clipping}, the implemented
indicator is
\begin{equation}
 \alpha_K=
 \begin{cases}
  \alpha_{0,K}, & N=1,\\[2pt]
  1-(1-\alpha_{0,K})(1-\alpha_{N,K})
    (1-\alpha_{M,K}^{\rm top}), & N=2,\\[2pt]
  1-(1-\alpha_{0,K})(1-\alpha_{N,K})
    (1-\alpha_{M,K}), & N\ge3,
 \end{cases}
 \label{eq:SC-degree-dependent-indicator}
\end{equation}
Thus, for $N=1$, the element-interior modal channel and the
highest-derivative face channel are disabled because the only nonconstant
mode is the resolved linear slope.  For $N=2$, the highest quadratic modal
band is retained, whereas the penultimate band is omitted because it is
again linear.  The complete highest/penultimate modal construction is used
only for $N\ge3$.  This prevents resolved low-order slopes from being
misidentified as under-resolved oscillations.
\end{remark}

\begin{remark}[The only tunable limiter parameter]
\label{rem:SC-only-tunable-parameter}
The threshold $C_{\rm OE}$ is the only user-tuned parameter in the present
global subcell limiter; the modal threshold, logistic map, roundoff floors,
and clipping tolerance are fixed by
\eqref{eq:SC-modal-roundoff-floor}--\eqref{eq:SC-indicator-clipping}.
Its appropriate value is problem-dependent and balances shock control against
the resolution of smooth and small-scale structures.  Larger values reduce
limiter activation, whereas smaller values move more elements toward the
dissipative GC-FV operator.
\end{remark}

\subsection{Local subcell blending as a lower-dissipation extension}
\label{sec:SC-local-extension}

The equivalent subcell forms derived in
Sections~\ref{sec:GC-DG-subcell-equivalent} and~\ref{sec:GC-FV-low-subcell} expose
the high- and low-order operators at every LGL subcell interface.  They
therefore also provide the algebraic foundation for a local subcell
limiter, without changing either the GC-DG or GC-FV building block.
Instead of the single elementwise coefficient $\alpha_K$, such methods
assign $\alpha_{k+1/2}^d\in[0,1]$ to each internal subcell interface and
direction~\cite{RuedaRamirezPaznerGassner2022,RuedaRamirezGassner2024}.
For the distortion block, the corresponding blend is simply
\begin{equation}
 \widetilde\Gamma_{A,k+1/2}^{\pm,B,d}
 =(1-\alpha_{k+1/2}^d)
   \widetilde\Gamma_{A,k+1/2}^{\pm,H,d}
   +\alpha_{k+1/2}^d
   \widetilde\Gamma_{A,k+1/2}^{\pm,L,d}.
 \label{eq:SC-local-A-blend}
\end{equation}
The same $\alpha_{k+1/2}^d$ is used for the two directed fluctuations on
the two sides of the interface; the conservative blocks and the $\Jv$
block are blended analogously.  Since both the high- and low-order
distortion operators satisfy the
same cofactor-contraction identities, linearity gives the single-valued
induced density flux
\begin{equation}
 \overline F_{\rho,k+1/2}^{B,d}
 =(1-\alpha_{k+1/2}^d)\overline F_{\rho,k+1/2}^{H,d}
  +\alpha_{k+1/2}^d\overline F_{\rho,k+1/2}^{L,d}.
 \label{eq:SC-local-density-blend}
\end{equation}
Thus the nodal determinant companion law and the semidiscrete global mass
balance remain exact.  When the local blend is evaluated at every
Runge--Kutta stage, it also retains the same stagewise companion structure
as the global blend: the limiter introduces no additional mass drift, and
the remaining drift is solely the temporal determinant error
described in Remark~\ref{rem:fully-discrete-mass-drift}.

What cannot be transferred directly is the indicator.  The elementwise
$\alpha_K$ constructed above does not determine how much dissipation should
be assigned to each direction and each subcell interface.  A local method
requires a new interfacewise indicator that remains shared by the two
directed fluctuations, localizes shocks without erasing nearby fine
structure, and is economical enough to evaluate at every interface and RK
stage.  Designing and validating such an indicator is outside the scope of
this work.  We therefore use the global elementwise limiter in all
computations and leave the local extension for future investigation.

\section{Time integration}
\label{sec:time}

The computations use the third-order strong-stability-preserving
Runge--Kutta method
\begin{align}
 U^{(1)}&=U^n+\Delta t\,L(U^n),\\
 U^{(2)}&=\tfrac34U^n+\tfrac14\bigl(U^{(1)}+\Delta t\,L(U^{(1)})\bigr),\\
 U^{n+1}&=\tfrac13U^n+\tfrac23\bigl(U^{(2)}+\Delta t\,L(U^{(2)})\bigr),
 \label{eq:ssprk3}
\end{align}
where $L$ contains the selected spatial operator and the point-local
relaxation source.  The determinant companion laws established above are
semidiscrete; their global consequence was established in
Theorem~\ref{thm:global-semidiscrete-mass}.  We now quantify the additional
mass drift introduced by time discretization.

\begin{remark}[Fully discrete mass drift]
\label{rem:fully-discrete-mass-drift}
Theorem~\ref{thm:global-semidiscrete-mass} gives an exact semidiscrete mass
balance for GC-FV and GC-DG, and
Theorem~\ref{thm:SC-companion-blending} extends it to the globally blended
operator.  This identity is not exact after Runge--Kutta time integration,
because a finite update of $\A$ does not satisfy the nonlinear determinant
chain rule.  We therefore measure the discrepancy between the fully discrete
mass change and the boundary flux used by the same Runge--Kutta step.  We call
this discrepancy the \emph{mass drift}; it is the mass-conservation defect of
the fully discrete method relative to the exact semidiscrete companion
balance.

Set $\mathcal M_h^{{\rm DG},n}:=\mathcal M_h^{\rm DG}(U^n)$, with
$\mathcal M_h^{\rm DG}$ defined in \eqref{eq:global-DG-mass}, and denote the
three SSP--RK3 stage fluxes by
\begin{equation}
 \mathcal F_{\partial\Omega}^{n,0}
 :=\mathcal F_{\partial\Omega}^{\rm DG}(U^n),\qquad
 \mathcal F_{\partial\Omega}^{n,1}
 :=\mathcal F_{\partial\Omega}^{\rm DG}(U^{(1)}),\qquad
 \mathcal F_{\partial\Omega}^{n,2}
 :=\mathcal F_{\partial\Omega}^{\rm DG}(U^{(2)}).
 \label{eq:ssprk3-stage-boundary-fluxes}
\end{equation}
The accumulated discrete outward mass flux is updated by
\begin{equation}
 \mathcal B_{\partial\Omega}^{0}=0,\qquad
 \mathcal B_{\partial\Omega}^{n+1}
 =\mathcal B_{\partial\Omega}^{n}
 +\Delta t_n\left(
 \frac16\mathcal F_{\partial\Omega}^{n,0}
 +\frac16\mathcal F_{\partial\Omega}^{n,1}
 +\frac23\mathcal F_{\partial\Omega}^{n,2}\right).
 \label{eq:ssprk3-accumulated-boundary-flux}
\end{equation}
The normalized fully discrete mass drift is then
\begin{equation}
 \varepsilon_M^n:=
 \frac{\left|\mathcal M_h^{{\rm DG},n}
 -\mathcal M_h^{{\rm DG},0}
 +\mathcal B_{\partial\Omega}^{n}\right|}
 {\mathcal M_h^{{\rm DG},0}}.
 \label{eq:fully-discrete-normalized-mass-drift}
\end{equation}
Thus $\varepsilon_M^n=0$ is precisely the fully discrete counterpart of the
global balance \eqref{eq:global-DG-mass}; a nonzero value measures the
mass-conservation defect introduced by time integration.  The same definition
applies to GC-FV after replacing the DG quantities by those in
\eqref{eq:global-FV-mass}.

For a Runge--Kutta method of order $p$ and a sufficiently smooth
semidiscrete solution, the one-step discrepancy satisfies
\begin{equation}
 \mathcal M_h^{{\rm DG},n+1}-\mathcal M_h^{{\rm DG},n}
 +\mathcal B_{\partial\Omega}^{n+1}
 -\mathcal B_{\partial\Omega}^{n}
 =O(\Delta t_n^{p+1}).
 \label{eq:fully-discrete-local-mass-drift}
\end{equation}
Consequently, the numerator of
\eqref{eq:fully-discrete-normalized-mass-drift} accumulates as
$O(\Delta t^p)$ over a fixed time interval.  Under periodic boundary
conditions, $\mathcal B_{\partial\Omega}^{n}=0$, and hence
\begin{equation}
 \varepsilon_M^n
 =\frac{\left|\mathcal M_h^{{\rm DG},n}
 -\mathcal M_h^{{\rm DG},0}\right|}
 {\mathcal M_h^{{\rm DG},0}}
 =O(\Delta t^p).
 \label{eq:fully-discrete-mass-drift}
\end{equation}
For SSP--RK3 the one-step discrepancy is $O(\Delta t^4)$, and the accumulated
mass drift is $O(\Delta t^3)$.  This estimate requires a sufficiently smooth
semidiscrete trajectory with uniformly bounded time derivatives.  It does not
apply during the initial transient generated when a nodally sampled
discontinuous initial condition produces $O(1)$ jumps between neighboring
nodes.  Any larger drift created in those first steps is a fully discrete
determinant-chain-rule defect, not an additional defect of the spatial
subcell blend, which satisfies the semidiscrete companion identity at every
stage.
\end{remark}
\section{Numerical experiments}
\label{sec:numerics}

The experiments assess smooth-solution accuracy, resolution of fluid,
elastic, and thermal-wave structures, discrete mass behavior relative to the
semidiscrete companion law, and robustness in two dimensions.  Unless stated
otherwise, all tests use SSP--RK3 and the global subcell blend
\eqref{eq:SC-blended-operator} with the combined modal/face-jump indicator of
Section~\ref{sec:gpr-subcell}, including its degree-dependent form
\eqref{eq:SC-degree-dependent-indicator}.  Density is always recovered as
$\rho_h=\rho_0\det\A_h$; it is not independently evolved.
For every test, the prescribed initial condition is evaluated directly at
the LGL nodal points and used as the corresponding nodal degrees of freedom.
All experiments are chosen so that the relaxation sources are not severely
stiff on the simulated time scale.  Much stiffer relaxation regimes arise in
other GPR computations~\cite{Dumbser2016,JacksonNikiforakis2019,
PeshkovEtAl2019}, but their treatment generally requires time integration
designed specifically for stiff sources.  Because the present study uses
explicit SSP--RK3 throughout, such time discretizations and the associated
strong-relaxation limit are beyond its scope.
Whenever the limiter is active in the experiments below, we use
$C_{\rm OE}=0.1$ for $N=2$ and $C_{\rm OE}=0.02$ for $N\ge3$.  These are the
fixed choices for the present test suite rather than universal recommended
values; as emphasized in Remark~\ref{rem:SC-only-tunable-parameter}, the
appropriate threshold is problem-dependent.  All other indicator constants
are kept fixed at the values stated in Section~\ref{sec:gpr-subcell}.

\subsection{Translating isentropic vortex}
\label{sec:isentropic-vortex}

The translating isentropic vortex is a smooth benchmark for measuring
spatial convergence without a manufactured forcing term.  Variants of this
test are standard in high-order Euler and GPR calculations
\cite{Dumbser2016,ChenShu2017}.  We use it to assess the high-order accuracy of
the unlimited GC-DG scheme and, in a separate temporal-refinement study, the
fully discrete mass-drift estimate in
Remark~\ref{rem:fully-discrete-mass-drift}.  The shear and thermal-impulse
sectors are inactive, so the Euler vortex supplies exact density, momentum,
and energy fields for the reduced GPR system.

The computational domain is $\Omega=[-10,10]^2$, partitioned into a uniform
$M\times M$ Cartesian mesh and equipped with periodic boundary conditions in
both directions.  The enlarged box reduces the exponentially small mismatch
of the standard vortex profile at the periodic cuts.  We take
$\rho_0=R_{\rm gas}=1$, $c_v=2.5$,
$\gamma=1+R_{\rm gas}/c_v=1.4$, $p_\infty=0$, vortex strength
$\beta_{\rm v}=5$, and background velocity $\mathbf V=(1,1)^T$.  The vortex
center is initially at the origin.  With
$\boldsymbol\xi=(x,y)^T-\mathbf Vt$, $r^2=|\boldsymbol\xi|^2$, define
\begin{equation}
 \omega(r)=\frac{\beta_{\rm v}}{2\pi}e^{(1-r^2)/2},\qquad
 \mathcal T_{\rm v}(r)=1-\frac{(\gamma-1)\beta_{\rm v}^2}{8\gamma\pi^2}e^{1-r^2}.
 \label{eq:vortex-omega-theta}
\end{equation}
The exact Euler fields are
\begin{equation}
 \rho_e=\mathcal T_{\rm v}^{1/(\gamma-1)},\qquad
 p_e=\mathcal T_{\rm v}^{\gamma/(\gamma-1)},\qquad
 \mathbf v_e=\mathbf V+\omega(-\xi_2,\xi_1)^T,
 \label{eq:vortex-euler-fields}
\end{equation}
with $\m_e=\rho_e\mathbf v_e$ and
$\mathcal E_e=p_e/(\gamma-1)+\rho_e|\mathbf v_e|^2/2$.  The initial
distortion is
\begin{equation}
 \A(x,y,0)=\left(\frac{\rho_e(x,y,0)}{\rho_0}\right)^{1/3}I_3,
 \label{eq:vortex-initial-distortion}
\end{equation}
so that $\rho_0\det\A=\rho_e$ initially.  We set $c_s=0$ and
$S_A=0$, and omit the decoupled thermal-impulse equation.  In evaluating the
periodic exact solution, each component of $\boldsymbol\xi$ is replaced by
its shortest periodic displacement.

The evolved variables are initialized pointwise at the LGL nodes and advanced
by SSP--RK3 to $t_f=0.5$ with CFL number $0.10$.  The subcell limiter is
disabled, $\alpha_K=0$, because this is a smooth accuracy test.  Density is
not independently evolved; throughout the computation it is recovered as
$\rho_h=\rho_0\det\A_h$.  For
$a\in\{\rho,v_1,v_2\}$, the discrete error and the successive refinement
rate are
\begin{equation}
 E_{a,2}(M,N)=\left[\sum_K\sum_{i,j=0}^{N}
 |J_K|\omega_i\omega_j\bigl(a_h(\boldsymbol x_{ij},t_f)-a_e(\boldsymbol x_{ij},t_f)\bigr)^2\right]^{1/2},
 \qquad
 r_a=\log_2\frac{E_{a,2}(M,N)}{E_{a,2}(2M,N)},
 \label{eq:vortex-error-order}
\end{equation}
where $J_K$ is the Cartesian reference-to-physical Jacobian, $\omega_i$ are the
one-dimensional LGL weights, and $\boldsymbol x_{ij}$ are the element nodes.  The reported
velocity error is
$E_{\mathbf v,2}=(E_{v_1,2}^2+E_{v_2,2}^2)^{1/2}$, with the analogous rate
$r_{\mathbf v}$.

\begin{table}[H]
\centering
\footnotesize
\setlength{\tabcolsep}{7pt}
\begin{tabular}{|c|r|rr|rr|}
\hline
$N$ & $M$ & $E_{\rho,2}$ & $r_\rho$ & $E_{\mathbf v,2}$ & $r_{\mathbf v}$\\
\hline
\multirow{4}{*}{$1$}
 & 16  & $2.9\times10^{-1}$ & --   & $8.2\times10^{-1}$ & --\\
 & 32  & $1.1\times10^{-1}$ & 1.36 & $3.0\times10^{-1}$ & 1.47\\
 & 64  & $3.4\times10^{-2}$ & 1.73 & $9.0\times10^{-2}$ & 1.73\\
 & 128 & $9.1\times10^{-3}$ & 1.90 & $2.4\times10^{-2}$ & 1.88\\
\hline
\multirow{4}{*}{$2$}
 & 16  & $7.7\times10^{-2}$ & --   & $2.3\times10^{-1}$ & --\\
 & 32  & $1.6\times10^{-2}$ & 2.30 & $4.3\times10^{-2}$ & 2.38\\
 & 64  & $3.4\times10^{-3}$ & 2.20 & $7.7\times10^{-3}$ & 2.48\\
 & 128 & $7.3\times10^{-4}$ & 2.23 & $1.4\times10^{-3}$ & 2.42\\
\hline
\multirow{4}{*}{$3$}
 & 16  & $1.8\times10^{-2}$ & --   & $6.3\times10^{-2}$ & --\\
 & 32  & $2.6\times10^{-3}$ & 2.77 & $7.1\times10^{-3}$ & 3.13\\
 & 64  & $2.3\times10^{-4}$ & 3.53 & $5.9\times10^{-4}$ & 3.60\\
 & 128 & $1.5\times10^{-5}$ & 3.88 & $3.1\times10^{-5}$ & 4.26\\
\hline
\multirow{4}{*}{$4$}
 & 16  & $5.2\times10^{-3}$ & --   & $1.3\times10^{-2}$ & --\\
 & 32  & $2.6\times10^{-4}$ & 4.35 & $6.1\times10^{-4}$ & 4.46\\
 & 64  & $1.2\times10^{-5}$ & 4.38 & $2.5\times10^{-5}$ & 4.58\\
 & 128 & $6.9\times10^{-7}$ & 4.16 & $1.3\times10^{-6}$ & 4.29\\
\hline
\multirow{4}{*}{$5$}
 & 16  & $1.2\times10^{-3}$ & --   & $3.3\times10^{-3}$ & --\\
 & 32  & $3.8\times10^{-5}$ & 5.00 & $1.1\times10^{-4}$ & 4.95\\
 & 64  & $1.1\times10^{-6}$ & 5.10 & $2.1\times10^{-6}$ & 5.65\\
 & 128 & $1.9\times10^{-8}$ & 5.86 & $3.1\times10^{-8}$ & 6.09\\
\hline
\end{tabular}
\caption{\textbf{Translating isentropic vortex:} density and velocity-vector
$L^2$ errors of the unlimited GC-DG scheme at $t_f=0.5$.  Here $M$ is the
number of elements in each direction and $N$ is the polynomial degree.
Errors are displayed to two significant figures; rates are evaluated before
rounding.}
\label{tab:vortex-convergence}
\end{table}

Both density and velocity errors decrease systematically under refinement.
For $N=1,3,5$, the finer meshes approach the nominal order $N+1$.  For
$N=2,4$, the observed rates approach $N+1$ less clearly over the tested
mesh sequence.  This pattern resembles, and may be related to, odd--even
convergence effects reported for DGSEM discretizations of the Euler
equations with local LF interface dissipation
\cite{HindenlangGassner2020}; the present data alone do not establish such
a mechanism.  Nevertheless, irrespective of the detailed odd--even
mechanism, this experiment demonstrates the high-order convergence of GC-DG
and provides numerical evidence that the projected two-state construction and
its SBP lift retain the intended accuracy.

To test the fully discrete estimate in
Remark~\ref{rem:fully-discrete-mass-drift}, we use the same periodic vortex
but hold the spatial discretization fixed at $(N,M)=(3,8)$ and prescribe the
time steps
\begin{equation}
 \Delta t_\ell=2^{-\ell}(2.0\times10^{-2}),\qquad
 \ell=0,1,2,3,
 \label{eq:vortex-mass-time-steps}
\end{equation}
up to $t_f=0.5$, corresponding to $25$, $50$, $100$, and $200$ SSP--RK3
steps.  The coarse fixed mesh deliberately keeps the temporal mass drift
above the roundoff floor; spatial convergence is irrelevant for this test
because every member of the semidiscrete family already satisfies the same
global companion law.  Since the boundary is periodic,
$\mathcal B_{\partial\Omega}^{n}=0$ at every time level, and
\eqref{eq:fully-discrete-normalized-mass-drift} reduces to the normalized
mass drift.  We measure both its final value $\varepsilon_M^{n_f}$ and
\begin{equation}
 \varepsilon_M^{\max}:=\max_{0\le n\le n_f}\varepsilon_M^n,
 \qquad
 r_M^\star=\log_2\frac{\varepsilon_M^\star(\Delta t)}
                         {\varepsilon_M^\star(\Delta t/2)},
 \qquad \star\in\{n_f,\max\}.
 \label{eq:vortex-mass-drift-rate}
\end{equation}
The maximum-in-time value prevents an accidental cancellation at $t_f$ from
masking the temporal rate.  The calculation is run without subcell blending
and with a fixed step size.  A single MPI rank is used so that changes in the
parallel reduction order do not contaminate this small defect.

\begin{table}[H]
\centering
\footnotesize
\setlength{\tabcolsep}{7pt}
\begin{tabular}{|c|r|rr|rr|}
\hline
$\Delta t$ & steps
 & $\varepsilon_M^{n_f}$ & $r_M^{n_f}$
 & $\varepsilon_M^{\max}$ & $r_M^{\max}$\\
\hline
$2.0\times10^{-2}$ & 25
 & $5.0186\times10^{-9}$ & --
 & $5.0186\times10^{-9}$ & --\\
$1.0\times10^{-2}$ & 50
 & $6.2812\times10^{-10}$ & 2.998
 & $6.2812\times10^{-10}$ & 2.998\\
$5.0\times10^{-3}$ & 100
 & $7.8524\times10^{-11}$ & 3.000
 & $7.8524\times10^{-11}$ & 3.000\\
$2.5\times10^{-3}$ & 200
 & $9.7855\times10^{-12}$ & 3.004
 & $9.7855\times10^{-12}$ & 3.004\\
\hline
\end{tabular}
\caption{\textbf{Translating isentropic vortex:} temporal convergence of
the fully discrete normalized mass drift on the fixed $(N,M)=(3,8)$ mesh.
The boundary is periodic, the GC-DG scheme is unlimited, and SSP--RK3 uses
the prescribed constant time step.}
\label{tab:vortex-mass-drift}
\end{table}

The mass drift decreases by almost exactly a factor of eight whenever the time
step is halved, giving the observed orders $2.998$, $3.000$, and $3.004$.
Moreover, it grows monotonically in magnitude in these runs, so its
final and maximum-in-time values coincide to the displayed digits.  By
contrast, the density $L^2$ error remains approximately
$1.1886\times10^{-1}$ for all four time steps because it is already dominated
by the deliberately coarse spatial mesh.  Thus the temporal experiment in
Table~\ref{tab:vortex-mass-drift} verifies
$\varepsilon_M^{n_f},\varepsilon_M^{\max}=O(\Delta t^3)$ without conflating
this mass drift with the ordinary spatial error of the numerical
solution.

\subsection{One-dimensional Riemann problems}
\label{sec:riemann-tests}

Two planar Riemann problems isolate complementary wave systems: nonlinear
elastic waves governed by a general solid EOS and the homogeneous GPR system
with active thermal impulse.  The problems are computed on thin
two-dimensional Cartesian strips to exercise the
same multidimensional implementation used below.  The data are independent of
$y$, the $y$ boundaries are periodic, and the $x$ boundaries are
transmissive.  In addition to the solution profiles, we report the final
elementwise blending coefficient $\alpha_K$ and the fully discrete,
boundary-compensated mass drift $\varepsilon_M^n$ defined in
\eqref{eq:fully-discrete-normalized-mass-drift}.  Its accumulated boundary
contribution is computed from the same discrete flux
$\mathcal F_{\partial\Omega}^{\rm DG}$ as in
Theorem~\ref{thm:global-semidiscrete-mass}, using the SSP--RK3 stage update
\eqref{eq:ssprk3-accumulated-boundary-flux}.  The global subcell blend has
the same boundary flux by Theorem~\ref{thm:SC-companion-blending}.
For the reported final times, both tests have zero far-field normal velocity
and no wave reaches the boundary, so the accumulated boundary contribution
vanishes.

\subsubsection{Five-wave nonlinear elastic Riemann problem}
\label{sec:rp2}

The first Riemann test is the five-wave copper shock tube of Zhang et
al.~\cite{Zhang2017}, based on the nonlinear elastic model and Riemann-wave
analysis of Barton et al.~\cite{BartonEtAl2009}.  The published data are given
in terms of the deformation gradient $F$ and specific entropy $s$; the GPR
variable is $\A=F^{-1}$.  With $x_d=0.5$, the left and right states are
\begin{equation}
 \begin{aligned}
 \vct_L&=(0,1,0)^T,&
 F_L&=\begin{pmatrix}0.95&0&0\\0.05&1&0\\0&0&1\end{pmatrix},&
 s_L&=10^{-3},\\
 \vct_R&=(0,0,0)^T,&
 F_R&=I_3,&s_R&=0.
 \end{aligned}
 \label{eq:RP2-data}
\end{equation}
The density is initialized consistently as
$\rho=\rho_0\det\A$.  We use the Barton--Romenski copper closure of
Section~\ref{sec:model}, with
\begin{equation}
 \rho_0=8.93,\quad c_{L,0}=4.6,\quad b_0=2.1,\quad
 c_v=3.9\times10^{-4},\quad T_0=300,
 \qquad (\alpha_B,\beta_B,\gamma_B)=(1,3,2),
 \label{eq:RP2-material}
\end{equation}
in the units used in~\cite{Zhang2017}.  Distortion relaxation is inactive,
and the thermal-impulse subsystem is omitted.  This choice is useful because it
tests that the geometric construction and its limiter do not rely on an
ideal- or stiffened-gas closure.

We compute on $[0,1]\times[0,0.01]$ with $400\times4$ square elements,
$N=3$, CFL number $0.15$, $C_{\rm LF}=1.2$, and $t_f=0.06$.  The
reference solution is obtained by composing the five Lax curves as in
\cite{BartonEtAl2009}: rarefaction curves are integrated along the relevant
eigenvectors and shocks satisfy the full Rankine--Hugoniot relations.  Its
wave pattern is, from left to right, two rarefactions, a contact, a third
rarefaction, and a shock~\cite{Zhang2017}.

Figure~\ref{fig:RP2-profiles} compares the density, normal velocity, pressure,
shear flux-stress, and distortion with this iterative Riemann solution.  The
method resolves the five distinct waves and the intervening plateaus, including
the coupled changes in $\sigma_{21}$ and $A_{21}$, without visible Gibbs
oscillations.  Figure~\ref{fig:RP2-diagnostics} reports the final elementwise
blending coefficient $\alpha_K$ and the mass-drift history.  The coefficient
is concentrated near the wave fronts; because the far-field normal
velocities vanish and no wave reaches the boundary, the boundary mass flux is
zero and the final normalized fully discrete mass drift
$\varepsilon_M^{n_f}$ in \eqref{eq:fully-discrete-normalized-mass-drift}
is about $9.4\times10^{-13}$.

\begin{figure}[htbp]
\centering
\plotsubfigure{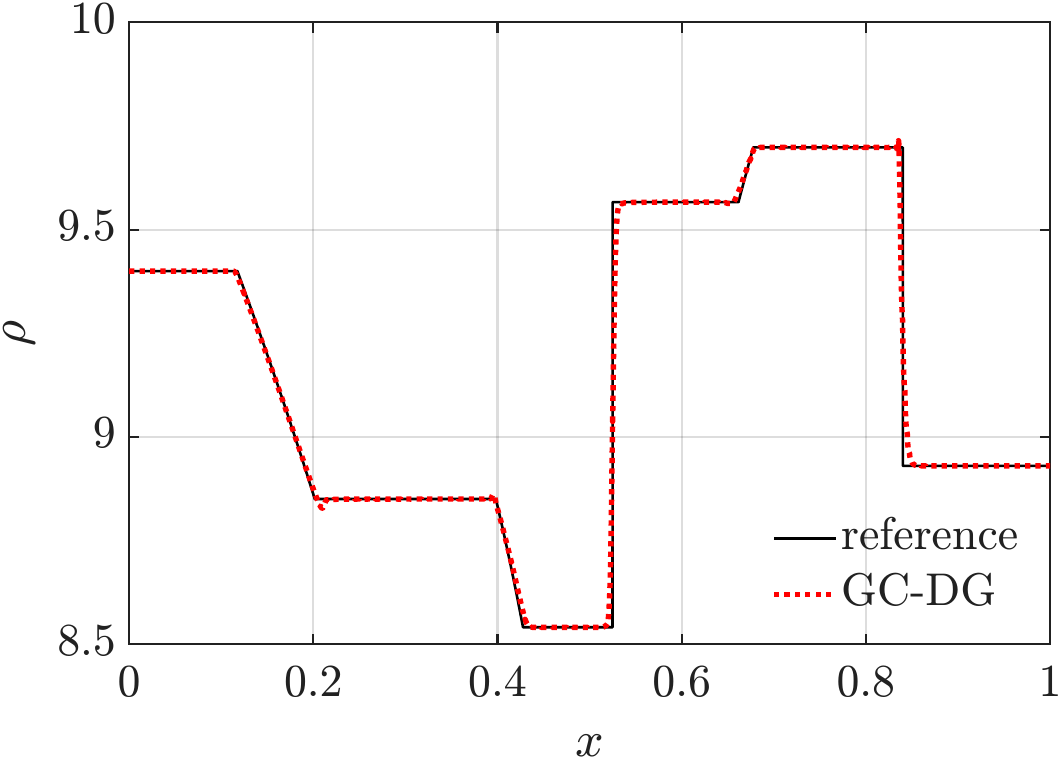}{Density $\rho$.}\hfill
\plotsubfigure{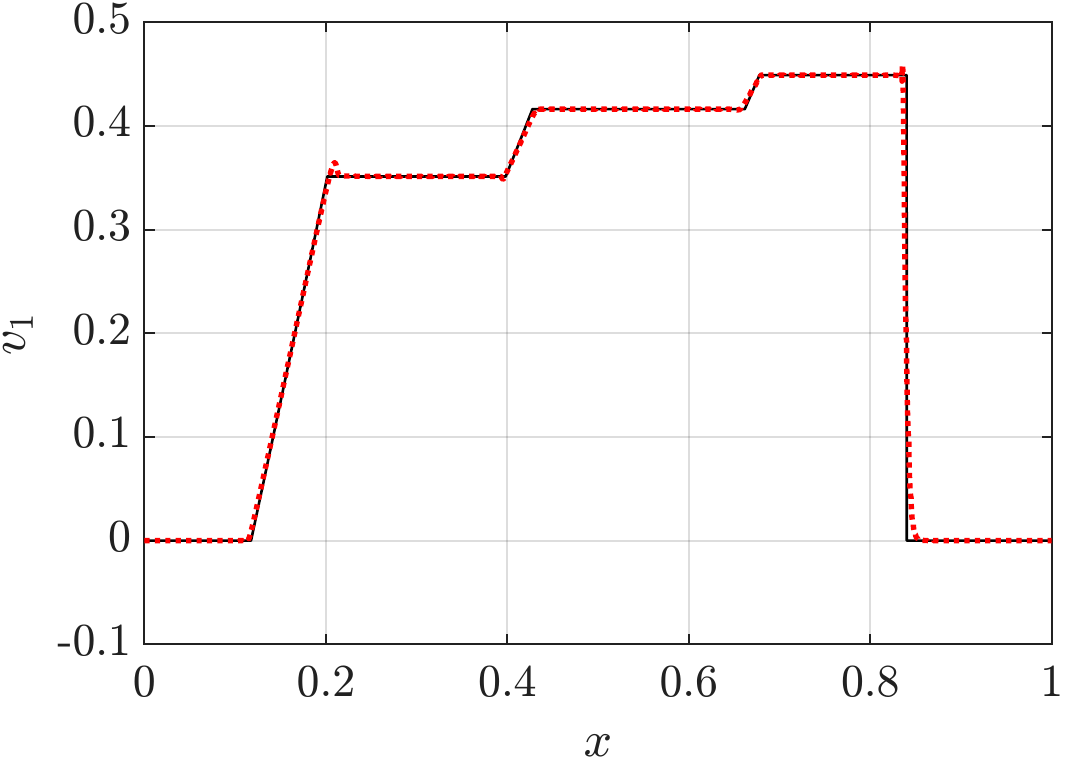}{Normal velocity $v_1$.}\hfill
\plotsubfigure{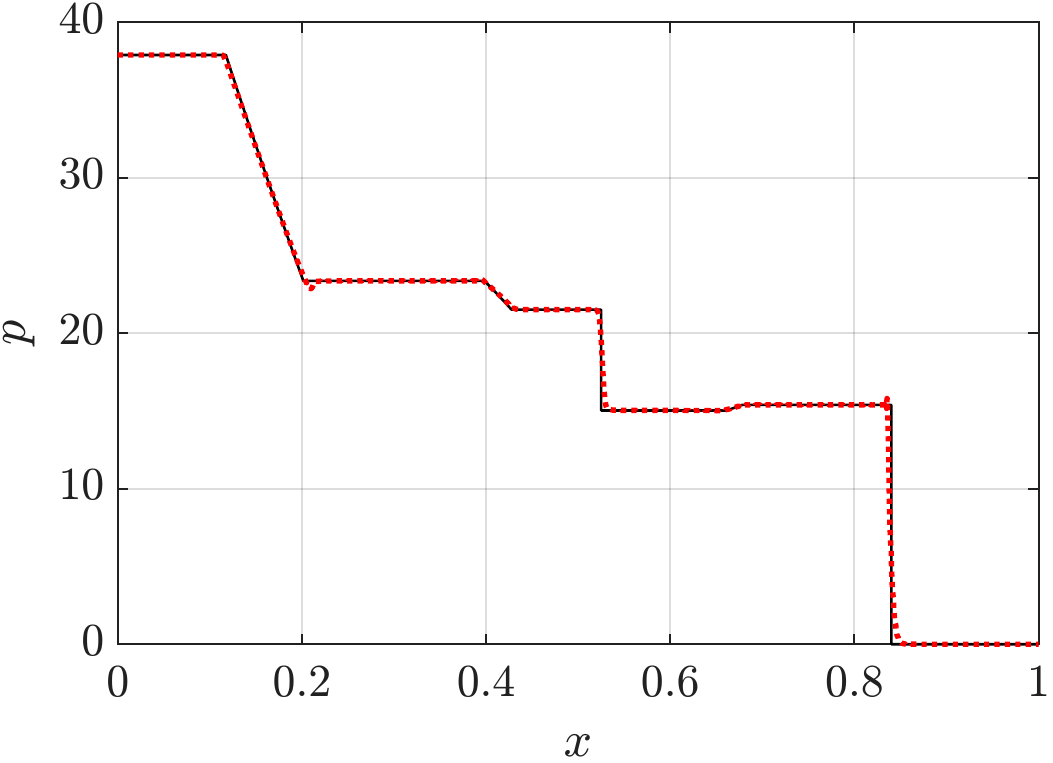}{Pressure $p$.}
\par\medskip
\plotsubfigure[0.40\textwidth]{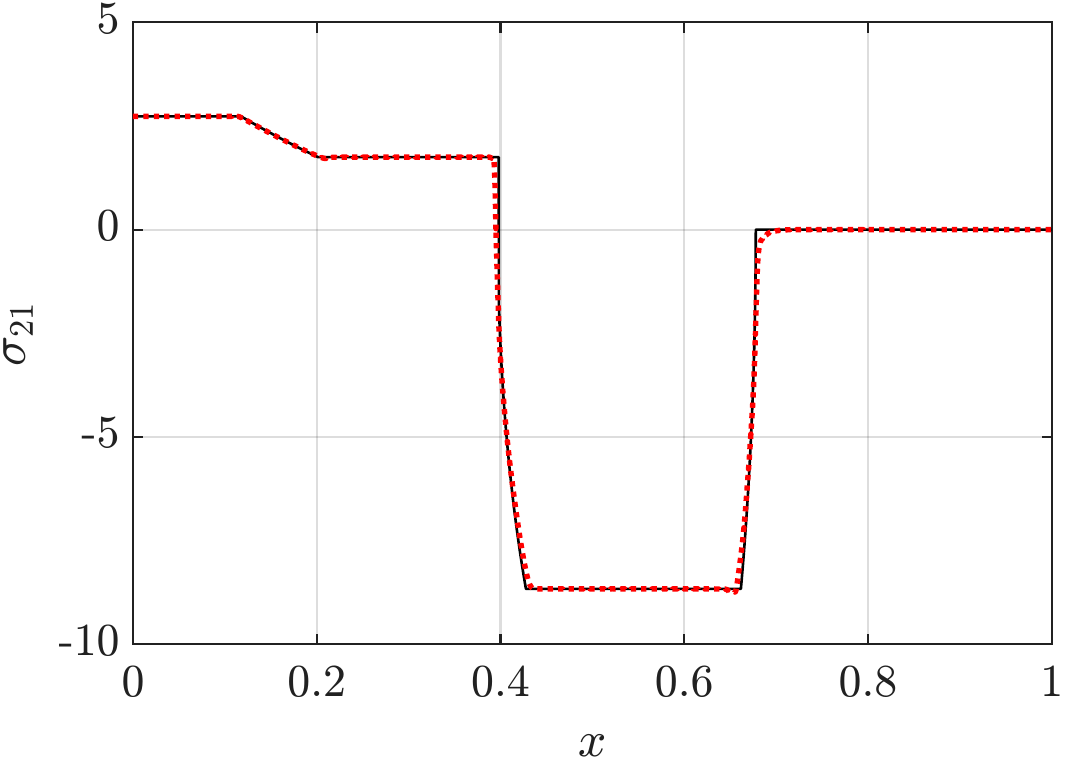}{Shear flux-stress $\sigma_{21}$; the
corresponding Cauchy component is $-\sigma_{21}$.}\hspace{0.05\textwidth}
\plotsubfigure[0.40\textwidth]{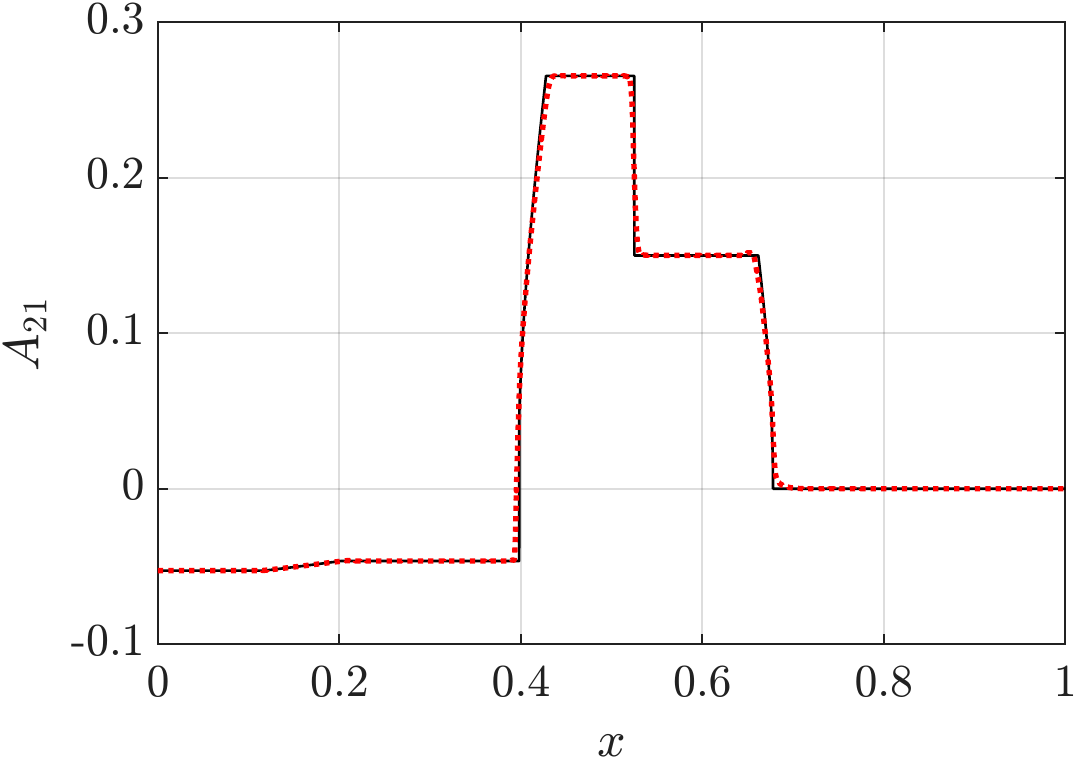}{Distortion component $A_{21}$.}
\caption{\textbf{Five-wave nonlinear elastic Riemann problem:} GC-DG profiles
at $t_f=0.06$ compared with the iterative nonlinear-elastic Riemann solution.
The red dotted curves are the GC-DG results, and the black solid curves are
the reference solution.  Panels show (a) density, (b) normal velocity,
(c) pressure, (d) shear flux-stress, and (e) distortion.}
\label{fig:RP2-profiles}
\end{figure}

\begin{figure}[htbp]
\centering
\plotsubfigure[0.47\textwidth]{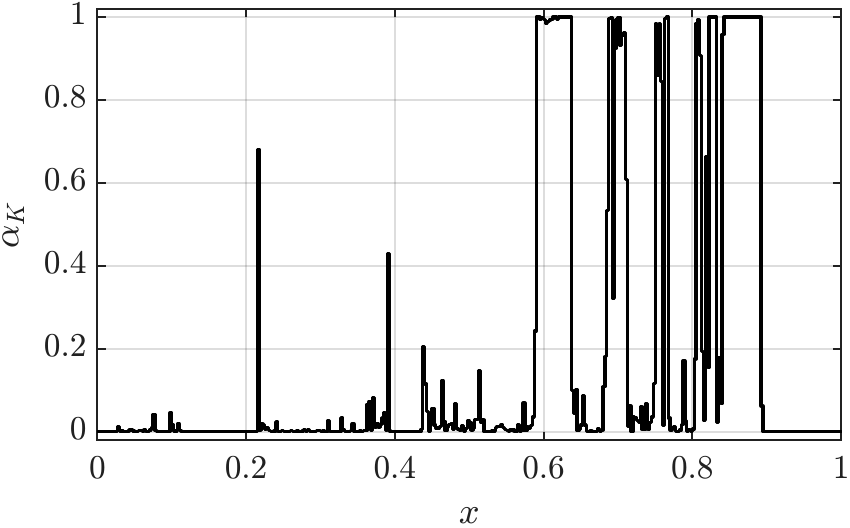}{Final elementwise blending coefficient $\alpha_K$.}\hfill
\plotsubfigure[0.47\textwidth]{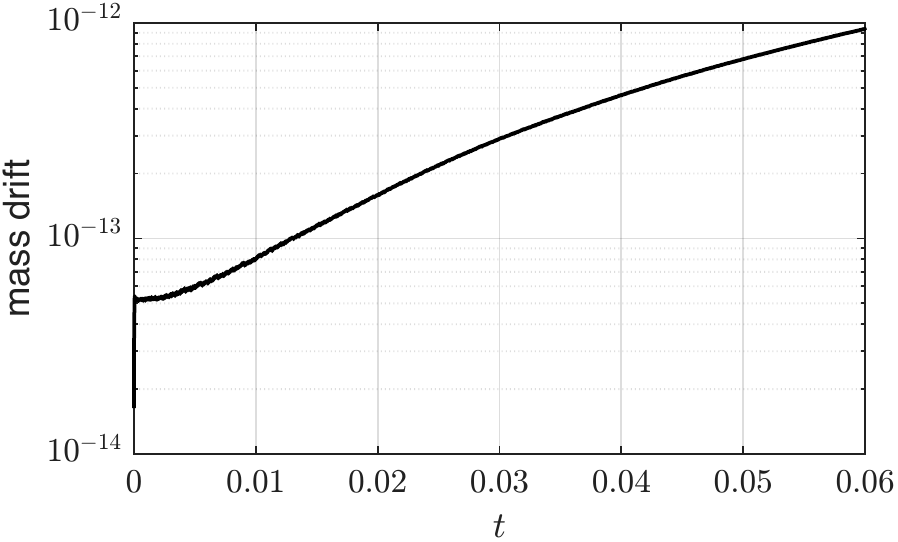}{Mass drift $\varepsilon_M^n$.}
\caption{\textbf{Five-wave nonlinear elastic Riemann problem:} Final
elementwise blending coefficient at $t_f=0.06$ and the fully discrete
mass-drift history: (a) $\alpha_K$ and (b) boundary-compensated mass drift.}
\label{fig:RP2-diagnostics}
\end{figure}

\subsubsection{Seven-wave homogeneous GPR Riemann problem}
\label{sec:rp3}

The final one-dimensional test is based on the third Riemann problem of
Dumbser et al.~\cite{Dumbser2026}, which exhibits the seven-wave structure of
the homogeneous GPR system.  It activates both elastic shear and thermal waves,
and is therefore substantially richer than an Euler shock tube.  The
macroscopic left and right data at $x_d=0$ are
\begin{equation}
 (\rho,v_1,v_2,p)_L=(1,0,-0.2,1),\qquad
 (\rho,v_1,v_2,p)_R=(0.5,0,0.2,0.5).
 \label{eq:RP3-data}
\end{equation}
Because density is not an independent variable in the reduced formulation,
we initialize
\begin{equation}
 \A_a=\left(\frac{\rho_a}{\rho_0}\right)^{1/3}I_3,
 \qquad \Jv_a=\mathbf0,\qquad a\in\{L,R\},
 \label{eq:RP3-distortion-data}
\end{equation}
which enforces $\rho_a=\rho_0\det\A_a$ exactly.  We take an ideal gas with
$\rho_0=1$, $\gamma=1.4$, $c_v=1$, and $R_{\rm gas}=0.4$, together with
$c_s=c_h=1$.  The relaxation times are
$\tau_1=\tau_2=10^{20}$, so the calculation probes the homogeneous
hyperbolic dynamics rather than a stiff relaxation limit.

The domain is $[-0.5,0.5]\times[-0.005,0.005]$, discretized by
$400\times4$ square elements with $N=3$.  We use CFL number $0.15$,
$C_{\rm LF}=1$, the thermal-wave estimate of
Section~\ref{sec:GC-FV-wave-speed}, and evolve to $t_f=0.2$.  For comparison,
we compute a self-similar reference by composing rarefaction curves and
straight-path DLM shock curves for the same reduced system; this is the
nonconservative analogue of the iterative construction in
\cite{BartonEtAl2009}, with the generalized jump condition of
\cite{DalMaso1995,Pares2006} imposed across discontinuities.

The six profiles in Fig.~\ref{fig:RP3-profiles} display the seven-wave
structure emphasized in~\cite{Dumbser2026}.  GC-DG separates all seven waves and
closely reproduces the small intermediate plateaus in $\rho$, $A_{11}$,
$A_{21}$, and $J_1$.  Figure~\ref{fig:RP3-diagnostics} reports the final
elementwise blending coefficient $\alpha_K$ and the mass-drift history.  The
coefficient is localized near the wave fronts and remains small on the
resolved plateaus; since $v_1=0$ in both far fields, there is no net mass flux
through the $x$ boundaries and the final normalized fully discrete mass drift is
$\varepsilon_M^{n_f}=6.6\times10^{-9}$.

\begin{figure}[htbp]
\centering
\plotsubfigure{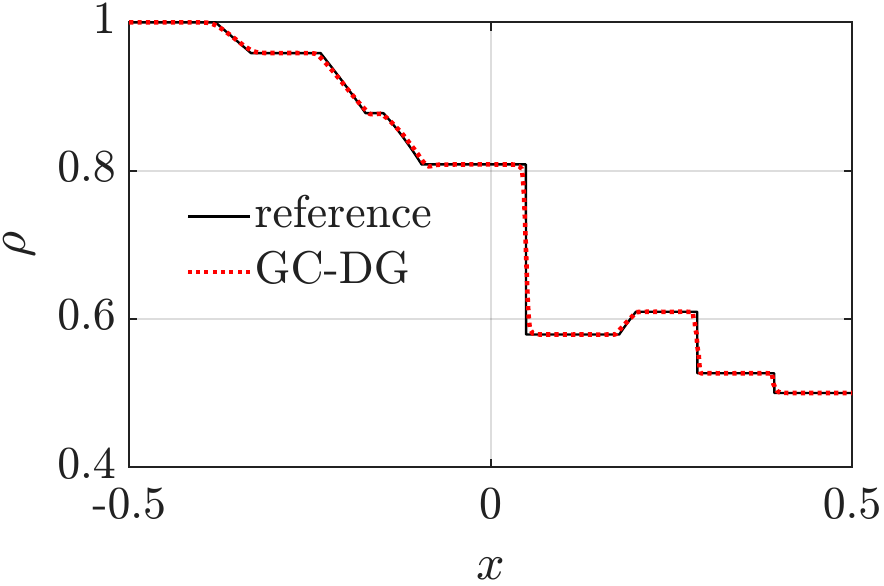}{Density $\rho$.}\hfill
\plotsubfigure{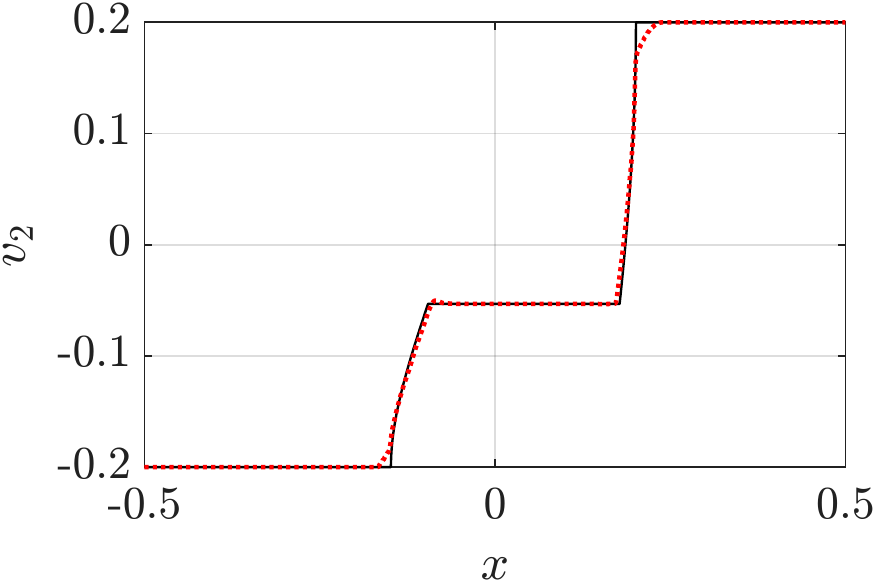}{Transverse velocity $v_2$.}\hfill
\plotsubfigure{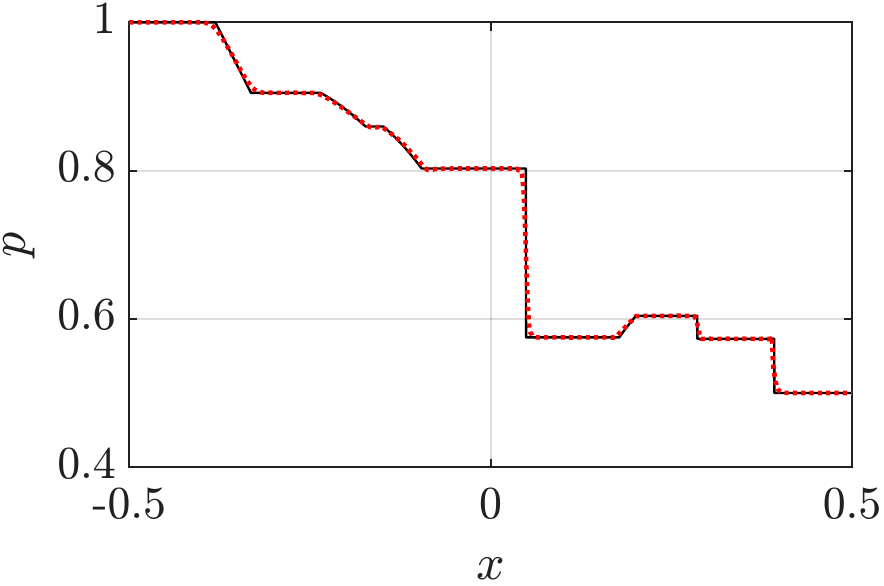}{Pressure $p$.}
\par\medskip
\plotsubfigure{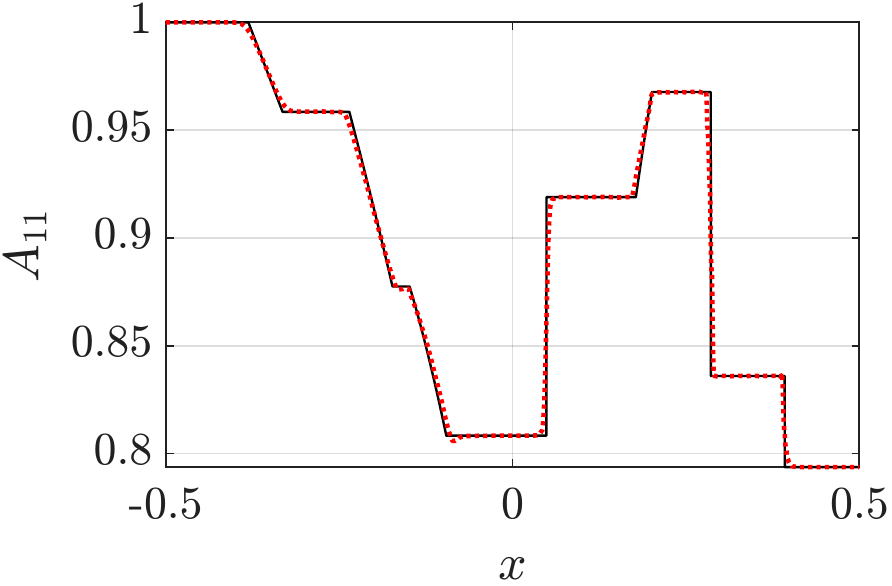}{Distortion component $A_{11}$.}\hfill
\plotsubfigure{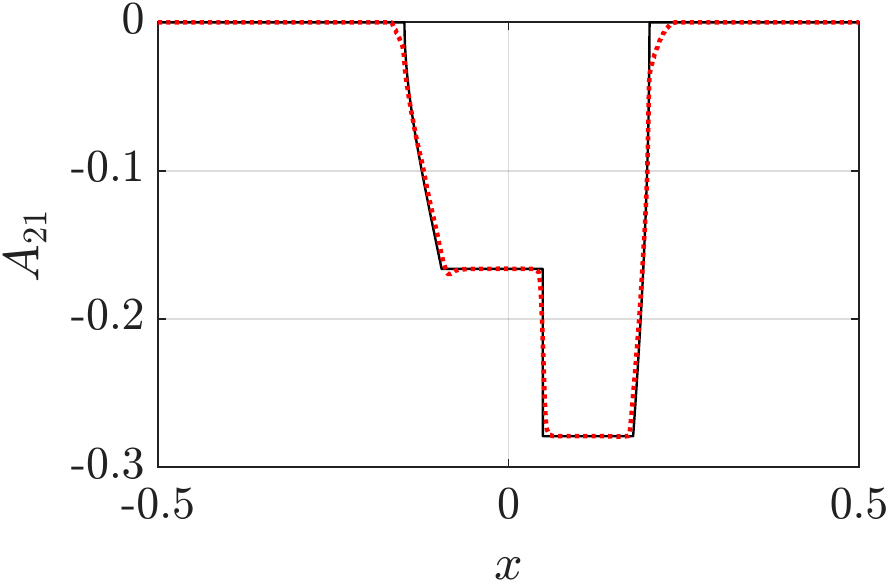}{Distortion component $A_{21}$.}\hfill
\plotsubfigure{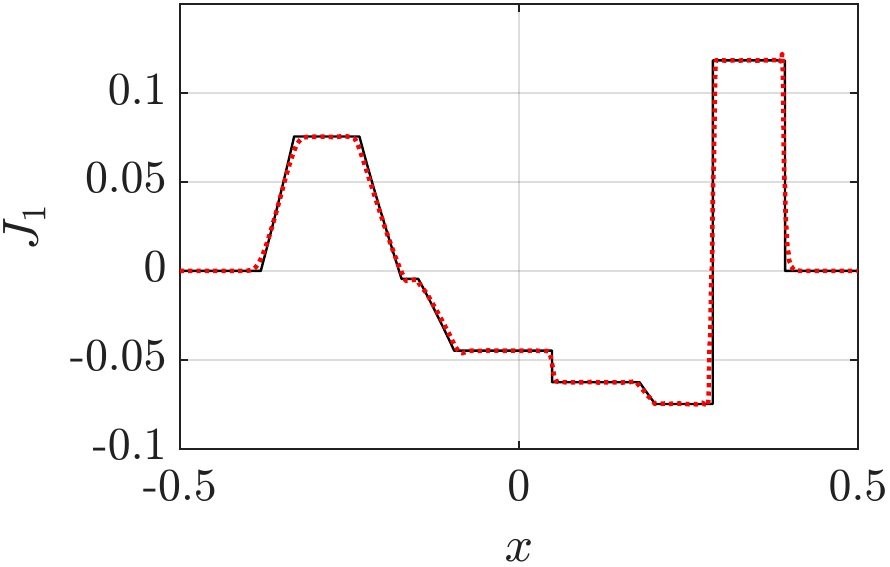}{Thermal impulse $J_1$.}
\caption{\textbf{Seven-wave homogeneous GPR Riemann problem:} GC-DG
profiles at $t_f=0.2$ compared with the iterative straight-path DLM Riemann
solution.  The red dotted curves are the GC-DG results, and the black solid
curves are the reference solution.  Panels show (a) density, (b) transverse
velocity, (c) pressure, (d)--(e) distortion components, and (f) thermal
impulse.}
\label{fig:RP3-profiles}
\end{figure}

\begin{figure}[htbp]
\centering
\plotsubfigure[0.47\textwidth]{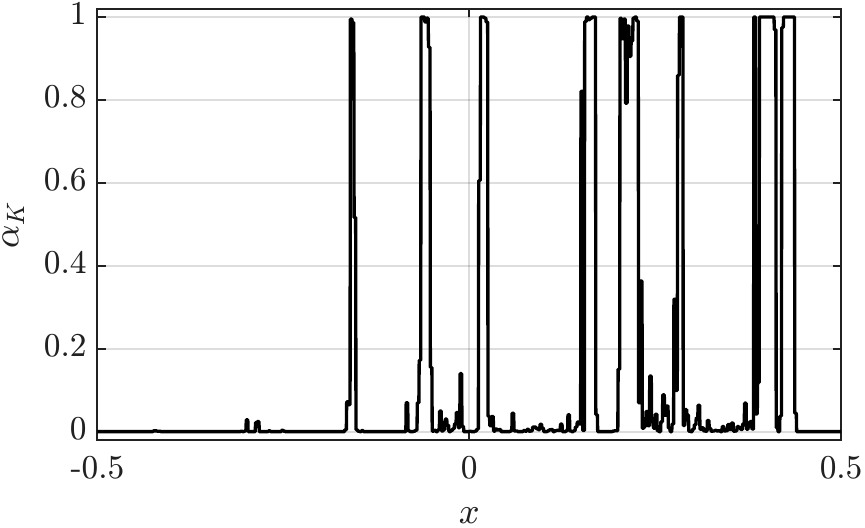}{Final elementwise blending coefficient $\alpha_K$.}\hfill
\plotsubfigure[0.47\textwidth]{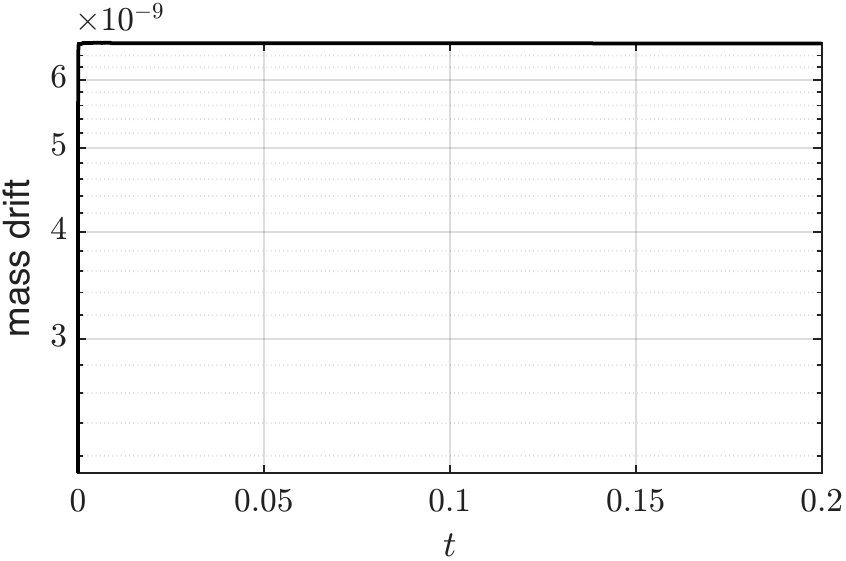}{Mass drift $\varepsilon_M^n$.}
\caption{\textbf{Seven-wave homogeneous GPR Riemann problem:} Final
elementwise blending coefficient at $t_f=0.2$ and the fully discrete
mass-drift history: (a) $\alpha_K$ and (b) boundary-compensated mass drift.}
\label{fig:RP3-diagnostics}
\end{figure}

\subsection{Circular explosion}
\label{sec:circular-explosion}

The circular explosion is a standard genuinely two-dimensional GPR
benchmark~\cite{Dumbser2016,Dumbser2026}.  A high-pressure disk expands
into a lower-pressure exterior and generates concentric waves whose
propagation tests multidimensional symmetry, shock capturing, and the
coupling between the conservative and nonconservative blocks.  Following the
EP1--EP2 comparison of Dumbser et al.~\cite{Dumbser2026}, we consider both a
fluid limit and a homogeneous elastic-solid limit.  Unlike a collection of
one-dimensional shock tubes, these calculations exercise the complete
two-dimensional volume and face operators, while radial symmetry provides a
convenient independent comparison solution.

Both tests use $\Omega=[-1,1]^2$.  Let
$r=(x^2+y^2)^{1/2}$ and $R_0=0.5$.  The common initial data are
\begin{equation}
 (\rho,p,\vct)(x,y,0)=
 \begin{cases}
  (1,1,\mathbf 0),&r\le R_0,\\
  (0.125,0.1,\mathbf 0),&r>R_0,
 \end{cases}
 \qquad
 \A(x,y,0)=
 \left(\frac{\rho(x,y,0)}{\rho_0}\right)^{1/3}I_3,
 \label{eq:circular-explosion-data}
\end{equation}
and $\Jv(x,y,0)=\mathbf0$ whenever the thermal-impulse equation is active.
Thus $\rho=\rho_0\det\A$ holds exactly at initialization.  The total energy
is recovered from these primitive data and the constitutive energy of
Section~\ref{sec:model}.  We take the ideal-gas member of
\eqref{eq:stiffened-gas-EOS}, with
\begin{equation}
 \rho_0=1,\qquad R_{\rm gas}=1,\qquad c_v=2.5,
 \qquad \gamma=1.4,\qquad p_\infty=0.
 \label{eq:circular-explosion-material}
\end{equation}
Transmissive conditions are imposed on all four boundaries, and the
discontinuous initial condition is evaluated directly at every LGL nodal
point, consistently with the initialization convention stated at the
beginning of this section.  Both runs use CFL number $0.15$ and
$C_{\rm LF}=1$.  We use $C_{\rm OE}=0.1$ for the $N=2$
fluid run and $C_{\rm OE}=0.02$ for the $N=4$ solid run.  The polynomial
degrees are intentionally different:
the fluid solution contains the familiar outgoing shock, contact, and
rarefaction, for which $N=2$ is sufficient, whereas the solid problem
contains a much richer sequence of elastic and thermal waves separated by
narrow intermediate structures.  We therefore use $N=4$ in the latter case
to resolve these fine features.  Apart from this resolution choice and the
different final times, no problem-specific modification of the limiter is
used.

\subsubsection{Fluid circular explosion}
\label{sec:circular-explosion-fluid}

For the fluid test we set $c_s=c_h=0$ and omit the thermal-impulse equation.
The resulting mechanical system has the Euler pressure law, while density is
still recovered from $\det\A$ and therefore continues to test the discrete
determinant companion relation.  This differs from the stiff-relaxation EP1 of
\cite{Dumbser2026}: here the inactive shear and thermal mechanisms are
removed rather than approximated with very small relaxation times.  Hence a
radially symmetric Euler computation provides a reference for precisely the
same limiting problem.

We use a uniform $600\times600$ Cartesian mesh, polynomial degree $N=2$, and
evolve to $t_f=0.2$.  An independent comparison solution is computed from
the radial Euler equations, including the cylindrical geometric source, using a
characteristic fifth-order WENO-Z discretization
\cite{JiangShu1996,Borges2008} on 2000 radial cells.  The comparison is made
on the positive $x$ axis at $y=0^+$.

Figure~\ref{fig:circular-explosion-fluid-profiles} compares the GC-DG density,
radial velocity, and pressure with the radial WENO-Z solution.  The profiles
reproduce the positions and amplitudes of the outgoing shock, contact wave,
and inward rarefaction; the contact and narrow inner features are smoother,
as expected because the elementwise blend activates the more dissipative
GC-FV contribution near discontinuities.  The fluid density surface in
Fig.~\ref{fig:circular-explosion-density-surfaces}(a) remains circular and
free of visible grid-aligned distortion.  The final boundary-compensated
fully discrete mass drift is
$\varepsilon_M^{n_f}=1.00\times10^{-7}$; its temporal distribution is
discussed with the diagnostics below.

\begin{figure}[htbp]
\centering
\plotsubfigure{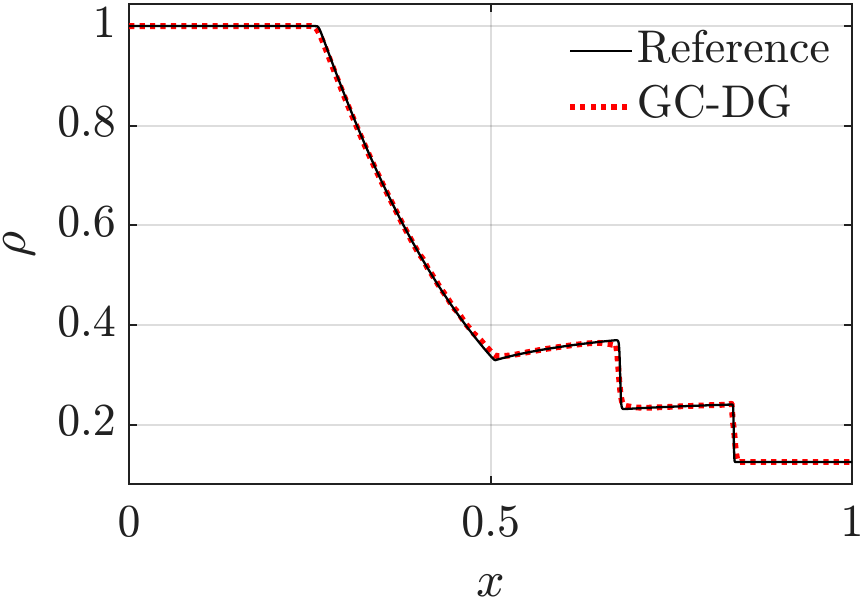}{Density $\rho$.}\hfill
\plotsubfigure{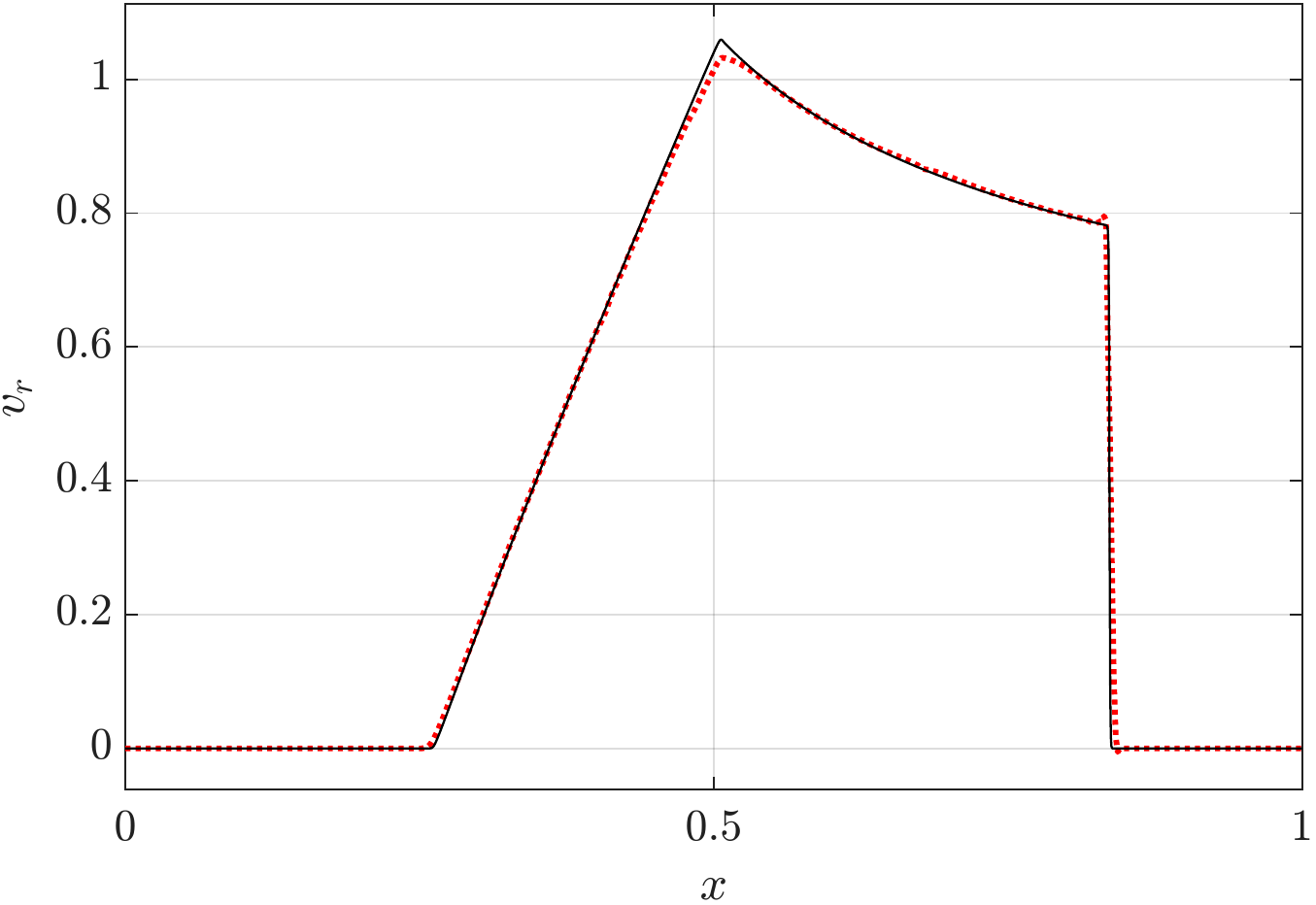}{Radial velocity $v_r$.}\hfill
\plotsubfigure{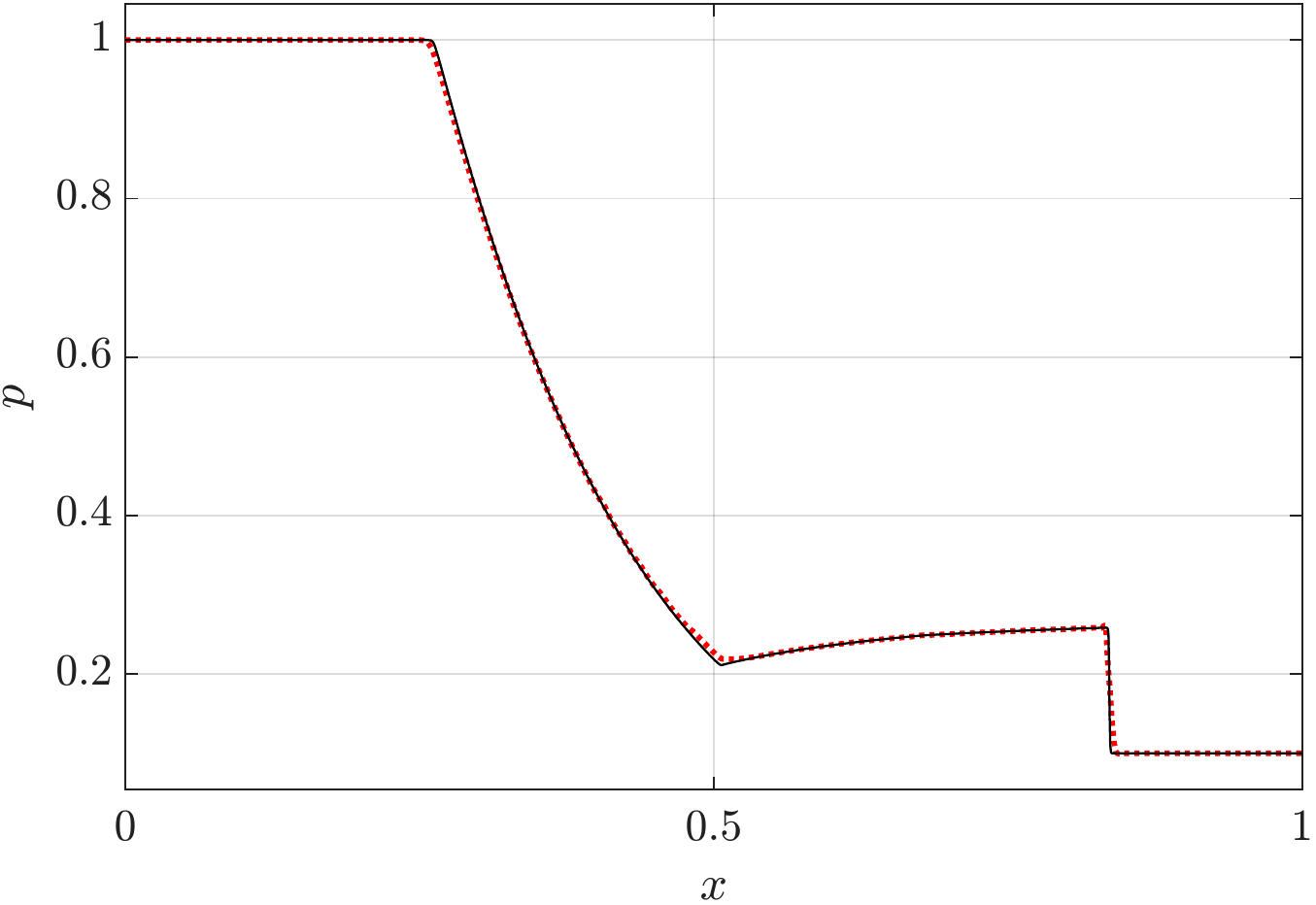}{Pressure $p$.}
\caption{\textbf{Fluid circular explosion:} GC-DG profiles at $t_f=0.2$
on $0\le x\le1$, $y=0^+$, compared with the radially symmetric Euler
WENO-Z comparison solution.  The red dotted curves are the GC-DG results,
and the black solid curves are the comparison solution.  Panels show
(a) density, (b) radial velocity, and (c) pressure.}
\label{fig:circular-explosion-fluid-profiles}
\end{figure}

\subsubsection{Solid circular explosion}
\label{sec:circular-explosion-solid}

The solid test retains the same initial states but activates both shear and
thermal waves.  We set
\begin{equation}
 c_s=c_h=1,\qquad \tau_1=\tau_2=10^{20},
 \label{eq:circular-explosion-solid-parameters}
\end{equation}
so that the computation represents the homogeneous elastic limit rather
than a stiff relaxation regime.  The thermal impulse is evolved, initially
with $\Jv=\mathbf0$, and the thermal-wave contribution to the LF estimate is
included with $\kappa_J=10$ as described in
Section~\ref{sec:GC-FV-wave-speed}.  We use a $600\times600$ mesh with
$N=4$ and evolve to $t_f=0.15$.

For comparison, an independent solution is obtained by solving the radially
symmetric original GPR--$J$ system, in which density is evolved explicitly,
with a monolithic characteristic fifth-order path-conservative WENO-Z
discretization \cite{Pares2006,JiangShu1996,Borges2008} on 2000 radial cells.
The initial data satisfy the determinant--density relation, so this provides
an independent comparison between the reduced GC-DG formulation and the
full GPR system.  In addition to $\rho$, $v_r$, and $p$, we compare $A_{11}$
and the radial thermal impulse $J_r=\Jv\cdot\mathbf e_r$.

The solid response contains a substantially richer train of elastic and
thermal waves than the fluid calculation.  Figure~\ref{fig:circular-explosion-solid-profiles}
shows that GC-DG reproduces the comparison plateaus, rarefactions, and sharp
fronts in all five quantities.  In particular, the agreement in $A_{11}$
assesses the distortion update underlying the hidden density law, while that
in $J_r$ tests the straight-path thermal-impulse discretization.  The solid
density surface in Fig.~\ref{fig:circular-explosion-density-surfaces}(b)
remains radially symmetric.  Only weak localized oscillations occur near a
few closely spaced solid-wave fronts; they neither contaminate the large
constant regions nor alter the wave locations and plateau structure.  The
final boundary-compensated fully discrete mass drift is
$\varepsilon_M^{n_f}=1.25\times10^{-7}$, which is small compared with the
$O(1)$ density variation; its temporal distribution is discussed with the
diagnostics below.

\begin{figure}[htbp]
\centering
\plotsubfigure{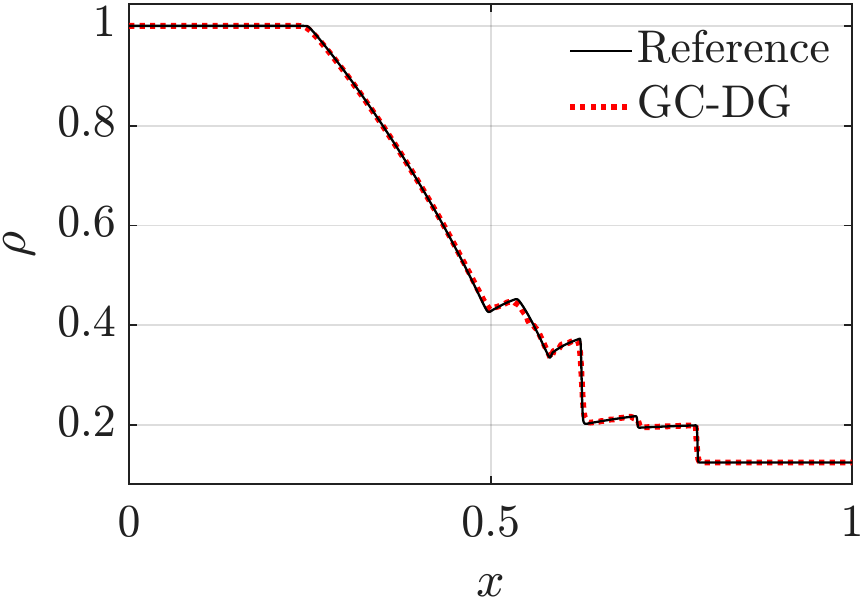}{Density $\rho$.}\hfill
\plotsubfigure{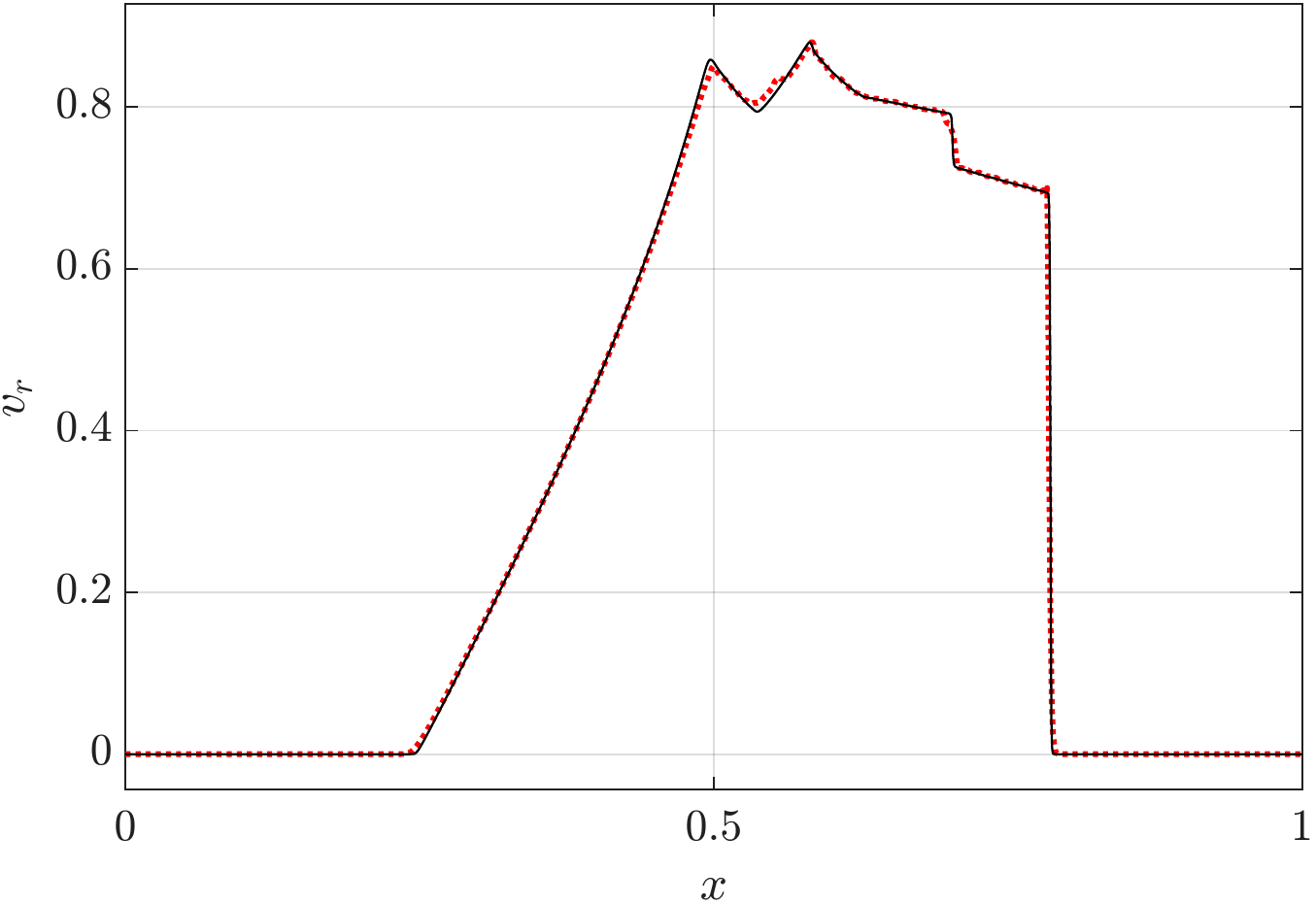}{Radial velocity $v_r$.}\hfill
\plotsubfigure{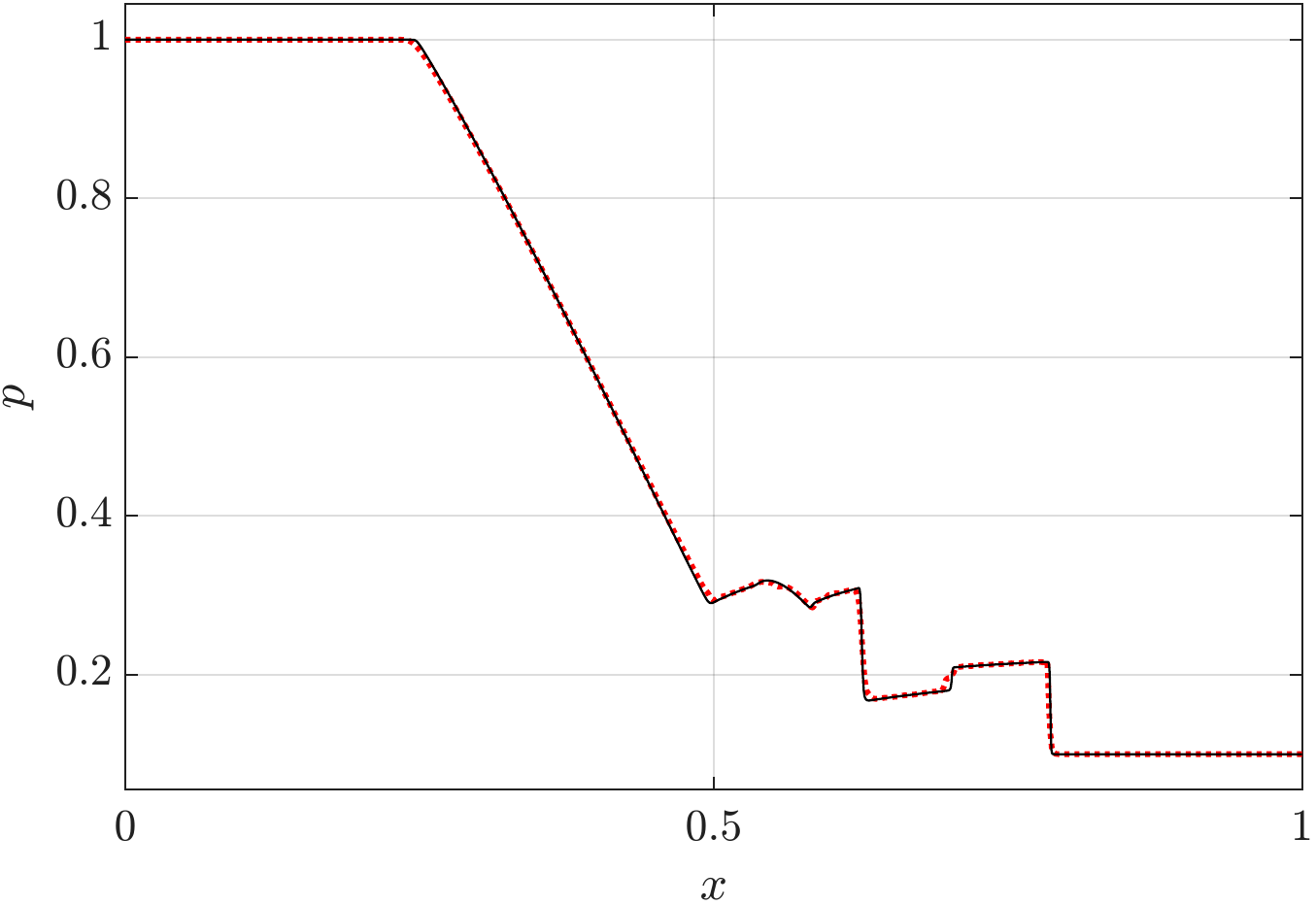}{Pressure $p$.}
\par\medskip
\plotsubfigure[0.40\textwidth]{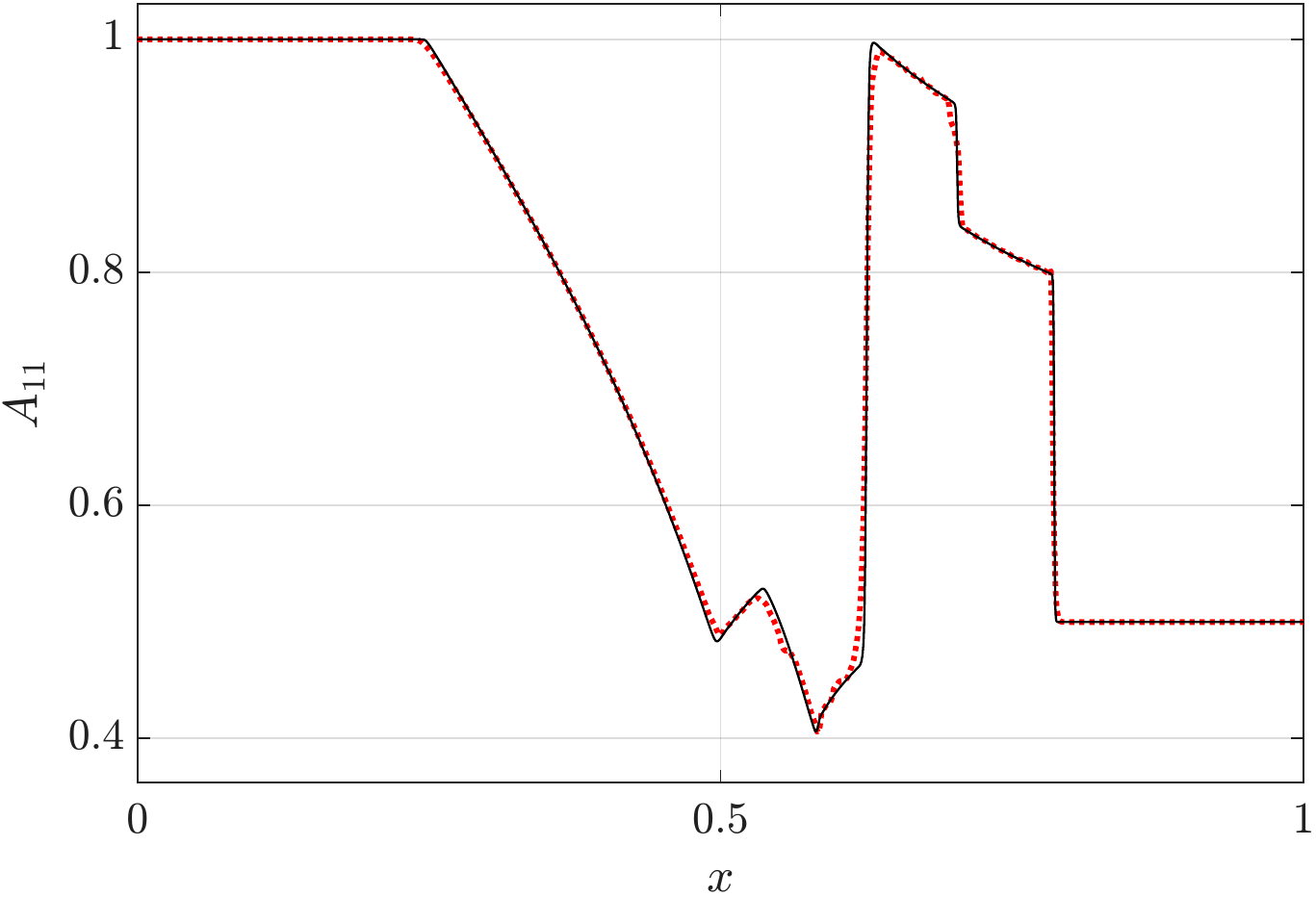}{Distortion component $A_{11}$.}\hspace{0.05\textwidth}
\plotsubfigure[0.40\textwidth]{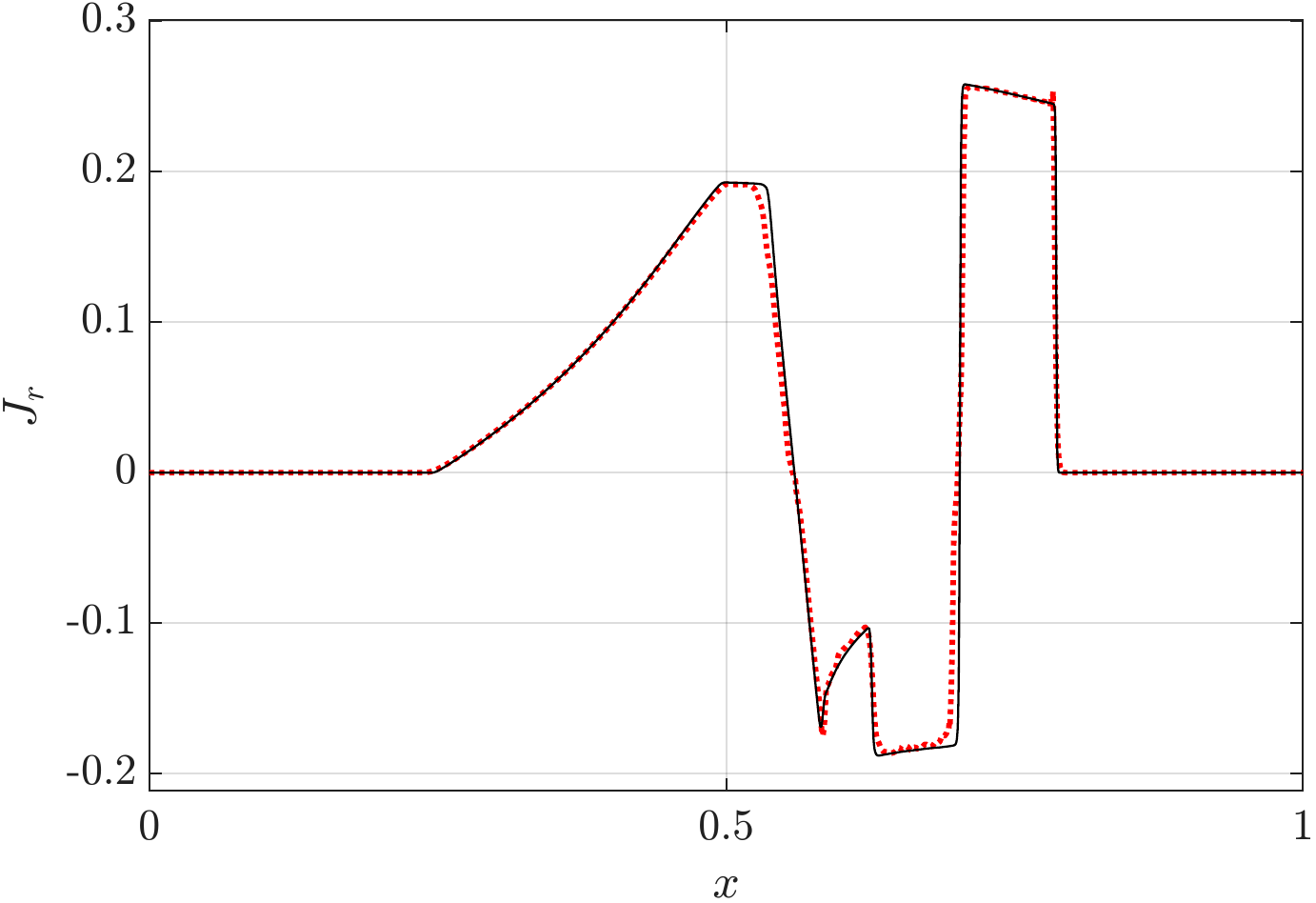}{Radial thermal impulse $J_r$.}
\caption{\textbf{Solid circular explosion:} GC-DG profiles at $t_f=0.15$
on $0\le x\le1$, $y=0^+$, compared with the radially symmetric original
GPR--$J$ WENO-Z comparison solution.  The red dotted curves are the GC-DG
results, and the black solid curves are the comparison solution.  Panels show
(a) density, (b) radial velocity, (c) pressure, (d) distortion, and
(e) radial thermal impulse.}
\label{fig:circular-explosion-solid-profiles}
\end{figure}

\begin{figure}[H]
\centering
\plotsubfigure[0.47\textwidth]{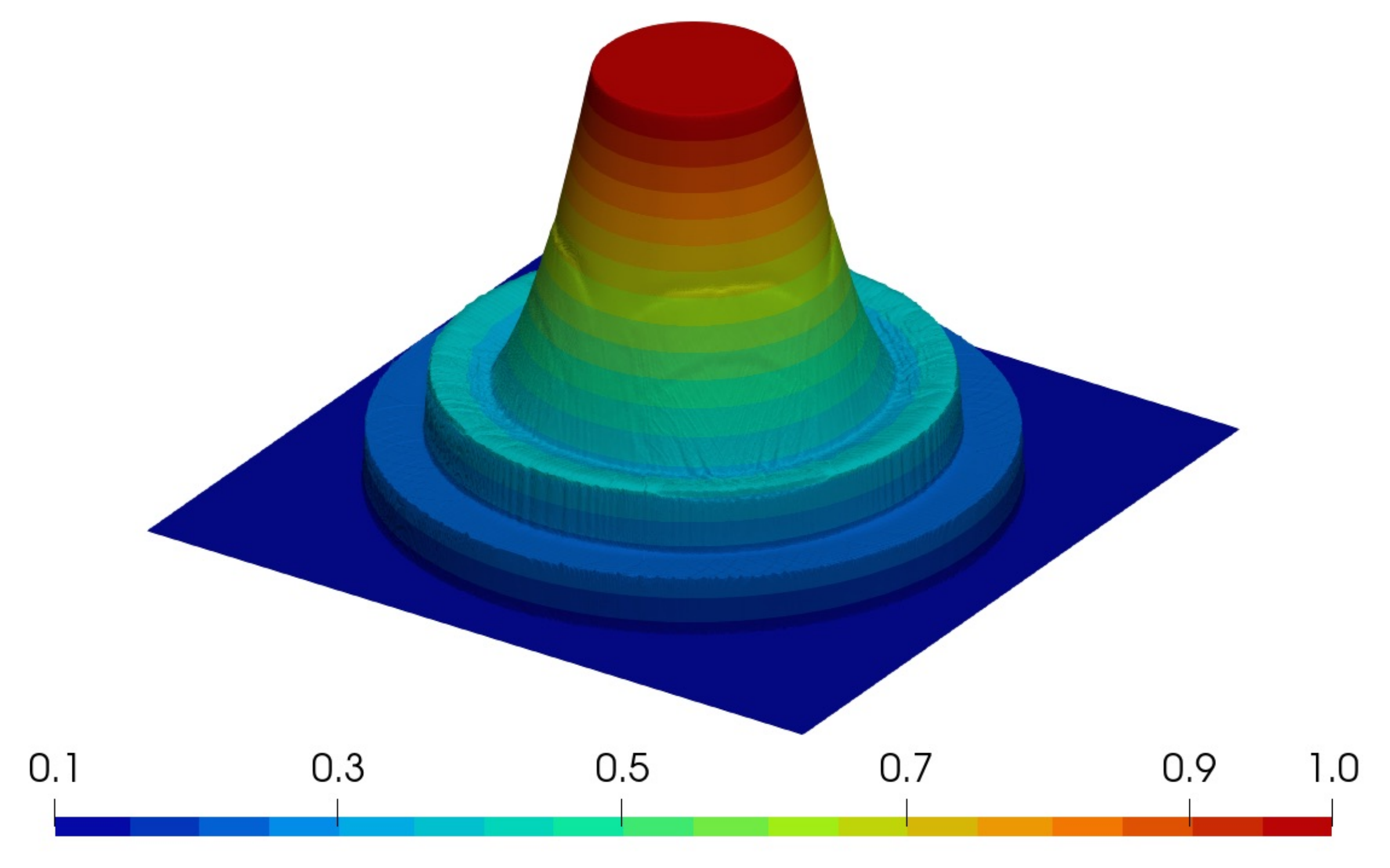}{Fluid density at $t_f=0.2$.}\hfill
\plotsubfigure[0.47\textwidth]{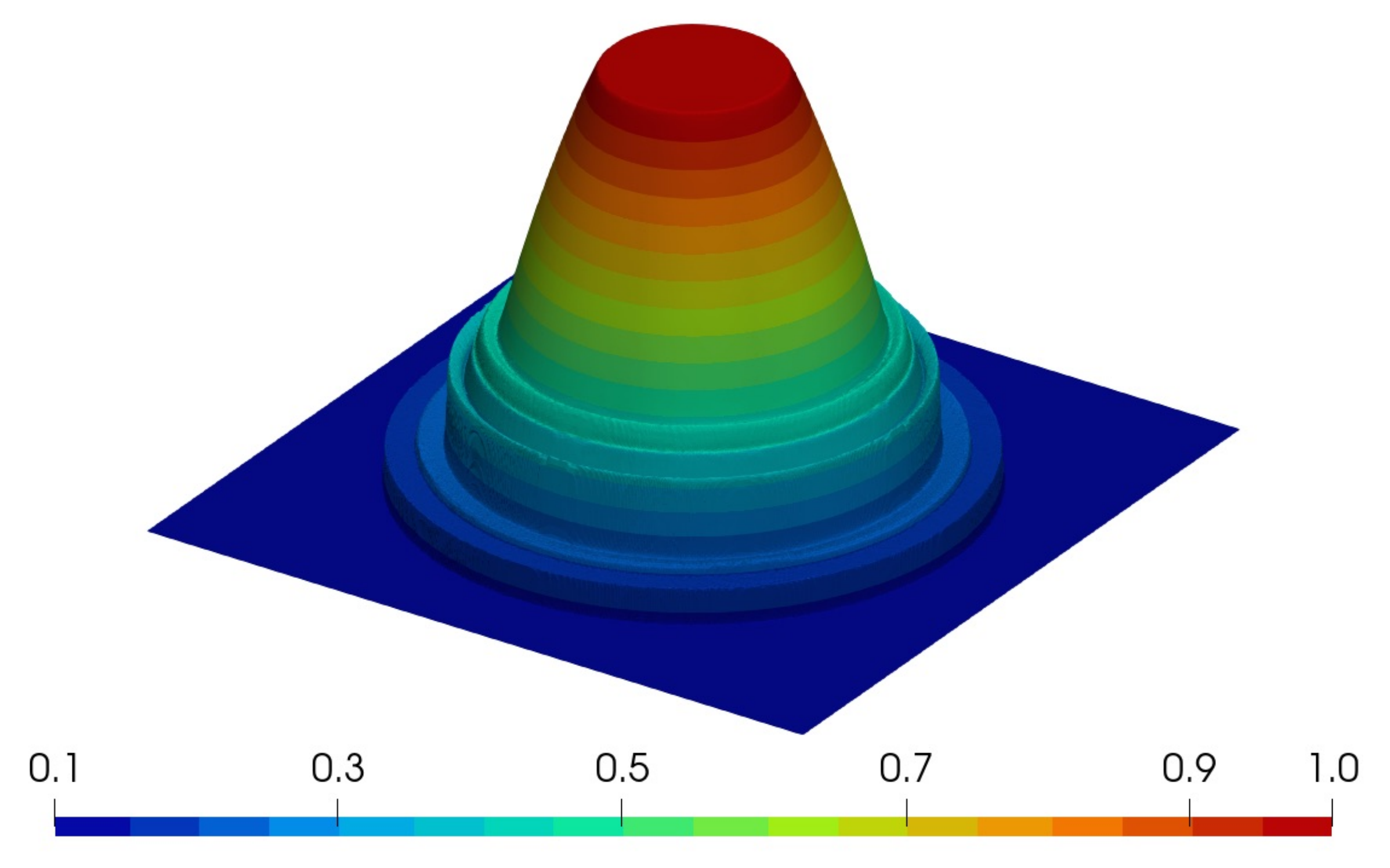}{Solid density at $t_f=0.15$.}
\caption{\textbf{Circular explosion:} Three-dimensional density surfaces
for (a) the fluid case at $t_f=0.2$ and (b) the elastic-solid case at
$t_f=0.15$.  The height and color both represent the computed density.}
\label{fig:circular-explosion-density-surfaces}
\end{figure}

The final elementwise blending coefficients $\alpha_K$ in
Fig.~\ref{fig:circular-explosion-diagnostics} respond to the circular wave
patterns in both regimes.  In the degree-$2$ fluid calculation, $\alpha_K$
extends across a substantial part of the rarefaction-to-shock fan rather
than being confined to a thin front; in the degree-$4$ solid calculation it
is more localized around the finer wave structures.  This contrast is
consistent with the lower polynomial degree: at $N=2$, fewer modal scales
are available to distinguish resolved from under-resolved content, whereas
higher-degree indicators generally localize activation more selectively
near sharp wave fronts.  The constant inner and outer states remain
predominantly governed by the high-order operator.  The
lower panels report the corresponding mass-drift histories on logarithmic
axes.  Their rapid initial rise is caused by the nodally sampled discontinuity:
the smooth-trajectory estimate of Remark~\ref{rem:fully-discrete-mass-drift}
does not apply while the first wave fronts form.  By $t=0.01t_f$, the
remaining changes to the final drift are $2.89\times10^{-9}$ in the fluid
case and $6.87\times10^{-11}$ in the solid case, so subsequent accumulation
is much smaller than the initial offset.

\begin{figure}[H]
\centering
\plotsubfigure[0.43\textwidth]{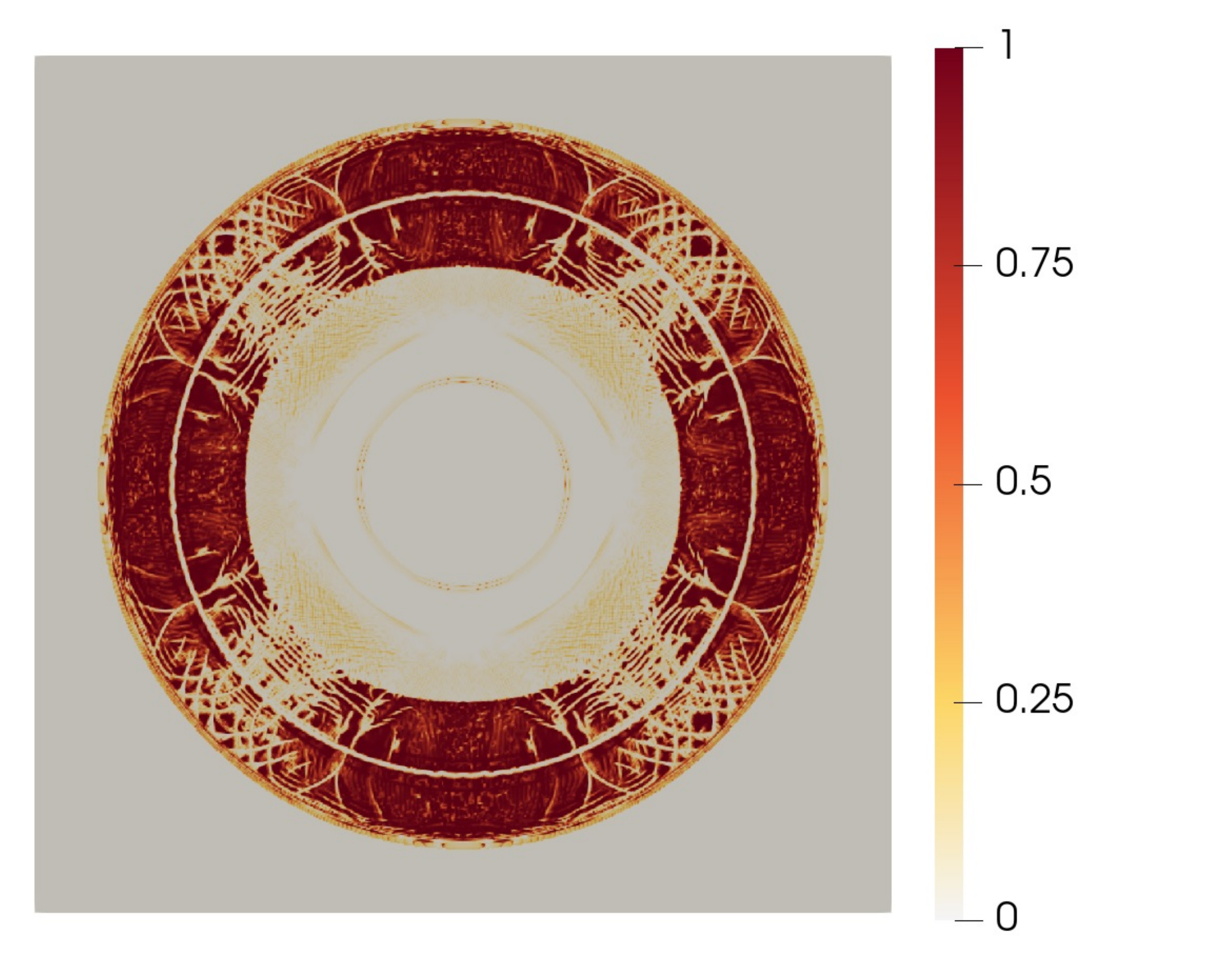}{Fluid elementwise blending coefficient $\alpha_K$.}\hfill
\plotsubfigure[0.43\textwidth]{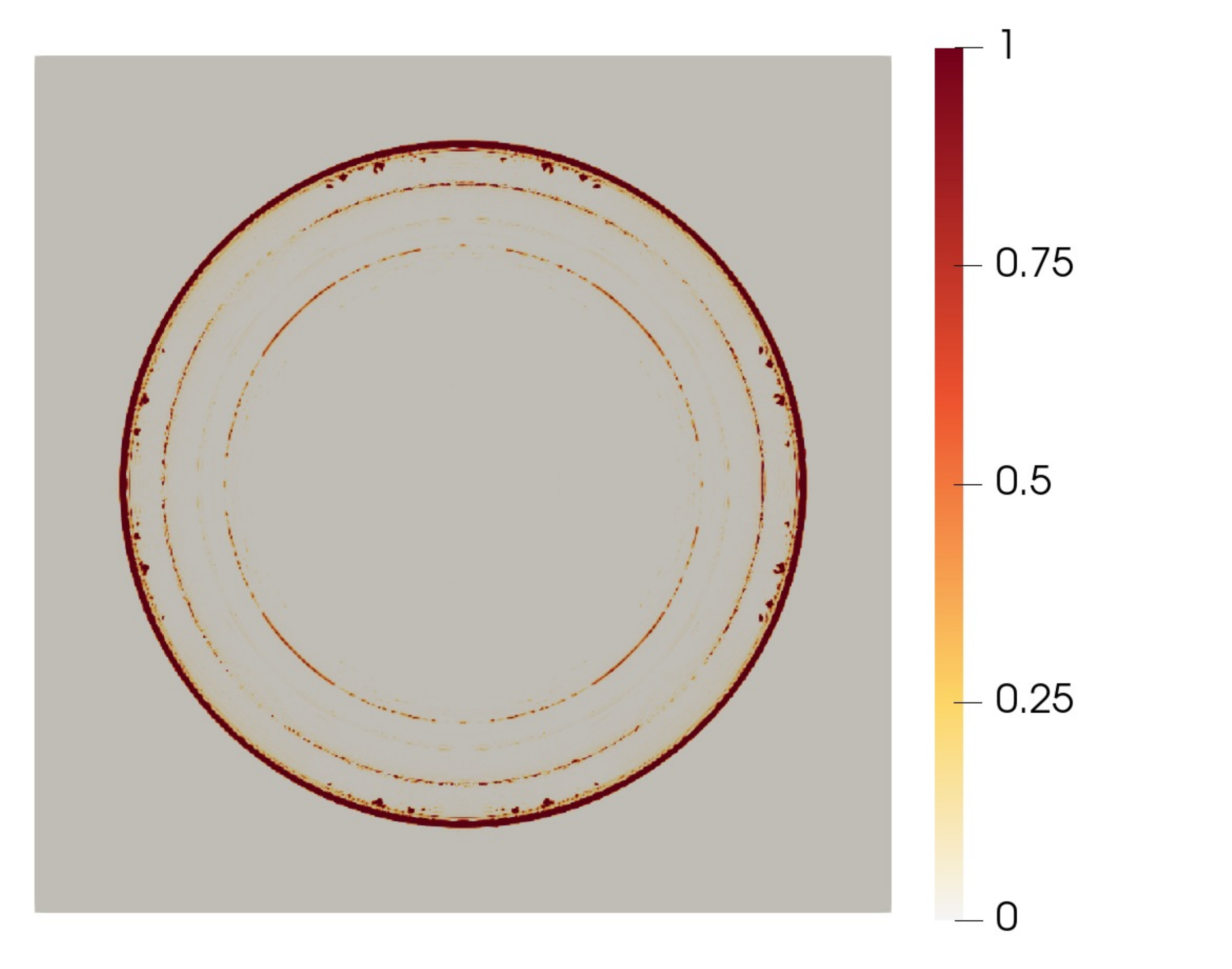}{Solid elementwise blending coefficient $\alpha_K$.}
\par\smallskip
\plotsubfigure[0.43\textwidth]{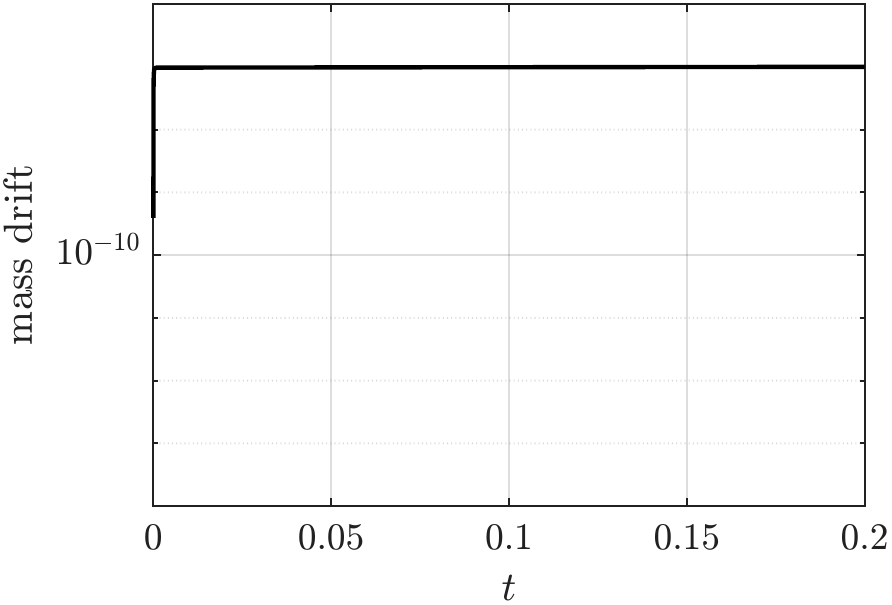}{Fluid mass drift.}\hfill
\plotsubfigure[0.43\textwidth]{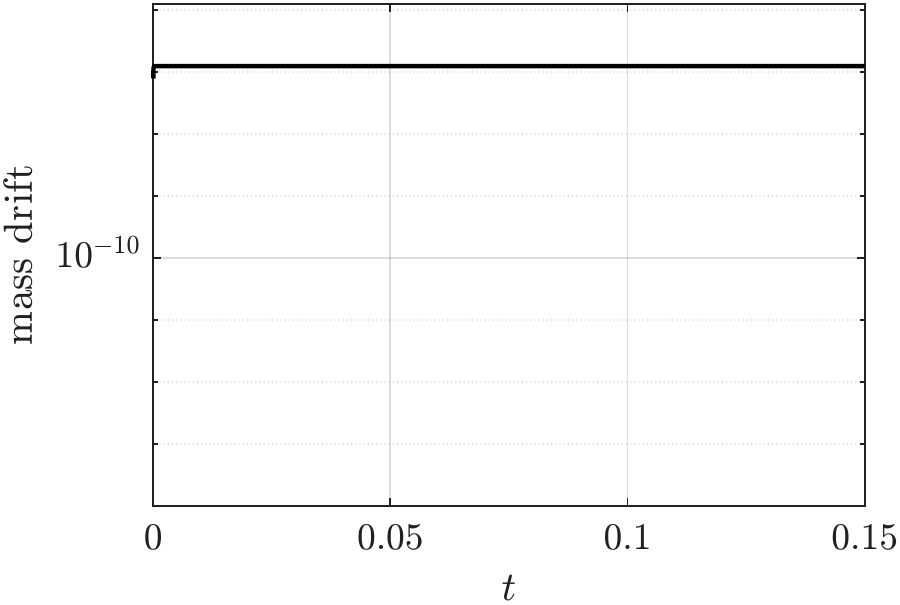}{Solid mass drift.}
\caption{\textbf{Circular explosion:} Final elementwise blending coefficients
and fully discrete mass-drift histories: (a)--(b) $\alpha_K$
and (c)--(d) logarithmic mass-drift histories for the fluid case at $t_f=0.2$ and the
elastic-solid case at $t_f=0.15$, respectively.}
\label{fig:circular-explosion-diagnostics}
\end{figure}

\subsection{Elastic solid rotor}
\label{sec:solid-rotor}

The elastic solid rotor is a genuinely two-dimensional benchmark for the
interaction of rotational motion with nonlinear elastic and thermal waves.
It was introduced for Eulerian nonlinear elasticity and has subsequently
been used to assess high-order GPR solvers and structure-preserving methods
\cite{PeshkovEtAl2019,Boscheri2024,Dumbser2026}.  The problem may be viewed as
an idealized bearing: an initially rotating circular core is instantaneously
joined to an initially stationary surrounding solid.  Shear, pressure, and
thermal waves then propagate through both regions.  The test is particularly
relevant here because the distortion tensor, the active thermal impulse, and
the global subcell limiter all participate in a fully two-dimensional flow.

We follow the periodic setup of Boscheri et al.~\cite{Boscheri2024} and
Dumbser et al.~\cite{Dumbser2026}.  On
$\Omega=[-1,1]^2$, let $r=(x^2+y^2)^{1/2}$ and $R=0.2$.  The initial data are
\begin{equation}
 (\rho,p,\vct)(x,y,0)=
 \begin{cases}
  (1,1,\omega_{\rm rot}(-y,x,0)^T),&r\le R,\\
  (1,1,\mathbf0),&r>R,
 \end{cases}
 \qquad
 \A(x,y,0)=I_3,\qquad \Jv(x,y,0)=\mathbf0,
 \label{eq:solid-rotor-data}
\end{equation}
with angular frequency $\omega_{\rm rot}=5$, so the initial tangential speed at the
rotor edge is $\omega_{\rm rot}R=1$.  Periodic conditions are imposed on all four
boundaries.  We use the ideal-gas member of
\eqref{eq:stiffened-gas-EOS}, with
\begin{equation}
 \rho_0=R_{\rm gas}=1,\qquad c_v=2.5,\qquad
 \gamma=1.4,\qquad p_\infty=0,\qquad c_s=c_h=1,
 \qquad \tau_1=\tau_2=10^{20}.
 \label{eq:solid-rotor-material}
\end{equation}
Thus both distortion and thermal relaxation are inactive on the simulated
time scale, while the elastic and thermal waves remain part of the
homogeneous GPR--$J$ dynamics.

The domain is partitioned into $400\times400$ square elements, and we use
polynomial degree $N=5$, CFL number $0.25$, $C_{\rm LF}=1$, and the
thermal-wave factor $\kappa_J=10$.  We take $C_{\rm OE}=0.02$.
The calculation is advanced to $t_f=0.3$.  For comparison, we compute an independent
second-order path-conservative MUSCL--Hancock solution of the complete
GPR--$J$ system on an $800\times800$ mesh
\cite{vanLeer1979,Toro2009,Pares2006}.  Both solutions are sampled on the
cut $y=0^+$.

Figure~\ref{fig:solid-rotor-profiles} compares density, transverse velocity,
distortion, and thermal impulse.  GC-DG agrees with the wave locations and
nested intermediate structures in the comparison solution.  Agreement in $A_{11}$ is
especially relevant to the determinant companion construction, whereas the
$J_1$ comparison tests the independently path-conservative thermal-impulse
block.  The corresponding two-dimensional fields in
Fig.~\ref{fig:solid-rotor-contours} retain the expected rotational structure
while resolving the spiral and annular wave pattern.  The final elementwise
blending coefficient $\alpha_K$ is concentrated along these wave fronts; large portions of
the smooth plateaus remain governed predominantly by the high-order
operator.  Figure~\ref{fig:solid-rotor-diagnostics} displays this map together
with the mass-drift history.  Because the boundary is periodic, the final normalized fully
discrete mass drift is $\varepsilon_M^{n_f}=1.97\times10^{-10}$, consistent
with the periodic semidiscrete companion balance and the fully discrete
discussion in Remark~\ref{rem:fully-discrete-mass-drift}.
The drift rises most rapidly during the initial rotor adjustment; after
$t=0.01$ it changes by only $8.16\times10^{-11}$, so its final level is set
mainly during that adjustment.

\begin{figure}[!t]
\centering
\plotsubfigure[0.47\textwidth]{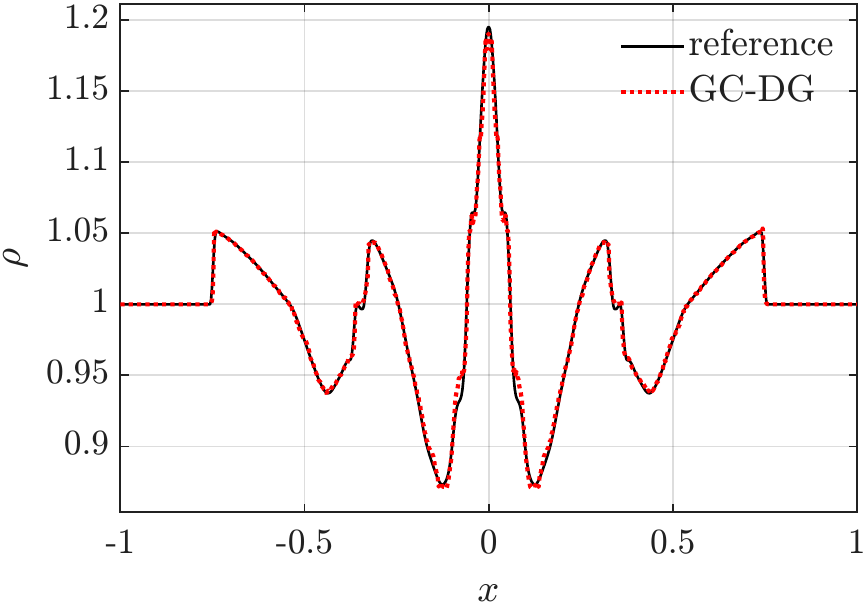}{Density $\rho$.}\hfill
\plotsubfigure[0.47\textwidth]{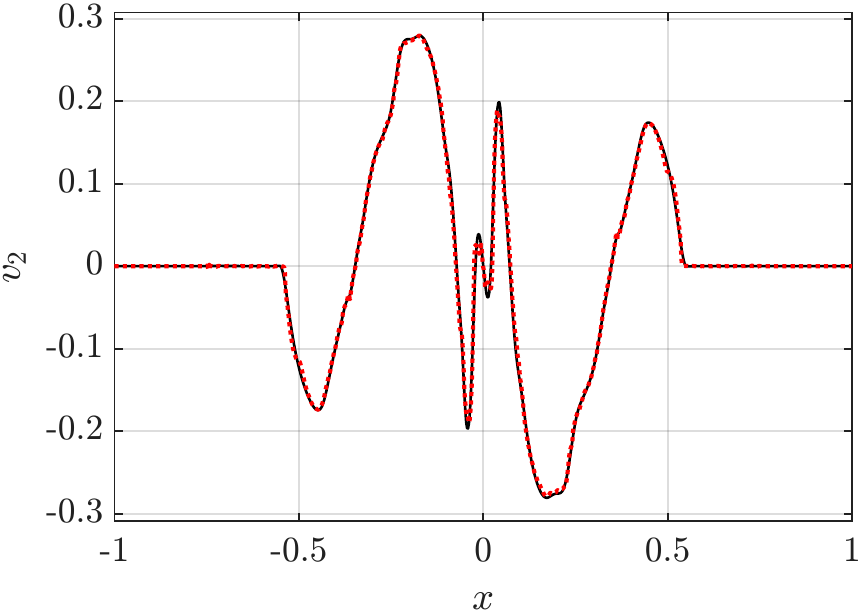}{Transverse velocity $v_2$.}
\par\medskip
\plotsubfigure[0.47\textwidth]{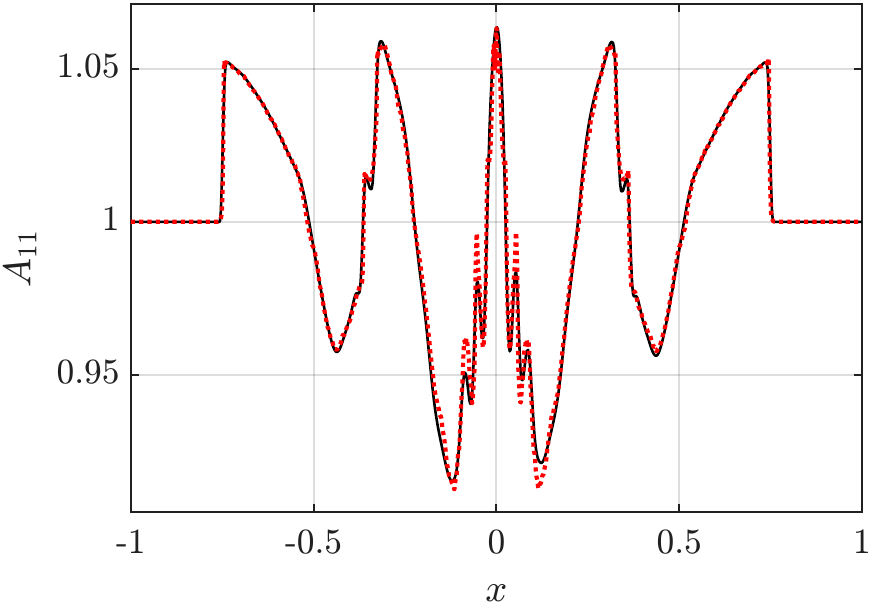}{Distortion component $A_{11}$.}\hfill
\plotsubfigure[0.47\textwidth]{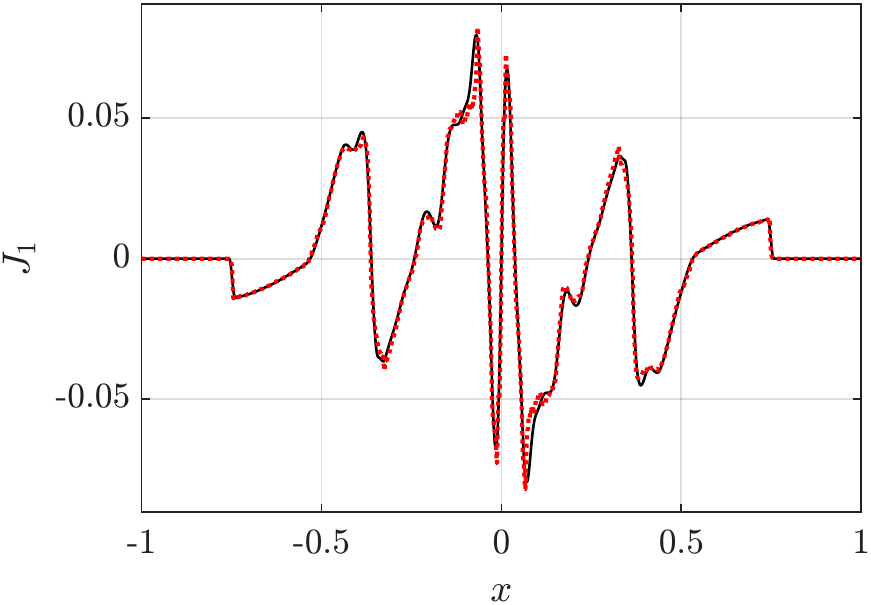}{Thermal impulse $J_1$.}
\caption{\textbf{Elastic solid rotor:} GC-DG profiles at $t_f=0.3$ on the
cut $y=0^+$, compared with the independent path-conservative
MUSCL--Hancock comparison solution.  The red dotted curves are the GC-DG
results, and the black solid curves are the comparison solution.  Panels show
(a) density, (b) transverse velocity, (c) distortion, and (d) thermal
impulse.}
\label{fig:solid-rotor-profiles}
\end{figure}

\begin{figure}[htbp]
\centering
\plotsubfigure[0.47\textwidth]{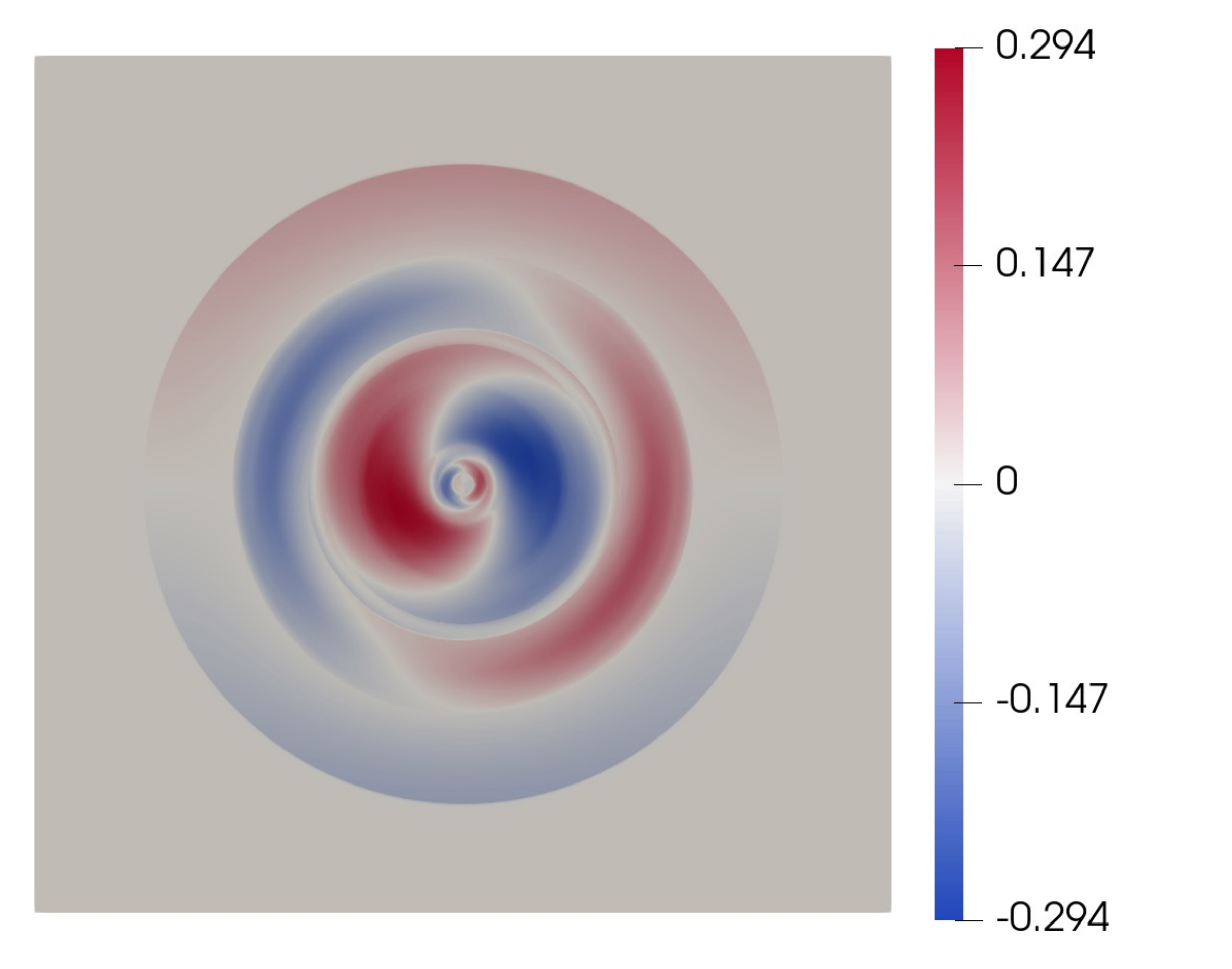}{Transverse velocity $v_2$.}\hfill
\plotsubfigure[0.47\textwidth]{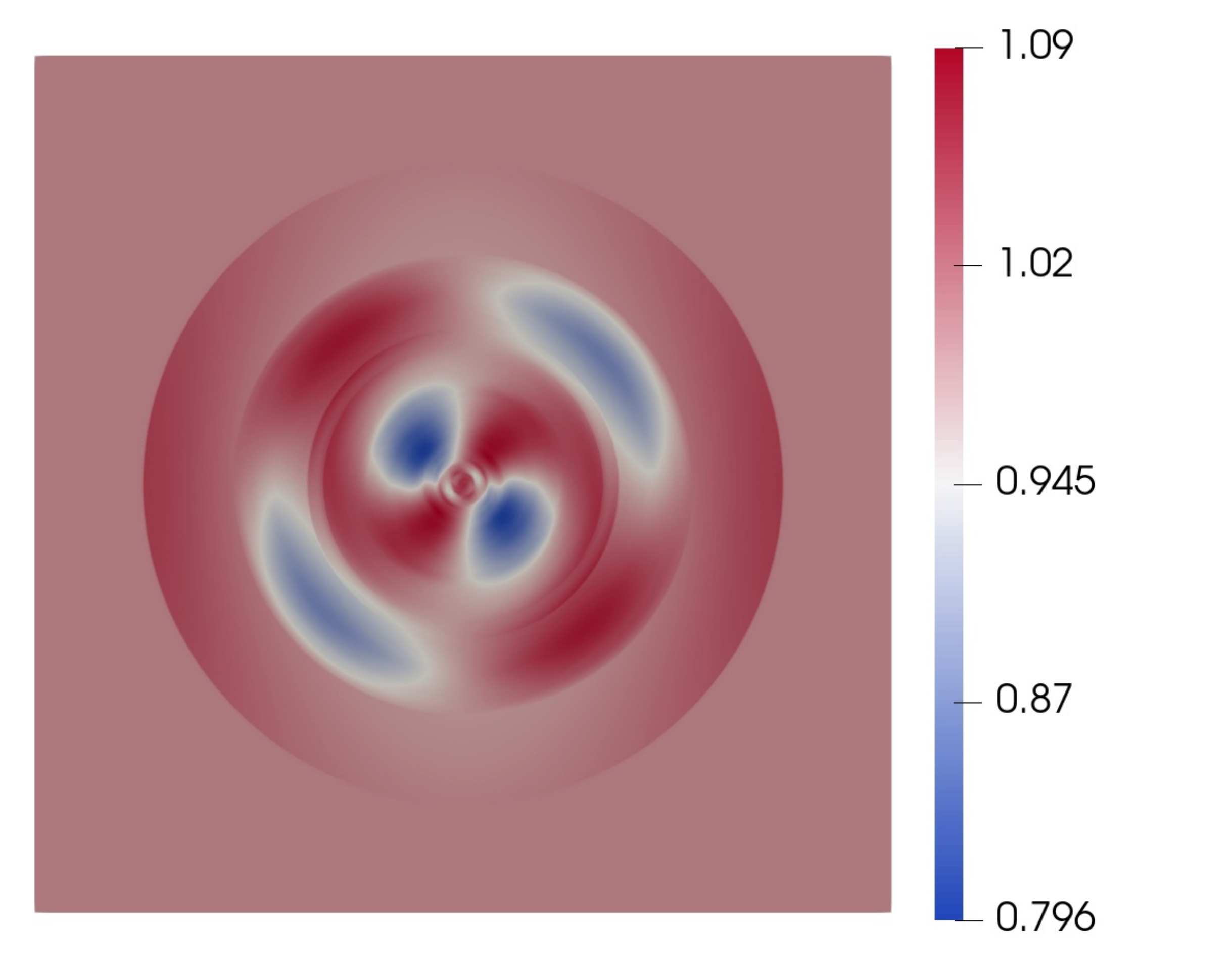}{Distortion component $A_{11}$.}
\par\medskip
\plotsubfigure[0.47\textwidth]{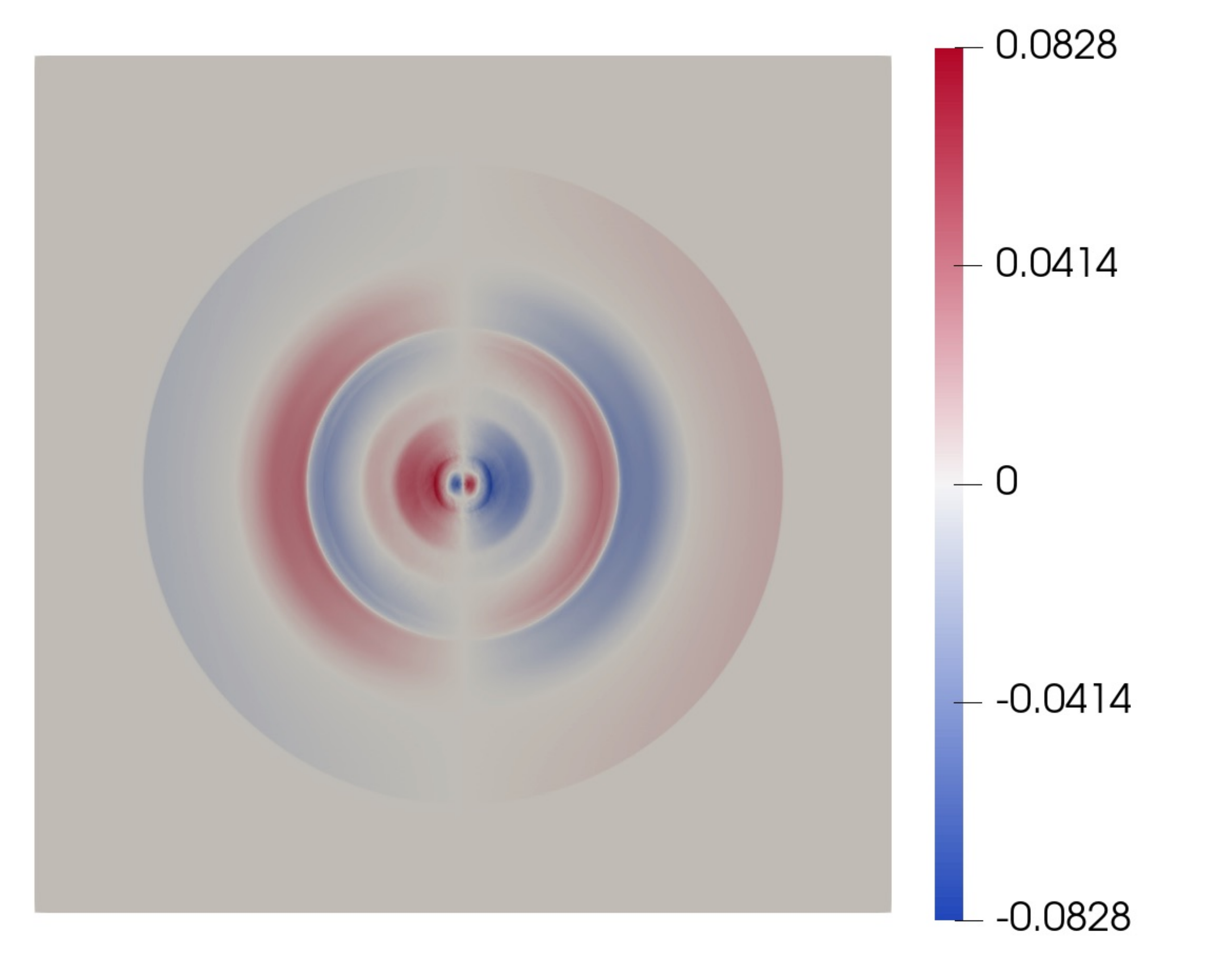}{Thermal impulse $J_1$.}
\caption{\textbf{Elastic solid rotor:} Final-time two-dimensional GC-DG
fields at $t_f=0.3$: (a) transverse velocity, (b) distortion, and
(c) thermal impulse.}
\label{fig:solid-rotor-contours}
\end{figure}

\begin{figure}[htbp]
\centering
\plotsubfigure[0.47\textwidth]{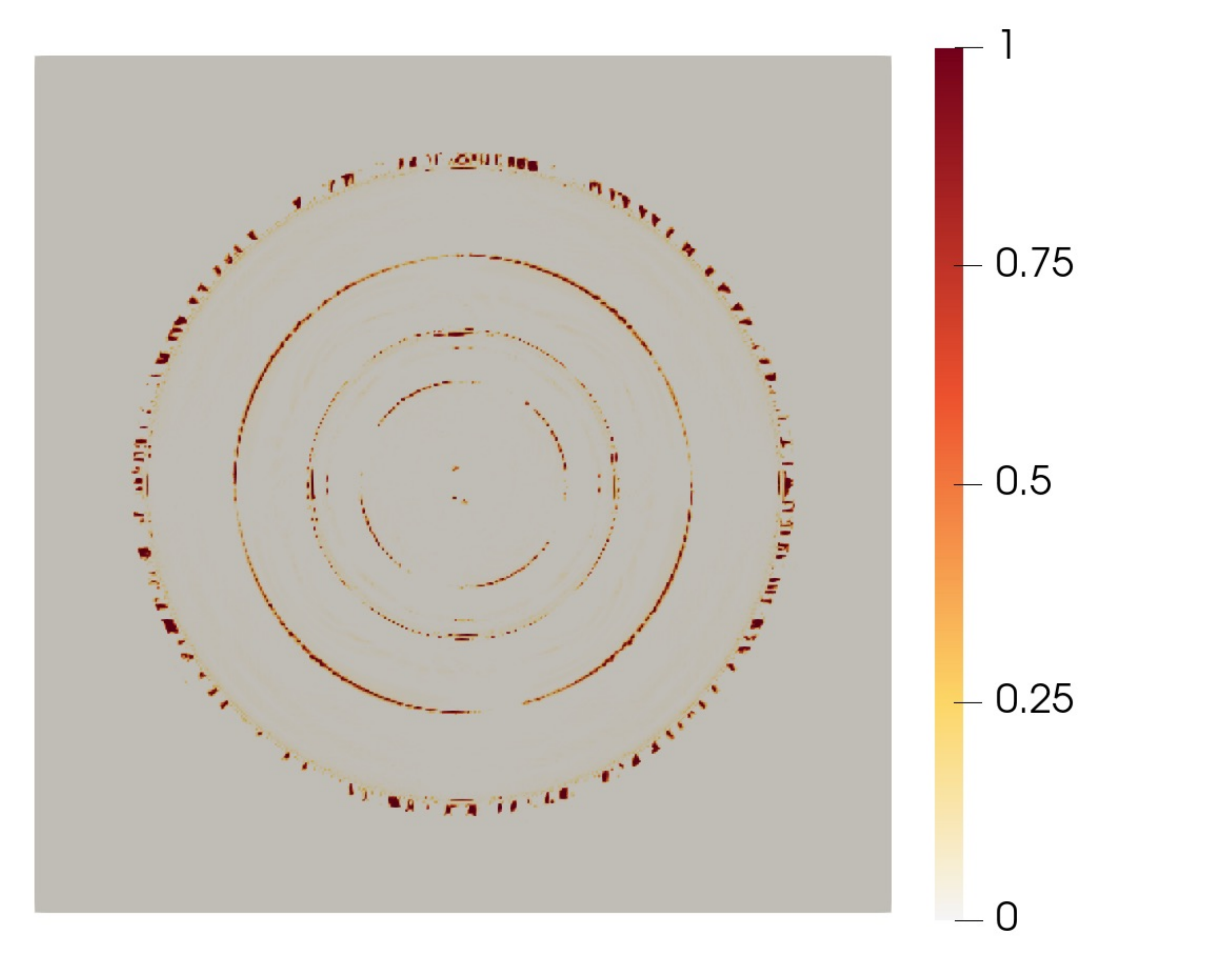}{Final elementwise blending coefficient $\alpha_K$.}\hfill
\plotsubfigure[0.47\textwidth]{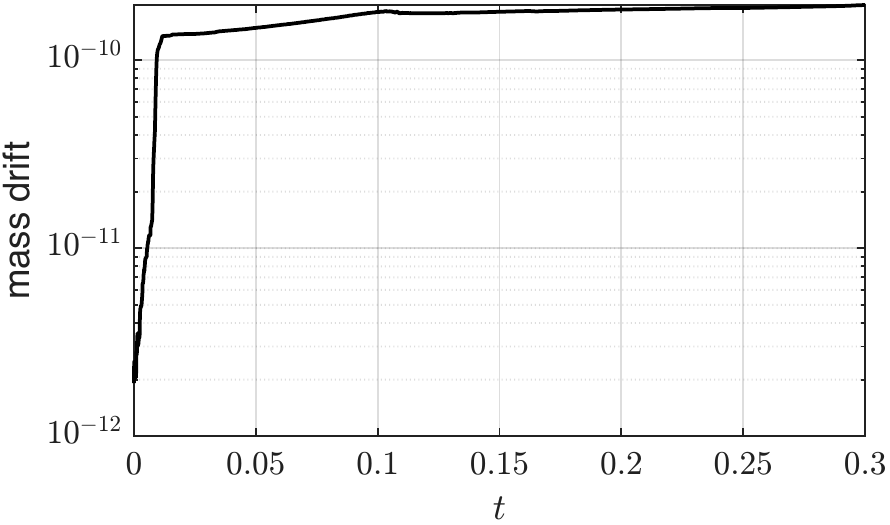}{Mass drift $\varepsilon_M^n$.}
\caption{\textbf{Elastic solid rotor:} Final elementwise blending coefficient
and fully discrete mass-drift history at $t_f=0.3$: (a) $\alpha_K$ and
(b) logarithmic mass drift.}
\label{fig:solid-rotor-diagnostics}
\end{figure}

\subsection{Cylindrical elastoplastic shock}
\label{sec:cylindrical-shock}

The cylindrical elastoplastic shock is a standard multidimensional copper
benchmark introduced by Barton et al.~\cite{Barton2010} and later computed
with the GPR model by Jackson and Nikiforakis
\cite{JacksonNikiforakis2019}.  A circular initial-state discontinuity separates an
inner region at ambient conditions from a hot, highly compressed exterior.
The initial radial imbalance produces coupled elastic and plastic waves.  In
addition to testing radial symmetry and shock resolution, this problem
places the geometric-compatible discretization in a general nonlinear
solid EOS with an active, stress-dependent distortion source.

We use the full square $\Omega=[-0.1,0.1]^2\,\mathrm m$ with transmissive
boundaries.  With $r=(x^2+y^2)^{1/2}$ and $R_0=0.02\,\mathrm m$, the material
is initially at rest and
\begin{equation}
 (p,T)(x,y,0)=
 \begin{cases}
  (0,300\,\mathrm K),&r\le R_0,\\
  (10^{10}\,\mathrm{Pa},600\,\mathrm K),&r>R_0.
 \end{cases}
 \label{eq:cylindrical-shock-data}
\end{equation}
For each state, the isotropic distortion is
$\A=\nu^{1/3}I_3$, where $\nu=\rho/\rho_0$ is determined from the
Barton--Romenski EOS at the prescribed $(p,T)$.  This gives
$\rho_{\rm in}=8900\,\mathrm{kg\,m^{-3}}$ and
$\rho_{\rm out}=9367.85\,\mathrm{kg\,m^{-3}}$, and enforces
$\rho=\rho_0\det\A$ initially.  The thermal-impulse equation is not evolved
in this test.

The Barton--Romenski energy
\eqref{eq:Barton-volumetric-energy}--\eqref{eq:Barton-shear-energy} uses
the copper parameters
\begin{equation}
 \begin{aligned}
 \rho_0&=8900\,\mathrm{kg\,m^{-3}},&
 c_{L,0}&=4651\,\mathrm{m\,s^{-1}},&
 b_0&=2141\,\mathrm{m\,s^{-1}},\\
 c_v&=390\,\mathrm{J\,kg^{-1}K^{-1}},&
 T_0&=300\,\mathrm K,&
 (\alpha_B,\beta_B,\gamma_B)&=(1,3,2).
 \end{aligned}
 \label{eq:cylindrical-shock-material}
\end{equation}
Plastic relaxation is modeled by
\begin{equation}
 \tau_1=\tau_0
 \left(\frac{\sigma_0}{\|\dev\bm\sigma\|_F}\right)^{n_p},\qquad
 \tau_0=3.13\times10^{-3}\,\mathrm s,\quad
 \sigma_0=4.5\times10^7\,\mathrm{Pa},\quad n_p=6.
 \label{eq:cylindrical-shock-relaxation}
\end{equation}
The EOS and yield-stress scale are those of
\cite{JacksonNikiforakis2019}.  The more severe relaxation law used there,
$(\tau_0,n_p)=(0.92\,\mathrm s,10.1)$, is softened here for an explicit
SSP--RK3 computation; the value in
\eqref{eq:cylindrical-shock-relaxation} is chosen so that the two laws agree
at $\|\dev\bm\sigma\|_F=4\sigma_0$.  Thus plastic relaxation remains active,
but the source stiffness is reduced without changing the underlying copper
energy or yield-stress scale.

We compute on a uniform $600\times600$ mesh with polynomial degree $N=4$
until $t_f=10^{-5}\,\mathrm s$.  SSP--RK3 uses hyperbolic CFL number $0.30$,
$C_{\rm LF}=1.5$, and the additional source restriction
\begin{equation}
 \Delta t\le
 \frac{C_{\rm src}}
 {\displaystyle\max_{K,i,j} 3(\det\A_{K,ij})^{5/3}(n_p+1)/\tau_{1,K,ij}},
 \qquad C_{\rm src}=0.25.
 \label{eq:cylindrical-shock-source-step}
\end{equation}
Here $\A_{K,ij}$ and $\tau_{1,K,ij}$ are the distortion and relaxation time
evaluated at LGL node $(i,j)$ of element $K$.  We take $C_{\rm OE}=0.02$.
For comparison, an independent radially
symmetric solution is computed on
$6000$ radial cells using the same constitutive law and relaxation source,
a characteristic fifth-order path-conservative WENO-Z discretization
\cite{Pares2006,JiangShu1996,Borges2008}, and SSP--RK3.

Figure~\ref{fig:cylindrical-shock-profiles} shows the radial density,
compressive deviatoric radial stress $-\sigma_{rr}$, and radial velocity.
Here $\bm\sigma$ is the flux-stress tensor in
\eqref{eq:full-GPR-stresses}, so $-\sigma_{rr}$ is the corresponding
deviatoric Cauchy radial stress under our sign convention.  The GC-DG profiles
closely follow the radial comparison solution through the principal elastic and
plastic fronts and reproduce the narrow stress structure generated near the
initial interface.  The density and velocity-magnitude maps in
Fig.~\ref{fig:cylindrical-shock-diagnostics} remain circular and free of
visible grid imprinting, despite the Cartesian mesh and strong thermodynamic
state jump.  The same figure shows that the final elementwise blending
coefficient $\alpha_K$ is confined to the physical wave regions rather than the
constant far field.  The final boundary-compensated fully discrete mass drift
is $\varepsilon_M^{n_f}=6.02\times10^{-11}$.  After
$t=10^{-7}\,\mathrm s$ it changes by only $1.77\times10^{-13}$, so its final
level is set almost entirely during the initial discontinuous transient.
These results demonstrate that the geometric
projection and global subcell stabilization remain robust for a general
solid energy and an active nonlinear relaxation mechanism; neither relies
on the ideal/stiffened-gas specialization used in several preceding tests.

\begin{figure}[htbp]
\centering
\plotsubfigure{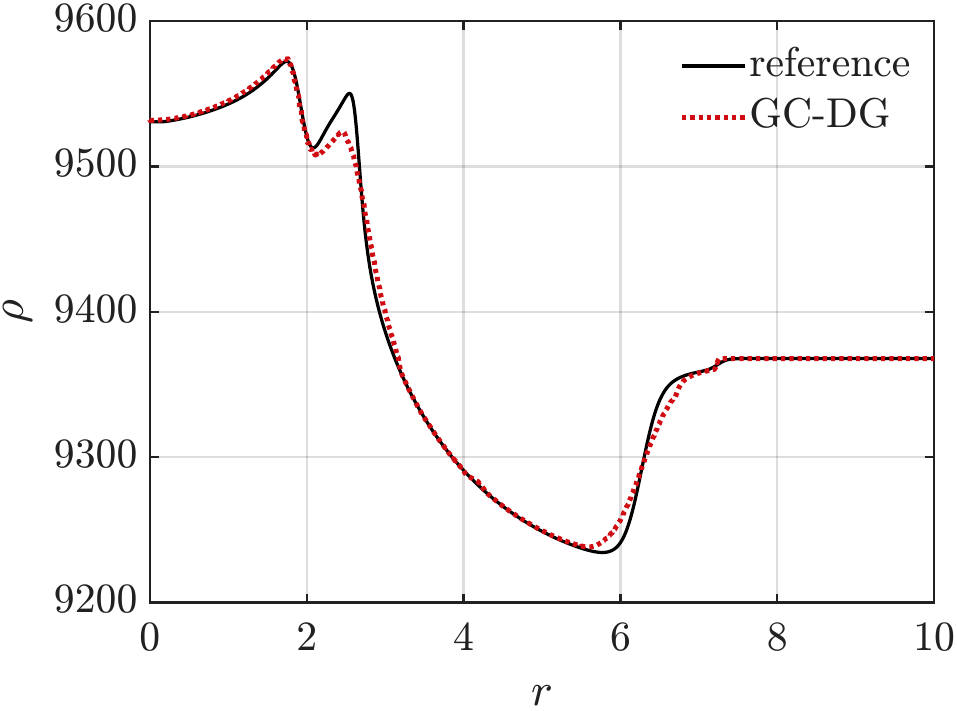}{Density $\rho$.}\hfill
\plotsubfigure{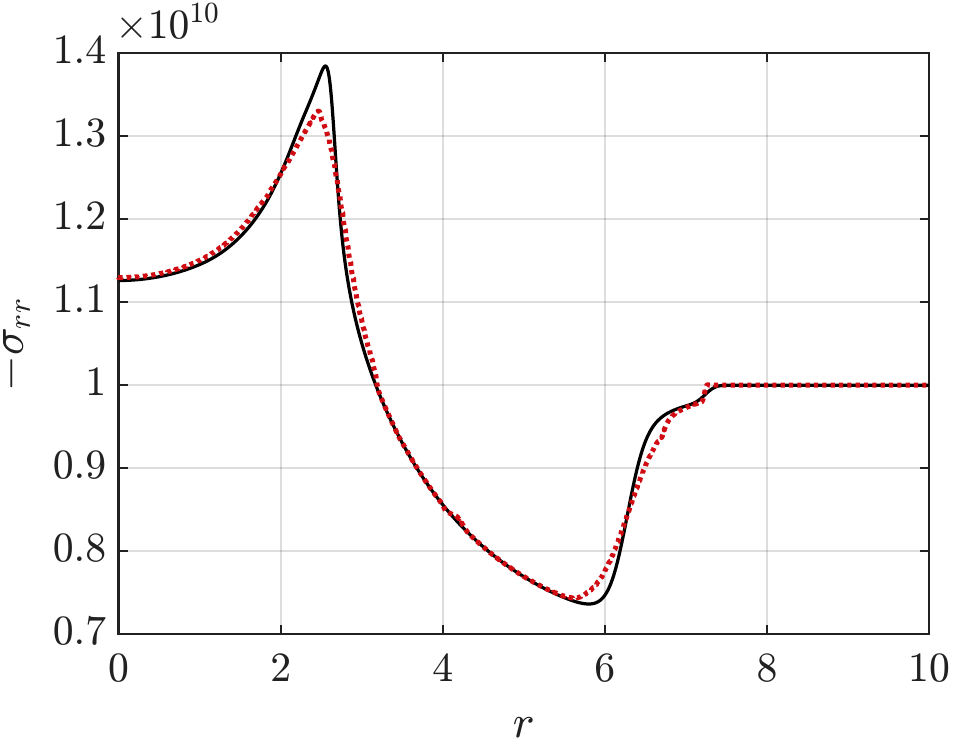}{Compressive deviatoric radial stress $-\sigma_{rr}$.}\hfill
\plotsubfigure{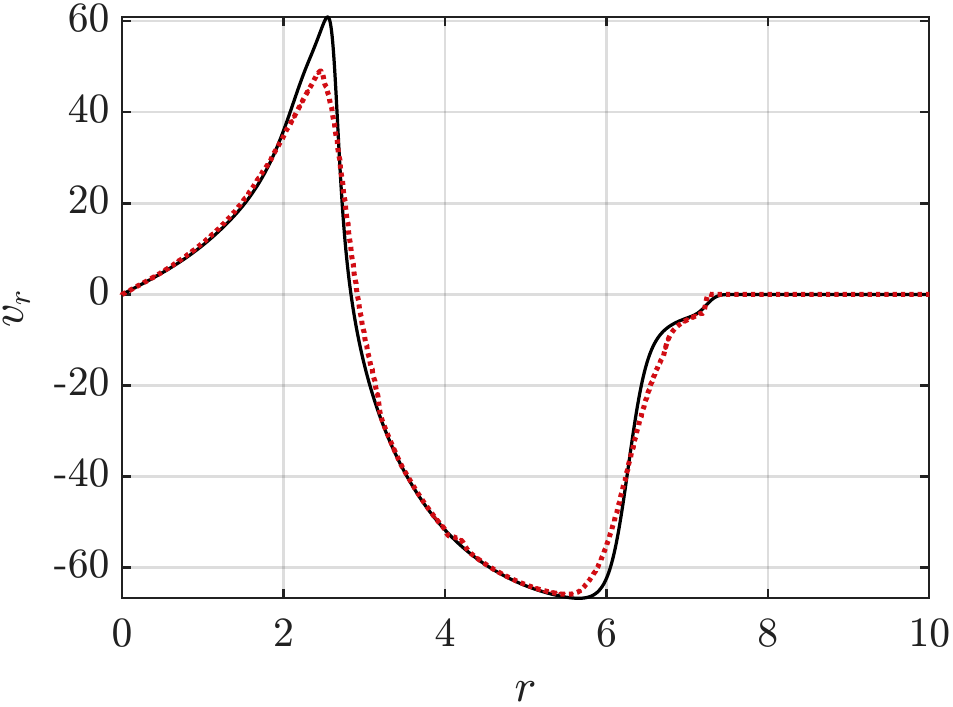}{Radial velocity $v_r$.}
\caption{\textbf{Cylindrical elastoplastic shock:} GC-DG profiles at
$t_f=10^{-5}\,\mathrm s$ compared with the characteristic radial
path-conservative WENO-Z comparison solution.  The red dotted curves are the
GC-DG results, and the black solid curves are the comparison solution.  Panels
show (a) density, (b) compressive deviatoric radial stress, and (c) radial
velocity.}
\label{fig:cylindrical-shock-profiles}
\end{figure}

\begin{figure}[H]
\centering
\plotsubfigure[0.47\textwidth]{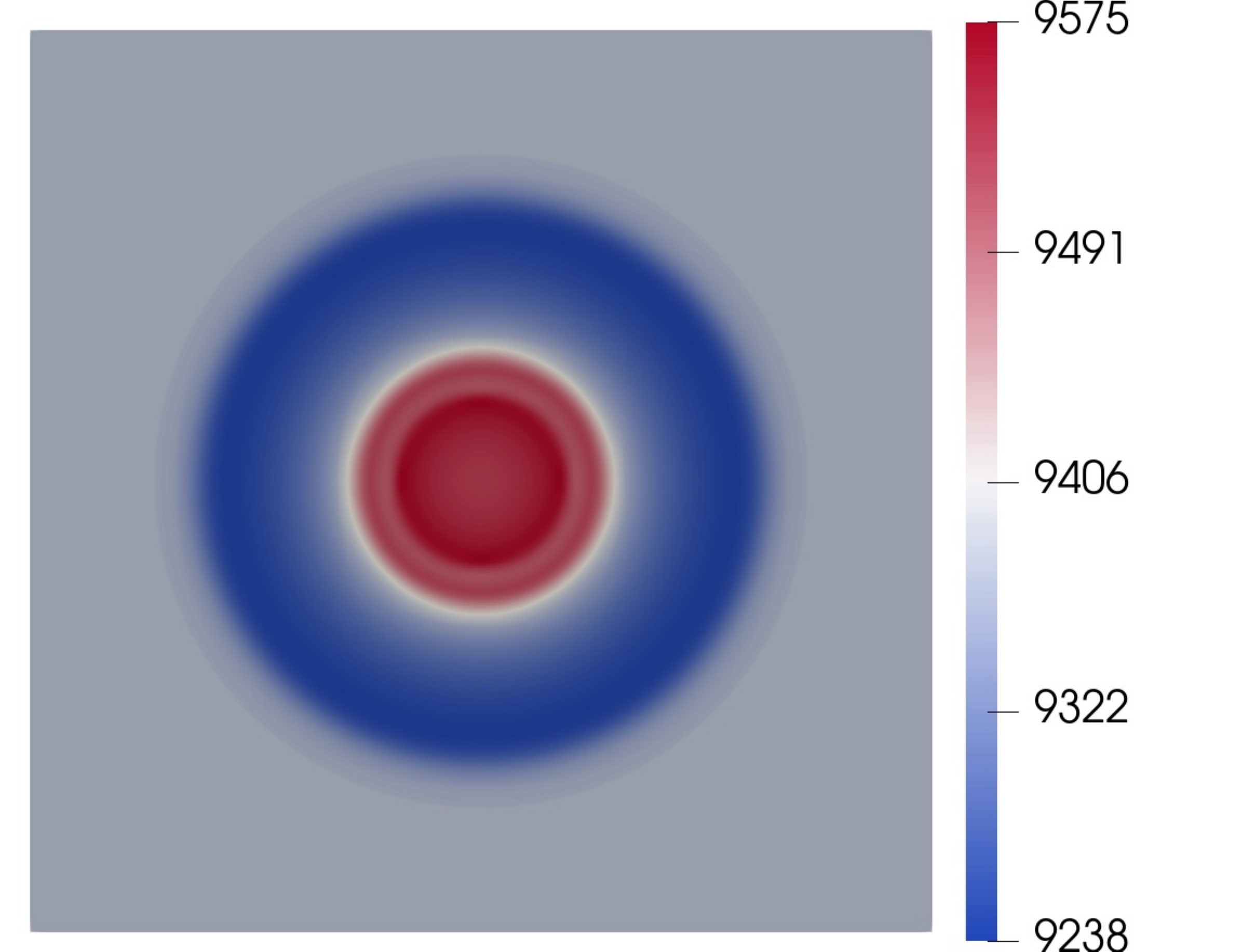}{Density $\rho$.}\hfill
\plotsubfigure[0.47\textwidth]{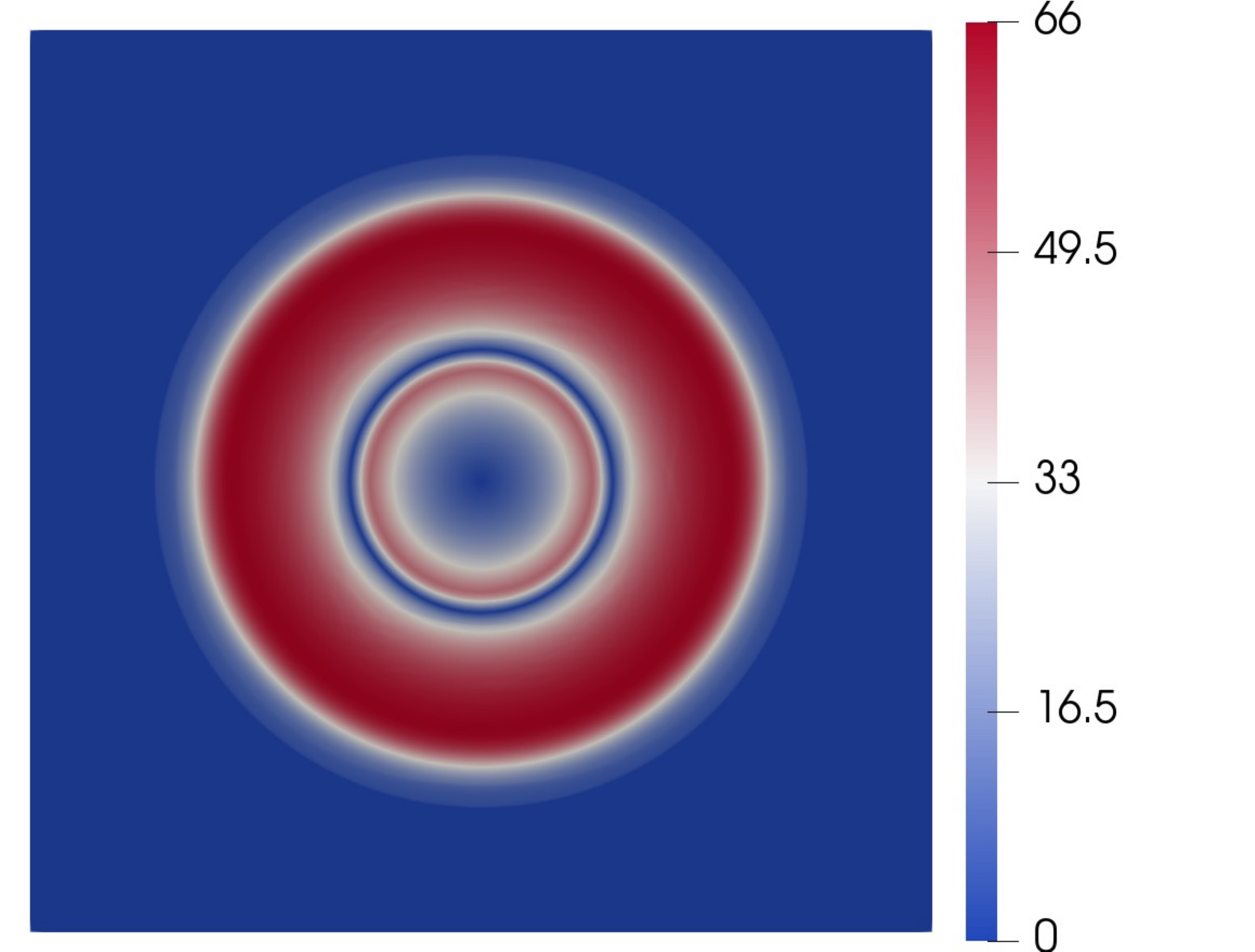}{Velocity magnitude $|\vct|$.}
\par\medskip
\plotsubfigure[0.47\textwidth]{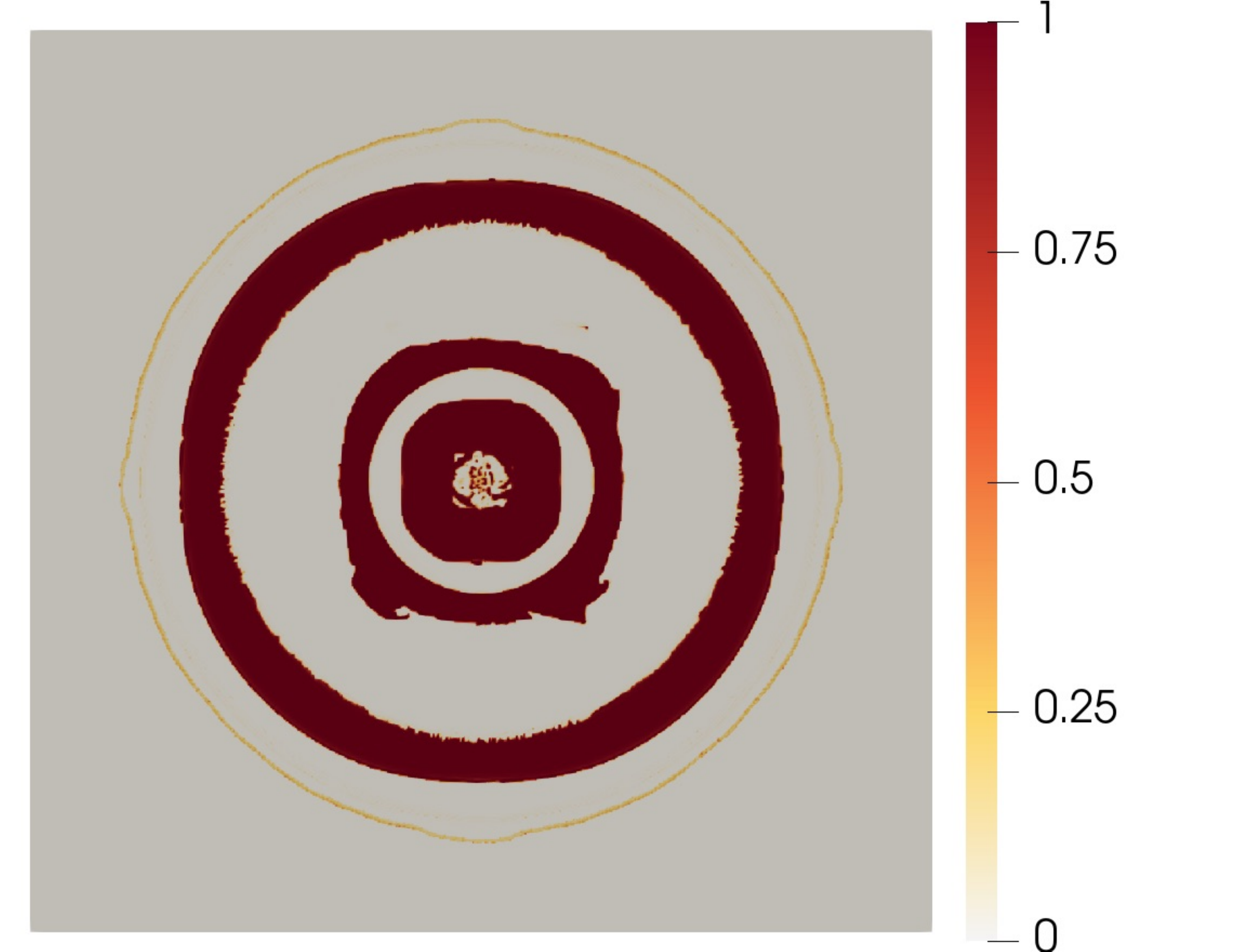}{Final elementwise blending coefficient $\alpha_K$.}\hfill
\plotsubfigure[0.47\textwidth]{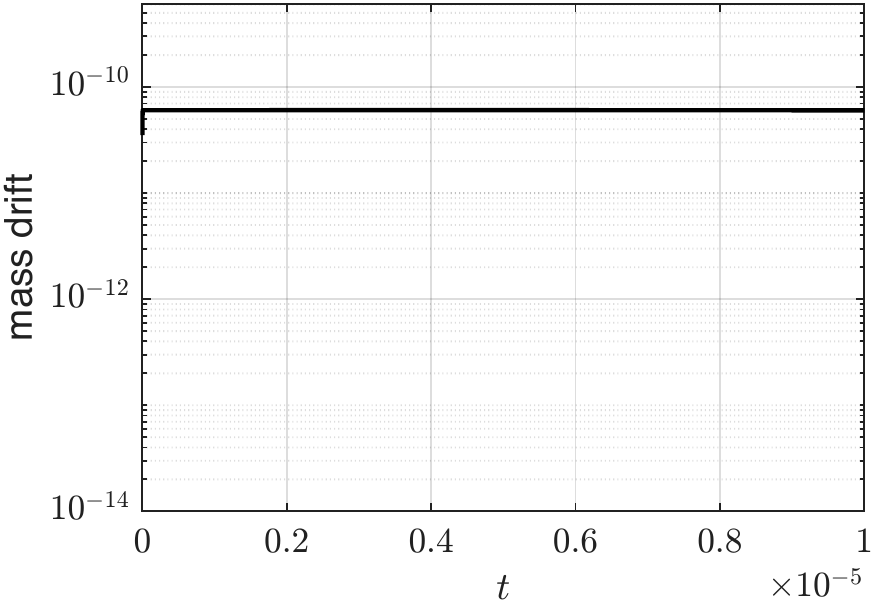}{Mass drift $\varepsilon_M^n$.}
\caption{\textbf{Cylindrical elastoplastic shock:} Final-time fields,
elementwise blending coefficient, and fully discrete mass-drift history at
$t_f=10^{-5}\,\mathrm s$: (a) density, (b) velocity magnitude,
(c) elementwise blending coefficient, and (d) logarithmic
boundary-compensated mass drift.}
\label{fig:cylindrical-shock-diagnostics}
\end{figure}

\section{Concluding remarks}
\label{sec:conclusions}

In this work, we developed an equation-of-state-independent
geometric-compatible finite-volume scheme (GC-FV) and its high-order DGSEM
extension (GC-DG) for the reduced GPR system on fixed Eulerian meshes.  The central construction
follows the same strategy as Tadmor's entropy-conservative flux
construction: we first derive a Tadmor-type geometric two-state identity
that the numerical distortion fluctuations must satisfy, then obtain a
fluctuation pair satisfying this identity through a minimum-change
projection of the path-conservative baseline, and finally lift the
two-state identity to high order using the SBP property.  Rewriting GC-DG
and GC-FV on the same LGL subcells allows them to be blended into a
geometric-compatible global subcell limiter, whose activation is controlled
by complementary modal and face-jump indicators.  The resulting method
preserves the hidden mass law at the semidiscrete level while providing the
dissipation required near discontinuities.  Numerical experiments confirm
the designed high-order accuracy, the expected temporal scaling of the
fully discrete mass drift, and robust shock capturing for fluid, nonlinear
elastic, thermal-wave, and elastoplastic problems.  Future work will
focus on less dissipative local subcell blending, positivity-preserving
stabilization, curvilinear and moving meshes, and the simultaneous
enforcement of geometric and thermodynamic compatibility.

\appendix

\section{Acoustic-tensor reduction for the mechanical wave speed}
\label{app:acoustic-tensor}

We record the derivation used in Section~\ref{sec:GC-FV-wave-speed}.  For
the mechanical energy
\begin{equation}
 e(\rho,s,\A)=e_h(\rho,s)+\frac14b^2(\rho)\Psi,
 \qquad
 \zeta_b:=\frac{\dd\log b^2}{\dd\log\rho},
 \qquad
 c_0^2:=\left.\frac{\partial p_h}{\partial\rho}\right|_s,
 \label{eq:app-acoustic-energy-class}
\end{equation}
where $p_h=\rho^2\partial_\rho e_h|_s$, differentiation of the shear
energy gives
\begin{equation}
 \partial_{\A}e_s=b^2\A D,
 \qquad
 \bm\sigma=\rho b^2GD,
 \qquad
 p_s=\rho^2\partial_\rho e_s
 =\frac14\rho\zeta_b b^2\Psi.
 \label{eq:app-acoustic-stress}
\end{equation}
Thus a density-dependent shear speed contributes to both the isotropic
pressure and the shear traction.

Freeze an admissible state and insert an isentropic plane perturbation
($\delta s=0$) propagating in the unit direction $n$.  For the two
mechanical closures used in this paper, the logarithmic density exponent
$\zeta_b=\rho\,\partial_\rho b^2/b^2$ is constant
($\zeta_b=0$ for the constant-shear gas closure and
$\zeta_b=\beta_B$ for the Barton--Romenski closure).  After eliminating the distortion amplitude with
the linearized kinematic equation, its velocity polarization $\xi$ induces
the following directional variations:
\begin{align}
 \Sigma_G&:=GD, & q&:=G\xi, & \chi&:=n\cdot\xi,
 \notag\\
 \delta G&:=n\otimes q+q\otimes n,
 &\delta D&:=\delta G-\frac13\tr(\delta G)I_3,
 &\delta\Sigma_G&:=\delta G D+G\delta D,
 &\delta\Psi&:=2D:\delta G.
 \label{eq:app-acoustic-increments}
\end{align}
The first two variations follow directly from $G=\A^T\A$ and
$D=\dev G$; the last two are the product and chain rules applied to
$\Sigma_G=GD$ and $\Psi=D:D$.  Moreover,
\begin{equation}
 \delta b^2=\zeta_b b^2\frac{\delta\rho}{\rho},
 \qquad
 \delta p_h=c_0^2\delta\rho,\qquad \delta s=0.
 \label{eq:app-acoustic-scalar-variations}
\end{equation}
Substitution of
\eqref{eq:app-acoustic-increments}--\eqref{eq:app-acoustic-scalar-variations}
into the linearized normal momentum flux gives the acoustic-tensor action
\begin{equation}
 W_n\xi=
 \left[c_0^2\chi+
 \frac{\zeta_b b^2}{4}
 \bigl((1+\zeta_b)\chi\Psi+\delta\Psi\bigr)\right]n
 +b^2\bigl((1+\zeta_b)\chi\Sigma_G+\delta\Sigma_G\bigr)n .
 \label{eq:app-mechanical-acoustic-action}
\end{equation}
The terms proportional to $1+\zeta_b$ collect the density variation of
$\rho b^2$, whereas $\delta\Psi$ and $\delta\Sigma_G$ are the direct
distortion variations.  Taking $b^2=c_s^2$, $\zeta_b=0$ gives
\eqref{eq:acoustic-ideal-action}; taking
$b^2=b_0^2\nu^{\beta_B}$, $\zeta_b=\beta_B$ gives
\eqref{eq:acoustic-Barton-action}.

For a hyperelastic closure, symmetry of the energy Hessian makes the
restriction of $W_n$ to the computational plane symmetric.  The frozen
normal mechanical Jacobian $A_n^{\rm mech}$ then has the factorization
\begin{equation}
 \det(\lambda I-A_n^{\rm mech})
 \propto(\lambda-v_n)^r
 \det\!\left(W_n-(\lambda-v_n)^2I_2\right),
 \label{eq:app-acoustic-characteristic-factorization}
\end{equation}
where $r$ is the multiplicity of the advective speed.  Hence the two
eigenvalues of $W_n$ are the squared in-plane mechanical wave speeds
relative to $v_n$.  This is an exact reduction of the frozen homogeneous
mechanical Jacobian; the linearization is precisely the usual local
characteristic analysis of a nonlinear hyperbolic system.  Evaluating the
larger root of the quadratic characteristic polynomial of $W_n$ yields
\eqref{eq:acoustic-largest-eigenvalue}.

\section{Algebraic properties and proofs for GC-FV and GC-DG}
\label{app:G-proofs}

\subsection{Minimum-change Frobenius projection}
\label{app:Frobenius-projection}

The distortion projections in
\eqref{eq:G-volume-optimization}--\eqref{eq:G-volume-constraints} and
\eqref{eq:PLF-optimization}--\eqref{eq:PLF-optimization-admissible-set}
use the same one-constraint algebraic result, which we state and prove
here.

\begin{lemma}[Minimum-change Frobenius projection]
\label{lem:app-single-projection}
Let $C\ne0$ and $X_0$ be matrices of the same size, and let $b$ be a
scalar.  The constrained problem
\begin{equation}
 \min_{C:X=b}\frac12\|X-X_0\|_F^2
 \label{eq:app-single-projection}
\end{equation}
has the unique minimizer
\begin{equation}
 X^\star=X_0+\frac{b-C:X_0}{C:C}\,C.
 \label{eq:app-single-projection-solution}
\end{equation}
\end{lemma}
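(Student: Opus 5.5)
The plan is to treat this as orthogonal projection onto an affine hyperplane in the Euclidean space $\mathbb R^{p\times q}$ with inner product $X:Y$. The minimizer will be identified by decomposing every feasible point relative to the candidate $X^\star$; Lagrange multipliers are not needed.

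First I check feasibility of $X^\star$ in \eqref{eq:app-single-projection-solution}. Contracting with $C$ and using bilinearity gives $C:X^\star=C:X_0+(b-C:X_0)(C:C)/(C:C)=b$. This is well defined because $C\ne0$ implies $C:C=\|C\|_F^2>0$.

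Next, I take an arbitrary feasible $X$ and write $X-X_0=(X-X^\star)+(X^\star-X_0)$. Here $X^\star-X_0=\lambda C$ with $\lambda=(b-C:X_0)/(C:C)$. Since both $X$ and $X^\star$ satisfy the constraint, $C:(X-X^\star)=0$, so the cross term $2\lambda\,C:(X-X^\star)$ vanishes. This gives the Pythagorean identity
\begin{equation*}
 \tfrac12\|X-X_0\|_F^2=\tfrac12\|X-X^\star\|_F^2+\tfrac12\|X^\star-X_0\|_F^2 .
\end{equation*}
The right-hand side is minimized exactly when $X=X^\star$, which proves both minimality and uniqueness at once.

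No step is a real obstacle; the only point needing care is recording $C:C>0$ so that the quotient exists. In the applications, $C=\w_a=\cof\A_a$ with $g_a>0$, and nonvanishing holds there as already noted before \eqref{eq:G-endpoint-projector}. Equivalently, one could observe that the objective is strictly convex and the constraint is affine, so the stationarity condition $X-X_0=\lambda C$ yields the unique solution. The decomposition above makes this explicit without invoking convex-analysis facts.
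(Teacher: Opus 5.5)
Your proposal is correct and essentially matches the paper's proof. The paper also establishes minimality and uniqueness through the Pythagorean splitting $X=X^\star+Z$ with $C:Z=0$. It differs only in first deriving the formula from Lagrangian stationarity, a step your direct feasibility check $C:X^\star=b$ makes logically unnecessary.
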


\begin{proof}
Introduce the Lagrangian
\begin{equation}
 \mathcal L_{\rm proj}(X,\mu)
 :=\frac12\|X-X_0\|_F^2+\mu(C:X-b).
 \label{eq:app-projection-Lagrangian}
\end{equation}
Stationarity with respect to $X$ gives
$X-X_0+\mu C=0$.  Contracting this relation with $C$ and imposing
$C:X=b$ yield
\begin{equation}
 \mu=\frac{C:X_0-b}{C:C},
 \qquad
 X=X_0+\frac{b-C:X_0}{C:C}\,C=X^\star.
 \label{eq:app-projection-stationarity}
\end{equation}
Because $C\ne0$, the affine constraint set is nonempty and $C:C>0$.
Moreover, for any other feasible matrix $X=X^\star+Z$, one has $C:Z=0$.
Since $X^\star-X_0$ is parallel to $C$, the Frobenius orthogonality gives
\begin{equation}
 \|X-X_0\|_F^2
 =\|X^\star-X_0\|_F^2+\|Z\|_F^2
 \ge \|X^\star-X_0\|_F^2,
 \label{eq:app-projection-Pythagoras}
\end{equation}
with equality only for $Z=0$.  Thus $X^\star$ is the unique minimizer.
\end{proof}

Applying Lemma~\ref{lem:app-single-projection} independently to the endpoint
constraints gives
the GC-FV pair \eqref{eq:G-volume-pair} and the projected LF increments
\eqref{eq:PLF}.  Since admissibility implies
$\w_a\ne0$, these denominators are endpoint quantities, not vanishing
jump norms.  By contrast, enforcing a zero-sum correction to the cubic
defect produces $C=\Delta\w$ and the degenerate denominator
$\|\Delta\w\|_F^2$.

\subsection{Exchange and face-orientation identities}
\label{app:GC-exchange-identities}
Two distinct orientation properties are used in the main text.  The
fixed-normal exchange property supports
Lemma~\ref{lem:G-core-companion} and the SBP volume pair
\eqref{eq:G-full-pair}; simultaneous state exchange and normal reversal
support the element-face identity \eqref{eq:GC-FV-orientation}.  They are
stated together below but proved separately.

\begin{lemma}[Exchange and orientation properties of the GC-FV pairs]
\label{lem:app-GC-exchange-orientation}
For admissible states $U_L,U_R$ and a unit normal $n$, the nondissipative
volume pair \eqref{eq:G-full-pair} satisfies the fixed-normal exchange
identity
\begin{equation}
 \mathcal K^{-,G}(U_R,U_L;n)
 =-\mathcal K^{+,G}(U_L,U_R;n).
 \label{eq:app-G-volume-exchange}
\end{equation}
The complete dissipative face pair
\eqref{eq:GC-FV-left}--\eqref{eq:GC-FV-right} satisfies the orientation
identity
\begin{equation}
 \widehat{\mathcal K}_G^-(U_R,U_L;-n)
 =\widehat{\mathcal K}_G^+(U_L,U_R;n).
 \label{eq:app-G-face-orientation}
\end{equation}
\end{lemma}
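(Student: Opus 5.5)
The plan is to verify both identities blockwise, since $\mathcal K^{\pm,G}$ and $\widehat{\mathcal K}_G^\pm$ are built from three independent blocks ($Y$, $\A$, $\Jv$).  Throughout, I track how the elementary two-state quantities transform.  Under the fixed-normal exchange $L\leftrightarrow R$ with $n$ fixed, one has $\Delta\A\mapsto-\Delta\A$, $\Delta\vct\mapsto-\Delta\vct$, $\Delta\Jv\mapsto-\Delta\Jv$, $\Delta T\mapsto-\Delta T$.  Averages such as $\overline v_n$, $\A_c$ and $F_\rho^0$ are invariant, and $P_L\leftrightarrow P_R$, $h_L\leftrightarrow h_R$.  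Since $a_{\Delta A}=(\A_c\tcross\Delta\A):\Delta\A$ is quadratic in $\Delta\A$ with $\A_c$ invariant, it is unchanged, while the cubic $d_{\Delta A}=\det(\Delta\A)$ changes sign.  Under the additional reversal $n\mapsto-n$, every normal quantity ($v_n$, $h$, $F_\rho^0$, $f_Y$) changes sign, and the jumps are reversed a second time.

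For \eqref{eq:app-G-volume-exchange}, the $Y$ block is immediate.  Because $F_Y^0$ is symmetric, $F_Y^0(U_R,U_L)-f_Y(U_R)=-(f_Y(U_R)-F_Y^0(U_L,U_R))$.  For the baselines, substituting the rules into \eqref{eq:G-baseline-minus} gives
\[
\mathcal H_{A,0}^-(U_R,U_L;n)=-\tfrac12\overline v_n\Delta\A-\tfrac12(\A_R\Delta\vct)\otimes n=-\mathcal H_{A,0}^+(U_L,U_R;n),
\]
and the $\Jv$ baseline transforms in exactly the same way.  It then remains to show that $R_L^G(U_R,U_L)=-R_R^G(U_L,U_R)$.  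Using the closed forms \eqref{eq:G-projection-residuals-closed}, the product $\Delta g\,\Delta v_n$ is exchange invariant and $a_{\Delta A}-d_{\Delta A}\mapsto a_{\Delta A}+d_{\Delta A}$.  This turns $R_L^G$ into $\tfrac14\Delta g\Delta v_n+\tfrac14\overline v_n(a_{\Delta A}+d_{\Delta A})=-R_R^G$, and together with $P_L\leftrightarrow P_R$ the $\A$ block follows.  As a cross-check, the same conclusion follows from uniqueness in Lemma~\ref{lem:app-single-projection}.  The exchanged problem \eqref{eq:G-volume-optimization}--\eqref{eq:G-volume-constraints} has baselines and right-hand sides equal to the negatives of the original ones, and the objective is invariant under negation.

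For \eqref{eq:app-G-face-orientation}, I again work blockwise.  In the $Y$ block, $f_Y(\cdot;-n)=-f_Y(\cdot;n)$, so $F_Y^0$ changes sign, and $-\ell_f\Delta Y\mapsto+\ell_f\Delta Y$ because $\ell_f$ is symmetric and orientation independent.  Combining these gives $f_Y(U_R)-F_Y^0+\ell_f\Delta Y$, as required.  For the nondissipative $\A$ and $\Jv$ parts, I would write $(U_R,U_L;-n)$ as the exchange applied to $(U_L,U_R;-n)$ and then use the step above.  It then suffices to check that $\mathcal H^{+,G}(U_L,U_R;-n)=-\mathcal H^{+,G}(U_L,U_R;n)$.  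This holds because every term of the baseline, $\overline v_n\Delta\A$ and $(\A_R\Delta\vct)\otimes n$, is odd in $n$.  Likewise $R_R^G$ is odd in $n$, being built from $\Delta v_n$, $\overline v_n$ and the target flux $h_G^t$.  The dissipative increment is exactly \eqref{eq:PLF-lemma-orientation} of Lemma~\ref{lem:G-PLF}, and for $D_J$ the check is trivial.

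I expect the main obstacle to be bookkeeping rather than a genuine difficulty.  The delicate point is the sign of the cubic term $d_{\Delta A}$ together with the asymmetric $\pm d_{\Delta A}$ placement in $R_L^G$ versus $R_R^G$; it is precisely this asymmetry that makes the exchange close.  I would first check that point directly from \eqref{eq:G-projection-residuals-closed}, and then fall back on the uniqueness argument if the closed forms prove awkward.
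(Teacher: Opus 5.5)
Your proposal is correct and follows essentially the same route as the paper. Both arguments use the same ingredients:
\begin{itemize}
\item blockwise transformation rules: state jumps reversed, averages fixed, $a_{\Delta A}$ invariant, $d_{\Delta A}$ odd, and $P_L\leftrightarrow P_R$, which give $R_L^G(U_R,U_L;n)=-R_R^G(U_L,U_R;n)$;
\item oddness of every nondissipative piece in $n$, combined with the fixed-normal exchange;
\item invariance of $\ell_f$ for the LF increments.
\end{itemize}
The differences are only cosmetic. You check the residual swap through the closed forms \eqref{eq:G-projection-residuals-closed}; it also follows directly from $h_{0,L}(U_R,U_L)=h_{0,R}(U_L,U_R)$. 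You cite \eqref{eq:PLF-lemma-orientation} for the $D_A$ block, whereas the paper verifies that identity inline via $\gamma_L\mapsto-\gamma_R$.
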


\begin{proof}
First exchange $U_L$ and $U_R$ while keeping $n$ fixed.  Then
$\Delta\A\mapsto-\Delta\A$, $\Delta\vct\mapsto-\Delta\vct$,
$\Delta\Jv\mapsto-\Delta\Jv$, and $\Delta T\mapsto-\Delta T$, whereas
the arithmetic averages remain unchanged.  The straight-path pairs
\eqref{eq:G-baseline-minus}--\eqref{eq:G-baseline-plus} and
\eqref{eq:G-J-baseline-minus}--\eqref{eq:G-J-baseline-plus} consequently
satisfy
\begin{equation}
 \begin{aligned}
 \mathcal H_{A,0}^-(U_R,U_L;n)
 &=-\mathcal H_{A,0}^+(U_L,U_R;n),\\
 \mathcal H_{J,0}^-(U_R,U_L;n)
 &=-\mathcal H_{J,0}^+(U_L,U_R;n).
 \end{aligned}
 \label{eq:app-G-baseline-exchange}
\end{equation}
Moreover, $F_Y^0$ and $h_G^t$ are symmetric in their two state arguments,
and the endpoint projection data obey
\begin{equation}
 R_L^G(U_R,U_L;n)=-R_R^G(U_L,U_R;n),
 \qquad P_L(U_R,U_L)=P_R(U_L,U_R).
 \label{eq:app-G-exchange-data}
\end{equation}
Substitution into \eqref{eq:G-volume-pair} proves
\begin{equation}
 \mathcal H_A^{-,G}(U_R,U_L;n)
 =-\mathcal H_A^{+,G}(U_L,U_R;n),
 \label{eq:app-G-projected-exchange}
\end{equation}
which is \eqref{eq:G-core-lemma-exchange}.  Combining the conservative,
distortion, and thermal-impulse blocks proves
\eqref{eq:app-G-volume-exchange}.

Now exchange the states and simultaneously replace $n$ by $-n$.
Every nondissipative normal flux and fluctuation is odd in $n$; hence
\eqref{eq:app-G-volume-exchange} gives
\begin{equation}
 \mathcal K^{-,G}(U_R,U_L;-n)
 =\mathcal K^{+,G}(U_L,U_R;n).
 \label{eq:app-G-volume-orientation}
\end{equation}
The LF coefficient is invariant under this transformation, while
$\Delta Y\mapsto-\Delta Y$ and $\Delta\Jv\mapsto-\Delta\Jv$.  Therefore
$D_Y^-(U_R,U_L;-n)=D_Y^+(U_L,U_R;n)$ and
$D_J^-(U_R,U_L;-n)=D_J^+(U_L,U_R;n)$.  For the distortion correction,
\begin{equation}
 \Delta\A\mapsto-\Delta\A,\qquad
 a_{\Delta A}\mapsto a_{\Delta A},\qquad
 d_{\Delta A}\mapsto-d_{\Delta A},\qquad
 P_L\leftrightarrow P_R,\qquad \gamma_L\mapsto-\gamma_R.
 \label{eq:app-G-LF-exchange-data}
\end{equation}
Using \eqref{eq:PLF} therefore gives
\begin{equation}
 D_A^-(U_R,U_L;-n)
 =-\ell_f(-\Delta\A-\gamma_RP_R)
 =D_A^+(U_L,U_R;n),
 \label{eq:app-G-LF-orientation-proof}
\end{equation}
which also proves \eqref{eq:PLF-lemma-orientation}.  Adding these three LF
blocks to \eqref{eq:app-G-volume-orientation} proves
\eqref{eq:app-G-face-orientation}, equivalently
\eqref{eq:GC-FV-orientation}.
\end{proof}

\subsection{Smooth-state size of the projection and path-sum defects}
\label{app:GC-path-estimates}
Remark~\ref{rem:GC-FV-Boscheri-comparison} identifies two departures from
the straight-path fluctuation sum: one introduced by the nondissipative
companion projection and one introduced by the projected LF correction.
The purpose of this subsection is to justify the smooth-state orders quoted
there.  These are local asymptotic estimates as $U_R\to U_L$; they do not
provide a bound for a finite jump at a shock.

Let $\delta:=\|\Delta U\|=O(h)$ for two neighboring states sampled from a
smooth solution.  On a compact admissible state set, smoothness of the
determinant and velocity gives
$\Delta g=O(\delta)$, $\Delta v_n=O(\delta)$,
$a_{\Delta A}=O(\delta^2)$, and
$d_{\Delta A}=O(\delta^3)$.  Substitution into the closed
forms \eqref{eq:G-projection-residuals-closed} first gives
\begin{equation}
 R_L^G=O(\delta^2),\qquad R_R^G=O(\delta^2).
 \label{eq:app-G-projection-order}
\end{equation}
Thus each endpoint modification in \eqref{eq:G-volume-pair} vanishes
quadratically at the equal-state diagonal.  For the path-sum defect, the
leading quadratic contributions cancel.  Indeed, write
$\overline P=(P_L+P_R)/2$ and $\Delta P=P_R-P_L$.  From
\eqref{eq:G-projection-residuals} and \eqref{eq:G-baseline-defect},
\begin{equation}
 R_L^G+R_R^G=h_{0,R}-h_{0,L}
 =-\frac12\overline v_n d_{\Delta A}=O(\delta^3),
 \qquad R_R^G-R_L^G=O(\delta^2).
 \label{eq:app-G-path-cancellation}
\end{equation}
Since $P_a$ is smooth on compact admissible sets,
$\Delta P=O(\delta)$, and therefore
\begin{equation}
 \mathcal P_A^{V,G}
 =(R_L^G+R_R^G)\overline P
 +\frac12(R_R^G-R_L^G)\Delta P
 =O(\delta^3).
 \label{eq:app-G-cubic-path-defect}
\end{equation}
For the dissipative correction, $P_L+P_R=O(1)$ and
$P_R-P_L=O(\delta)$.  Equation~\eqref{eq:PLF-path-defect} consequently
gives
\begin{equation}
 \mathcal P_A^{D,G}
 =O(\ell_f\delta^2)+O(\ell_f\delta^4)
 =O(\ell_f\delta^2).
 \label{eq:app-G-dissipative-path-order}
\end{equation}
Hence the nondissipative and dissipative path-sum defects vanish,
respectively, to cubic and quadratic order for smooth neighboring states,
which proves the estimates stated in
Remark~\ref{rem:GC-FV-Boscheri-comparison}.  At a discontinuity
$\delta$ is not small, so neither conclusion should be interpreted as a
uniform shock estimate.

\subsection{Smooth equal-state regularity of the companion projection}
\label{app:GC-DG-proof}
This subsection establishes the smooth, direction-independent equal-state
extension asserted in Lemma~\ref{lem:G-core-companion} and explains why the
companion projection is consistent with the high-order construction.  The
key point is that the projection is nonsingular and changes neither the
diagonal value nor the first diagonal derivative of the straight-path
baseline.

Because $P_a=\w_a/(\w_a:\w_a)$ and $\w_a\ne0$ whenever $g_a>0$, the
endpoint projector is smooth and bounded on every compact admissible set
bounded away from $g=0$.  Let $U_L=U$ and
$U_R=U+\tau V$.  The estimate
\eqref{eq:app-G-projection-order} gives
$R_L^GP_L=O(\tau^2)$ and
$R_R^GP_R=O(\tau^2)$.  Therefore both corrections converge to zero
independently of the direction $V$, which establishes the smooth
equal-state extension of \eqref{eq:G-volume-pair}.  Moreover,
\begin{equation}
 \left.\frac{\dd}{\dd\tau}(R_L^GP_L)
 \right|_{\tau=0}
 =\left.\frac{\dd}{\dd\tau}(R_R^GP_R)
 \right|_{\tau=0}=0.
 \label{eq:app-G-zero-diagonal-derivative}
\end{equation}
The projected pair thus has the same first diagonal linearization as the
consistent straight-path pair.  Consequently, the projection has no jump
denominator, remains bounded and smooth as the two states coalesce, and
adds only an $O(\tau^2)$ perturbation to the baseline.  In this
precise local sense, it is consistent with the high-order SBP construction
and does not by itself introduce a singular or leading-order correction
that would reduce the spatial accuracy.


\begin{thebibliography}{99}

\bibitem{PeshkovRomenski2016}
I.~Peshkov and E.~Romenski,
\newblock A hyperbolic model for viscous Newtonian flows,
\newblock \emph{Continuum Mechanics and Thermodynamics}, 28 (2016), 85--104.

\bibitem{Dumbser2016}
M.~Dumbser, I.~Peshkov, E.~Romenski, and O.~Zanotti,
\newblock High order ADER schemes for a unified first order hyperbolic
formulation of continuum mechanics: viscous heat-conducting fluids and
elastic solids,
\newblock \emph{Journal of Computational Physics}, 314 (2016), 824--862.

\bibitem{Peshkov2018}
I.~Peshkov, M.~Pavelka, E.~Romenski, and M.~Grmela,
\newblock Continuum mechanics and thermodynamics in the Hamilton and the
Godunov-type formulations,
\newblock \emph{Continuum Mechanics and Thermodynamics}, 30 (2018),
1343--1378.

\bibitem{Jackson2017}
H.~Jackson,
\newblock A fast numerical scheme for the Godunov--Peshkov--Romenski model
of continuum mechanics,
\newblock \emph{Journal of Computational Physics}, 348 (2017), 514--533.

\bibitem{JacksonNikiforakis2019}
H.~Jackson and N.~Nikiforakis,
\newblock A numerical scheme for non-Newtonian fluids and plastic solids
under the GPR model,
\newblock \emph{Journal of Computational Physics}, 387 (2019), 410--429.

\bibitem{PeshkovEtAl2019}
I.~Peshkov, W.~Boscheri, R.~Loub\`ere, E.~Romenski, and M.~Dumbser,
\newblock Theoretical and numerical comparison of hyperelastic and
hypoelastic formulations for Eulerian non-linear elastoplasticity,
\newblock \emph{Journal of Computational Physics}, 387 (2019), 481--521.

\bibitem{Tavelli2020}
M.~Tavelli, S.~Chiocchetti, E.~Romenski, A.-A.~Gabriel, and M.~Dumbser,
\newblock Space--time adaptive ADER discontinuous Galerkin schemes for
nonlinear hyperelasticity with material failure,
\newblock \emph{Journal of Computational Physics}, 422 (2020), 109758.

\bibitem{Gabriel2021}
A.-A.~Gabriel, D.~Li, S.~Chiocchetti, M.~Tavelli, I.~Peshkov,
E.~Romenski, and M.~Dumbser,
\newblock A unified first-order hyperbolic model for nonlinear dynamic
rupture processes in diffuse fracture zones,
\newblock \emph{Philosophical Transactions of the Royal Society A},
379 (2021), 20200130.

\bibitem{Busto2022}
S.~Busto, M.~Dumbser, I.~Peshkov, and E.~Romenski,
\newblock On thermodynamically compatible finite volume schemes for
continuum mechanics,
\newblock \emph{SIAM Journal on Scientific Computing}, 44 (2022),
A1723--A1751.

\bibitem{BustoDumbser2022DG}
S.~Busto and M.~Dumbser,
\newblock A new family of thermodynamically compatible discontinuous
Galerkin methods for continuum mechanics and turbulent shallow water flows,
\newblock \emph{Journal of Scientific Computing}, 93 (2022), 56.

\bibitem{Abgrall2023}
R.~Abgrall, S.~Busto, and M.~Dumbser,
\newblock A simple and general framework for the construction of
thermodynamically compatible schemes for computational fluid and solid
mechanics,
\newblock \emph{Applied Mathematics and Computation}, 440 (2023), 127629.

\bibitem{Bauer2006}
A.~Bauer, D.~E.~Burton, E.~Caramana, R.~Loub\`ere, M.~Shashkov, and
P.~Whalen,
\newblock The internal consistency, stability, and accuracy of the discrete,
compatible formulation of Lagrangian hydrodynamics,
\newblock \emph{Journal of Computational Physics}, 218 (2006), 572--593.

\bibitem{Maire2009}
P.-H.~Maire,
\newblock A high-order cell-centered Lagrangian scheme for two-dimensional
compressible fluid flows on unstructured meshes,
\newblock \emph{Journal of Computational Physics}, 228 (2009), 2391--2425.

\bibitem{KluthDespres2010}
G.~Kluth and B.~Despr\'es,
\newblock Discretization of hyperelasticity on unstructured mesh with a
cell-centered Lagrangian scheme,
\newblock \emph{Journal of Computational Physics}, 229 (2010), 9092--9118.

\bibitem{BoscheriLagrangian2022}
W.~Boscheri, S.~Chiocchetti, and I.~Peshkov,
\newblock A cell-centered implicit--explicit Lagrangian scheme for a unified
model of nonlinear continuum mechanics on unstructured meshes,
\newblock \emph{Journal of Computational Physics}, 451 (2022), 110852.

\bibitem{Boscheri2024}
W.~Boscheri, R.~Loub\`ere, J.-P.~Braeunig, and P.-H.~Maire,
\newblock A geometrically and thermodynamically compatible finite volume
scheme for continuum mechanics on unstructured polygonal meshes,
\newblock \emph{Journal of Computational Physics}, 507 (2024), 112957.

\bibitem{Boscheri2026}
W.~Boscheri, M.~Dumbser, R.~Loub\`ere, and P.-H.~Maire,
\newblock A structure-preserving and thermodynamically compatible
cell-centered Lagrangian finite volume scheme for continuum mechanics,
\newblock \emph{SIAM Journal on Numerical Analysis}, 64 (2026), 685--707.

\bibitem{DumbserChiocchettiPeshkov2020}
M.~Dumbser, S.~Chiocchetti, and I.~Peshkov,
\newblock On numerical methods for hyperbolic PDE with curl involutions,
\newblock in \emph{Continuum Mechanics, Applied Mathematics and Scientific
Computing: Godunov's Legacy}, Springer, 2020, pp.~125--134.

\bibitem{Abgrall2025}
R.~Abgrall, M.~Dumbser, and P.-H.~Maire,
\newblock A simple and general framework for the construction of exactly
div--curl--grad compatible discontinuous Galerkin finite element schemes on
unstructured simplex meshes,
\newblock \emph{Journal of Computational Physics}, 541 (2025), 114340.

\bibitem{Dumbser2026}
M.~Dumbser, A.~Thomann, M.~Tavelli, and W.~Boscheri,
\newblock A structure-preserving semi-implicit four-split scheme for
continuum mechanics,
\newblock \emph{Journal of Computational Physics}, 559 (2026), 114889.

\bibitem{DumbserSubcell2014}
M.~Dumbser, O.~Zanotti, R.~Loub\`ere, and S.~Diot,
\newblock A posteriori subcell limiting of the discontinuous Galerkin finite
element method for hyperbolic conservation laws,
\newblock \emph{Journal of Computational Physics}, 278 (2014), 47--75.

\bibitem{Bonet2015}
J.~Bonet, A.~J.~Gil, and R.~Ortigosa,
\newblock A computational framework for polyconvex large strain elasticity,
\newblock \emph{Computer Methods in Applied Mechanics and Engineering},
283 (2015), 1061--1094.

\bibitem{Betsch2018}
P.~Betsch, A.~Janz, and C.~Hesch,
\newblock A mixed variational framework for the design of energy--momentum
schemes inspired by the structure of polyconvex stored energy functions,
\newblock \emph{Computer Methods in Applied Mechanics and Engineering},
335 (2018), 660--696.

\bibitem{Osher1984}
S.~Osher,
\newblock Riemann solvers, the entropy condition, and difference
approximations,
\newblock \emph{SIAM Journal on Numerical Analysis}, 21 (1984), 217--235.

\bibitem{OsherTadmor1988}
S.~Osher and E.~Tadmor,
\newblock On the convergence of difference approximations to scalar
conservation laws,
\newblock \emph{Mathematics of Computation}, 50 (1988), 19--51.

\bibitem{Tadmor1987}
E.~Tadmor,
\newblock The numerical viscosity of entropy stable schemes for systems of
conservation laws. I,
\newblock \emph{Mathematics of Computation}, 49 (1987), 91--103.

\bibitem{Chandrashekar2013}
P.~Chandrashekar,
\newblock Kinetic energy preserving and entropy stable finite volume schemes
for compressible Euler and Navier--Stokes equations,
\newblock \emph{Communications in Computational Physics}, 14 (2013),
1252--1286.

\bibitem{FisherCarpenter2013}
T.~C.~Fisher and M.~H.~Carpenter,
\newblock High-order entropy stable finite difference schemes for nonlinear
conservation laws: finite domains,
\newblock \emph{Journal of Computational Physics}, 252 (2013), 518--557.

\bibitem{CarpenterFisherNielsenFrankel2014}
M.~H.~Carpenter, T.~C.~Fisher, E.~J.~Nielsen, and S.~H.~Frankel,
\newblock Entropy stable spectral collocation schemes for the Navier--Stokes
equations: discontinuous interfaces,
\newblock \emph{SIAM Journal on Scientific Computing}, 36 (2014),
B835--B867.

\bibitem{GassnerWintersKopriva2016}
G.~J.~Gassner, A.~R.~Winters, and D.~A.~Kopriva,
\newblock Split form nodal discontinuous Galerkin schemes with
summation-by-parts property for the compressible Euler equations,
\newblock \emph{Journal of Computational Physics}, 327 (2016), 39--66.

\bibitem{ChenShu2017}
T.~Chen and C.-W.~Shu,
\newblock Entropy stable high order discontinuous Galerkin methods with
suitable quadrature rules for hyperbolic conservation laws,
\newblock \emph{Journal of Computational Physics}, 345 (2017), 427--461.

\bibitem{Chan2018}
J.~Chan,
\newblock On discretely entropy conservative and entropy stable
discontinuous Galerkin methods,
\newblock \emph{Journal of Computational Physics}, 362 (2018), 346--374.

\bibitem{Castro2013}
M.~J.~Castro, U.~S.~Fjordholm, S.~Mishra, and C.~Par\'es,
\newblock Entropy conservative and entropy stable schemes for
nonconservative hyperbolic systems,
\newblock \emph{SIAM Journal on Numerical Analysis}, 51 (2013), 1371--1391.

\bibitem{Renac2019}
F.~Renac,
\newblock Entropy stable DGSEM for nonlinear hyperbolic systems in
nonconservative form with application to two-phase flows,
\newblock \emph{Journal of Computational Physics}, 382 (2019), 1--26.

\bibitem{CockburnShu2001}
B.~Cockburn and C.-W.~Shu,
\newblock Runge--Kutta discontinuous Galerkin methods for
convection-dominated problems,
\newblock \emph{Journal of Scientific Computing}, 16 (2001), 173--261.

\bibitem{Krivodonova2004}
L.~Krivodonova, J.~Xin, J.-F.~Remacle, N.~Chevaugeon, and J.~E.~Flaherty,
\newblock Shock detection and limiting with discontinuous Galerkin methods
for hyperbolic conservation laws,
\newblock \emph{Applied Numerical Mathematics}, 48 (2004), 323--338.

\bibitem{PengSunWu2025}
M.~Peng, Z.~Sun, and K.~Wu,
\newblock OEDG: oscillation-eliminating discontinuous Galerkin method for
hyperbolic conservation laws,
\newblock \emph{Mathematics of Computation}, 94 (2025), 1147--1198.

\bibitem{YangFuOEESDG}
J.~Yang and G.~Fu,
\newblock An entropy-stable oscillation-eliminating DGSEM for the Euler
equations on curvilinear meshes,
\newblock \emph{Journal of Computational Physics}, 568 (2027), 115383,
\newblock \url{https://doi.org/10.1016/j.jcp.2026.115383}.

\bibitem{Hennemann2021}
S.~Hennemann, A.~M.~Rueda-Ram\'irez, F.~J.~Hindenlang, and G.~J.~Gassner,
\newblock A provably entropy stable subcell shock capturing approach for
high order split form DG for the compressible Euler equations,
\newblock \emph{Journal of Computational Physics}, 426 (2021), 109935.

\bibitem{Pazner2021}
W.~Pazner,
\newblock Sparse invariant domain preserving discontinuous Galerkin methods
with subcell convex limiting,
\newblock \emph{Computer Methods in Applied Mechanics and Engineering}, 382
(2021), 113876.

\bibitem{RuedaRamirezPaznerGassner2022}
A.~M.~Rueda-Ram\'irez, W.~Pazner, and G.~J.~Gassner,
\newblock Subcell limiting strategies for discontinuous Galerkin spectral
element methods,
\newblock \emph{Computers \& Fluids}, 247 (2022), 105627.

\bibitem{RuedaRamirezBolmKuzminGassner2024}
A.~M.~Rueda-Ram\'irez, B.~Bolm, D.~Kuzmin, and G.~J.~Gassner,
\newblock Monolithic convex limiting for Legendre--Gauss--Lobatto
discontinuous Galerkin spectral-element methods,
\newblock \emph{Communications on Applied Mathematics and Computation}, 6
(2024), 1860--1898.

\bibitem{LinChan2024}
Y.~Lin and J.~Chan,
\newblock High order entropy stable discontinuous Galerkin spectral element
methods through subcell limiting,
\newblock \emph{Journal of Computational Physics}, 498 (2024), 112677.

\bibitem{Vilar2025}
F.~Vilar,
\newblock Local subcell monolithic DG/FV convex property preserving scheme on
unstructured grids and entropy consideration,
\newblock \emph{Journal of Computational Physics}, 521 (2025), 113535.

\bibitem{Zhao2026}
Z.~Zhao, M.~Zhao, S.~Hao, Q.~Ding, J.~Chen, and Z.~Liu,
\newblock A subcell entropy-constrained limiter for entropy stable
discontinuous Galerkin methods,
\newblock \emph{Journal of Computational Physics}, 563 (2026), 115113.

\bibitem{RuedaRamirezGassner2024}
A.~M.~Rueda-Ram\'irez and G.~J.~Gassner,
\newblock A flux-differencing formula for split-form summation-by-parts
discretizations of non-conservative systems: Applications to subcell
limiting for magneto-hydrodynamics,
\newblock \emph{Journal of Computational Physics}, 496 (2024), 112607.

\bibitem{PerssonPeraire2006}
P.-O.~Persson and J.~Peraire,
\newblock Sub-cell shock capturing for discontinuous Galerkin methods,
\newblock in \emph{44th AIAA Aerospace Sciences Meeting and Exhibit},
AIAA Paper 2006-0112, 2006,
\newblock \url{https://doi.org/10.2514/6.2006-112}.

\bibitem{Barton2010}
P.~T.~Barton, D.~Drikakis, and E.~I.~Romenski,
\newblock An Eulerian finite-volume scheme for large elastoplastic
deformations in solids,
\newblock \emph{International Journal for Numerical Methods in
Engineering}, 81 (2010), 453--484.

\bibitem{BartonEtAl2009}
P.~T.~Barton, D.~Drikakis, E.~Romenski, and V.~A.~Titarev,
\newblock Exact and approximate solutions of Riemann problems in nonlinear
elasticity,
\newblock \emph{Journal of Computational Physics}, 228 (2009), 7046--7068.

\bibitem{DalMaso1995}
G.~Dal Maso, P.~G.~LeFloch, and F.~Murat,
\newblock Definition and weak stability of nonconservative products,
\newblock \emph{Journal de Math\'ematiques Pures et Appliqu\'ees},
74 (1995), 483--548.

\bibitem{Pares2006}
C.~Par\'es,
\newblock Numerical methods for nonconservative hyperbolic systems: a
theoretical framework,
\newblock \emph{SIAM Journal on Numerical Analysis}, 44 (2006), 300--321.

\bibitem{Rusanov1962}
V.~V.~Rusanov,
\newblock The calculation of the interaction of non-stationary shock waves
and obstacles,
\newblock \emph{USSR Computational Mathematics and Mathematical Physics},
1 (1962), 304--320.

\bibitem{TrangensteinColella1991}
J.~A.~Trangenstein and P.~Colella,
\newblock A higher-order Godunov method for modeling finite deformation in
elastic--plastic solids,
\newblock \emph{Communications on Pure and Applied Mathematics}, 44 (1991),
41--100.

\bibitem{MillerColella2001}
G.~H.~Miller and P.~Colella,
\newblock A high-order Eulerian Godunov method for elastic--plastic flow in
solids,
\newblock \emph{Journal of Computational Physics}, 167 (2001), 131--176.

\bibitem{DiGiusto2024}
T.~B.~J.~Di~Giusto, C.~H.~Lee, A.~J.~Gil, J.~Bonet, and M.~Giacomini,
\newblock A first-order hyperbolic arbitrary Lagrangian Eulerian conservation
formulation for nonlinear solid dynamics,
\newblock \emph{International Journal for Numerical Methods in Engineering},
125 (2024), e7467.

\bibitem{HindenlangGassner2020}
F.~J.~Hindenlang and G.~J.~Gassner,
\newblock On the order reduction of entropy stable DGSEM for the
compressible Euler equations,
\newblock in \emph{Spectral and High Order Methods for Partial Differential
Equations ICOSAHOM 2018}, Lecture Notes in Computational Science and
Engineering, vol.~134, Springer, 2020, pp.~21--44.
\newblock \url{https://doi.org/10.1007/978-3-030-39647-3_2}.

\bibitem{Zhang2017}
W.~Zhang, T.~Wang, J.-S.~Bai, P.~Li, Z.-H.~Wan, and D.-J.~Sun,
\newblock The piecewise parabolic method for Riemann problems in nonlinear
elasticity,
\newblock \emph{Scientific Reports}, 7 (2017), 13497.

\bibitem{JiangShu1996}
G.-S.~Jiang and C.-W.~Shu,
\newblock Efficient implementation of weighted ENO schemes,
\newblock \emph{Journal of Computational Physics}, 126 (1996), 202--228.

\bibitem{Borges2008}
R.~Borges, M.~Carmona, B.~Costa, and W.~S.~Don,
\newblock An improved weighted essentially non-oscillatory scheme for
hyperbolic conservation laws,
\newblock \emph{Journal of Computational Physics}, 227 (2008), 3191--3211.

\bibitem{vanLeer1979}
B.~van~Leer,
\newblock Towards the ultimate conservative difference scheme. V. A
second-order sequel to Godunov's method,
\newblock \emph{Journal of Computational Physics}, 32 (1979), 101--136.

\bibitem{Toro2009}
E.~F.~Toro,
\newblock \emph{Riemann Solvers and Numerical Methods for Fluid Dynamics: A
Practical Introduction}, 3rd ed.,
\newblock Springer, Berlin, 2009.

\end{thebibliography}
\end{document}